\documentclass[reqno]{amsart}
\usepackage[margin = 1.3in]{geometry}
\usepackage{amsmath, amssymb, amsthm, fancyhdr, verbatim, graphicx}
\usepackage{enumerate}
\usepackage{caption}
\usepackage[all]{xy}
\usepackage[dvipsnames]{xcolor}
\usepackage{mathrsfs}
\usepackage{tikz-cd}
\usepackage{framed}
\usepackage[hidelinks, pagebackref]{hyperref}
\hypersetup{colorlinks=true, linkcolor=Violet,citecolor=Emerald}
\usepackage[titletoc]{appendix}
\usepackage{bbm}
\usepackage{lipsum}
\usepackage{adjustbox}
\usepackage{mathfyz}
\usepackage{mathzyw}
\usepackage{envi}
\usepackage{stmaryrd}
\usepackage{leftindex}
\usepackage[english]{babel}
\numberwithin{equation}{section}

\makeatletter
\def\th@remark{%
  \thm@headfont{\bfseries}%
  \normalfont
  \thm@preskip \thm@preskip 
  \thm@postskip\thm@preskip
}
\def\imod#1{\allowbreak\mkern5mu({\operator@font mod}\,\,#1)}
\makeatother

\title[Characteristic $2$ Lie Type Divisibility Graph]{Components of the Divisibility Graph of Finite Groups of Lie Type in Defining Characteristic $2$}

\author{Liam May}
\address{Department of Mathematics, Massachusetts Institute of Technology, 77 Massachusetts Avenue, Cambridge, MA 02139, USA}
\email{liammay2@mit.edu}

\author{Yong Yang}
\address{Department of Mathematics, MCS 470, 601 University Drive, San Marcos, TX 78666-4684}
\email{yang@txstate.edu}

\begin{document}

\begin{abstract}
We determine the connected components of the divisibility graph of several families of finite groups of Lie type in defining characteristic $2$, extending the result of \cite{AbdolghafourianIranmaneshNiemeyer2017LieType}, who treated odd defining characteristic. We show that there is a distinguished component containing all nonidentity unipotent class sizes and that every other component is either an isolated vertex or an explicitly described two-vertex component. We also determine the isolated vertices outside the distinguished component. Additionally, we prove in Appendix~\ref{app:frobenius} that the divisibility graph of a Frobenius group has exactly two components.
\end{abstract}

\maketitle
\tableofcontents

\section{Introduction}

Given a finite set $X$ of positive integers, define the divisibility graph $D(X)$ as follows. The vertices are elements of $X$ not equal to $1$, and there is an edge $a\sim b$ if $a\mid b$ or $b\mid a$. We will study $D(X)$ in the case where $X = \cs(G)$ for a finite group $G$, where $\cs(G)$ denotes the set of conjugacy class sizes of $G$, and we let $\cD(G)$ denote this graph.

It is a natural question to ask what the number of connected components, $c(\cD(G))$, of $\cD(G)$ can be, which is stated as Question $7$ in \cite{CaminaCamina2011Survey}. In \cite{AbdolghafourianIranmanesh2015SymAlt}, it is shown that $c(\cD(S_n))\le 2$, and $c(\cD(A_n))\le 3$, and in the disconnected case all but one component is an isolated vertex. Similarly, $c(\cD(G))$ is studied for simple Zassenhaus groups and sporadic simple groups in \cite{AbdolghafourianIranmanesh2014Zassenhaus}, the groups $\PSL(2,q)$ and $\operatorname{Sz}(q)$ in \cite{AbdolghafourianIranmanesh2016CertainSimple}, and more generally, finite groups of Lie type in ``good'' defining characteristic in \cite{AbdolghafourianIranmaneshNiemeyer2017LieType}. The authors of \cite{khoshnevis_mostaghim_2022_fgroups} study $F$-groups. In this paper, we consider finite groups of Lie type in defining characteristic $2$; these were not considered in \cite{AbdolghafourianIranmaneshNiemeyer2017LieType}. In Appendix~\ref{app:frobenius}, we also present a short proof of the fact that $c(\cD(G)) = 2$ when $G$ is a Frobenius group, addressing another apparent gap in the literature.

The authors of \cite{AbdolghafourianIranmaneshNiemeyer2017LieType} study the divisibility graph of several families of finite groups of Lie type, however their discussion entirely excludes those of even characteristic. With the exception of $\PSL_2(q)$ and $\operatorname{Sz}(q)$, whose divisibility graphs are given in \cite{AbdolghafourianIranmanesh2016CertainSimple}, nothing is published on the divisibility graphs of Lie-type groups in even characteristic.

\medskip

As observed in \cite{AbdolghafourianIranmaneshNiemeyer2017LieType}, for $x,y\in G$ with $|x^G|\ne |y^G|$, there is an edge between the corresponding vertices of $D(\cs(G))$ if and only if $|C_G(x)|\mid |C_G(y)|$ or $|C_G(y)|~\big\vert~|C_G(x)|$. So we can replace the set of nontrivial conjugacy class sizes of $G$ with the set $\mathcal C(G)$ of centralizer sizes of noncentral elements of $G$. We write $x\sim y$ if $|x^G|$ and $|y^G|$ lie in the same component of $D(\cs(G))$, and we may also view this as an equivalence relation on the set $\cC(G)$.

The following is Lemma 4 of \cite{AbdolghafourianIranmaneshNiemeyer2017LieType}.

\begin{lemma}\label{lemma:4-AIN}
    Let $G$ be a finite group of Lie type and $x,y \in G$ noncentral elements.
    \begin{itemize}
        \item If $\gcd(|x|, |y|) = 1$ and $xy = yx$, then $C_G(xy) = C_G(x) \cap C_G(y)$, thus $x\sim xy\sim y$.
        \item $x\sim x^m$ for any $m > 1$ such that $x^m \notin Z(G)$.
        \item If $\gcd(|xZ(G)|, |yZ(G)|) = 1$ and $xyZ(G) = yxZ(G)$, then $x\sim y$.
    \end{itemize}
\end{lemma}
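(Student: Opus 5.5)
The argument is elementary group theory and does not use the Lie-type structure of $G$. Throughout, I use the reformulation recalled before the lemma: for noncentral $u,v$ with $|u^G|\ne|v^G|$, the vertices are adjacent exactly when one centralizer order divides the other. If $|u^G|=|v^G|$, the two elements give the same vertex, so $u\sim v$ holds trivially. Hence, to show $u\sim v$ it suffices to prove $C_G(u)\subseteq C_G(v)$: then $|C_G(u)|$ divides $|C_G(v)|$, so the class sizes are either equal or adjacent.

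\textbf{First item.} Let $x$ and $y$ commute, with $m=|x|$ and $n=|y|$ coprime. Choose integers $a,b$ with $am+bn=1$. Then
\[
(xy)^{bn}=x^{bn}y^{bn}=x^{1-am}=x,
\qquad
(xy)^{am}=y^{am}=y .
\]
So $x$ and $y$ are powers of $xy$, giving $C_G(xy)\subseteq C_G(x)\cap C_G(y)$. The reverse inclusion is clear, since anything centralizing both $x$ and $y$ centralizes $xy$. Thus $C_G(xy)\subseteq C_G(x)$ and $C_G(xy)\subseteq C_G(y)$. The element $xy$ is noncentral: if it were central, then $x$, being a power of $xy$, would be central, contrary to hypothesis. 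By the opening observation we get $x\sim xy$ and $xy\sim y$.

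\textbf{Second item.} Any element centralizing $x$ centralizes $x^m$, so $C_G(x)\subseteq C_G(x^m)$. Since $x^m$ is assumed noncentral, $x\sim x^m$ follows from the opening observation.

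\textbf{Third item.} This is the step needing care, because $x$ and $y$ need not commute in $G$. Set $Z=Z(G)$, $m=|xZ|$ and $n=|yZ|$, and choose $a,b$ with $am+bn=1$. Put $x'=x^{bn}$ and $y'=y^{am}$.
\begin{itemize}
\item Since $x^m\in Z$, we have $x^{am}\in Z$, so $x'=x^{1-am}\equiv x \pmod Z$. Hence $x'Z=xZ\ne Z$, and $x'$ is noncentral. By the second item (or trivially if $x'=x$), $x\sim x'$. Symmetrically $y'\equiv y\pmod Z$, $y'$ is noncentral, and $y\sim y'$.
\item Since $x^{bn}=x\cdot x^{-am}$ with $x^{-am}\in Z$, and $y^{am}=y\cdot y^{-bn}$ with $y^{-bn}\in Z$, conjugation by $x'$ equals conjugation by $x$ on $G$, and likewise for $y'$ and $y$. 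So $C_G(x')=C_G(x)$ and $C_G(y')=C_G(y)$.
\end{itemize}
The hypothesis $xyZ=yxZ$ gives $[x,y]\in Z$, and we must obtain genuinely commuting elements with coprime orders. Write $[x,y]=z$. Since $z$ is central, the map $g\mapsto[g,y]$ is a homomorphism on $\langle x\rangle$, so $[x^k,y]=z^k$; similarly $[x,y^k]=z^k$.

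\textbf{Main obstacle.} I do not yet see how to force the commutator to vanish. I will try to replace $x,y$ by suitable powers $x^{s}$ and $y^{t}$ with $s\equiv 1\pmod m$ and $t\equiv 1\pmod n$, so that the cosets and the centralizers are unchanged. The hope is that then $[x^s,y^t]=z^{st}=1$ and the orders $|x^s|$, $|y^t|$ are coprime, after which the first item gives $x^s\sim y^t$, and so $x\sim y$.

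Making $z^{st}=1$ is the hard part; here I would use $|z|$ together with $\gcd(m,n)=1$. We have $[x^m,y]=z^m$ with $x^m$ central, so $z^m=1$, and similarly $z^n=1$. Since $\gcd(m,n)=1$, this forces $z=1$. So $x$ and $y$ commute outright, and the obstacle is only apparent.

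With commutation established, I take $x''=x^{bn}$ and $y''=y^{am}$; they still commute. For coprimality of orders, note that $(x'')^m=(x^m)^{bn}\in Z$. If the central parts of $x''$ and $y''$ share a prime, I would adjust by powering away the common prime parts. The cleaner route, though, is to argue directly that $C_G(x''y'')=C_G(x)\cap C_G(y)$. Because $x$ and $y$ commute, $C_G(xy)\supseteq C_G(x)\cap C_G(y)$, so it suffices to show that $x$ is congruent modulo $Z$ to a power of $xy$. Working in $G/Z$, we have $(xyZ)^{bn}=xZ$ (using $y^nZ=Z$ and $x^{am}Z=Z$), and similarly $(xyZ)^{am}=yZ$. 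Conjugation by an element depends only on its coset mod $Z$, so $C_G(xy)\subseteq C_G(x)\cap C_G(y)$, giving equality. Finally, $xyZ\ne Z$, since $(xyZ)^{bn}=xZ\ne Z$. So $xy$ is noncentral, and the opening observation gives $x\sim xy\sim y$.
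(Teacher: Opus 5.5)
The paper does not prove this statement; it quotes it as Lemma~4 of \cite{AbdolghafourianIranmaneshNiemeyer2017LieType}, so there is no argument in the paper to compare yours against. Your proof is correct and complete. It is also purely group-theoretic, so it holds for every finite group $G$, not only groups of Lie type.

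Your reduction is sound: if $u,v$ are noncentral and $C_G(u)\subseteq C_G(v)$, then $u\sim v$. Items one and two follow from it exactly as you write. In the third item, the decisive step is the one you eventually reach. Put $z=[x,y]\in Z$. Then $z^m=[x^m,y]=1$ and $z^n=[x,y^n]=1$, and since $\gcd(m,n)=1$ this forces $z=1$. So $x$ and $y$ genuinely commute in $G$.

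From there, $(xy)^{bn}\in xZ$ and $(xy)^{am}\in yZ$. Since conjugation by an element depends only on its coset modulo $Z$, this gives $C_G(xy)=C_G(x)\cap C_G(y)$, and $xy\notin Z$. This is your first-item argument carried out modulo $Z$.

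In a written version, drop the exploratory material, because the final argument never uses it:
\begin{itemize}
\item the preliminary elements $x'=x^{bn}$, $y'=y^{am}$ and the facts you derive about them;
\item the ``main obstacle'' paragraph;
\item the remark about ``powering away common prime parts''.
\end{itemize}
Instead, present the commutator computation first, followed by the two coset identities and the centralizer equality.
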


Due to the Jordan decomposition $g = su = us$ of a noncentral element $g\in G$, and Lemma~\ref{lemma:4-AIN}, either $s\notin Z(G)$, in which case $g\sim s$, or $u$ is nontrivial, in which case $g\sim u$. Hence, much of the focus of this paper, following the structure of \cite{AbdolghafourianIranmaneshNiemeyer2017LieType}, is on unipotent and semisimple elements of $G$.

\subsection{Outline of Paper}
In Section~\ref{sec:background}, we recall the basic definitions of linear algebraic groups and finite groups of Lie type and establish the scope of the paper. We prove that semisimple elements of $G$ have odd order, that unipotent elements are exactly those of $2$-power order, and that the center has odd order. In Section~\ref{sec:unip}, we show that all nonidentity unipotent elements have class sizes lying in a single component, called the \emph{unipotent component} $\cU$. In Section~\ref{sec:4}, we show that noncentral nonregular semisimple elements and noncentral semisimple elements with even centralizer order lie in the unipotent component, and establish a further technical lemma. Sections~\ref{sec:5} and~\ref{sec:6} handle the remaining cases. The main theorems are proved in Section~\ref{sec:7}.

\section{Background and Scope} \label{sec:background}

We now recall the basic terminology of linear algebraic groups and finite groups of Lie type, taken from \cite{MalleTesterman2011}, and fix the notation used throughout. In this paper, $k$ denotes $\overline \F_2$, $q$ is a fixed positive power of $2$, and we take all varieties to be over $k$. We do not take the convention that a variety must be irreducible. We say that an affine variety $\sG$ over $k$ is a \emph{linear algebraic group} if $\sG$ is equipped with a group law such that multiplication $\sG\times \sG \to \sG$ and inversion $\sG \to \sG$ are morphisms of varieties. The linear algebraic group $\sG$ is called \emph{connected} if $\sG$ is connected as a variety, which is true if and only if $\sG$ is irreducible as a variety. The component of $\sG$ containing the identity is denoted $\sG^\circ$. 

The general linear groups, $\GL(V)$, are indeed linear algebraic groups, and every linear algebraic group $\sG$ can be viewed as a closed subgroup of a general linear group, that is, there exists a faithful rational representation $\rho:\sG \hookrightarrow \GL(V)$ for some finite dimensional vector space $V$ over $k$. An element $g\in \GL(V)$ is said to be \emph{unipotent} if $g - 1$ is nilpotent, and \emph{semisimple} if $g$ is diagonalizable. Moreover, for all $g\in \GL(V)$, there exists a unique \emph{Jordan decomposition}, $g = g_u g_s = g_s g_u$, where $g_u$ is unipotent and $g_s$ is semisimple, the two factors commute, and are called the \emph{unipotent} and \emph{semisimple} components, respectively. For an element $x$ of $\sG$, there exists a unique decomposition $x = x_u x_s = x_s x_u$, where $\rho(x_u)$ is unipotent and $\rho(x_s)$ is semisimple, for a faithful rational representation $\rho$, which is also called the \emph{Jordan decomposition}, and moreover is independent of the choice of representation. An element $x$ is said to be \emph{unipotent} if $x = x_u$, and similarly \emph{semisimple} if $x = x_s$. A group is said to be \emph{unipotent} if all elements are unipotent.

The \emph{radical} of $\sG$, denoted $R(\sG)$, is the largest closed, connected, normal, solvable subgroup of $\sG$. The \emph{unipotent radical} of $\sG$, denoted $R_u(\sG)$, is the largest closed, connected, normal, unipotent subgroup of $\sG$. A connected linear algebraic group $\sG$ is called \emph{reductive} if $R_u(\sG)=1$, and it is called \emph{semisimple} if $R(\sG)=1$. Since $R_u(\sG)\le R(\sG)$, every semisimple group is reductive.

The multiplicative group of $k$, denoted $\sG_m$, can easily be seen to be a linear algebraic group. A linear algebraic group $\sT$ is called a \emph{torus} if it is isomorphic to a product of copies of $\sG_m$. In this paper, assume that $\sG$ is connected and semisimple, and that $\sT\le \sG$ is a maximal torus, unless specified otherwise. A \emph{Borel subgroup} of $\sG$ is a maximal closed, connected, solvable subgroup of $\sG$. Often, we will consider $\sT\le \sB\le \sG$, where $\sT$ is a maximal torus, $\sB$ is a Borel subgroup, and both are $F$-stable for some Steinberg endomorphism $F$ on $\sG$. A \emph{character} of $\sT$ is a morphism $\chi:\sT\to \sG_m$, and the set of characters is denoted $X=X(\sT)$. The \emph{Lie algebra} of $\sG$ will be denoted $\frg=\Lie(\sG)$, which we identify with the tangent space $T_1\sG$. Given $x\in\sG$, the map $y\mapsto xyx^{-1}$ gives rise to a map $\Ad x:\frg\to\frg$, which defines the \emph{adjoint representation} $\Ad:\sG\to\GL(\frg)$. The image $\Ad(\sT)$ consists of commuting semisimple elements and therefore can be simultaneously diagonalized. Thus we write $\frg=\bigoplus_{\chi\in X(\sT)}\frg_\chi$, where $\frg_\chi=\{v\in\frg\mid \Ad(t)(v)=\chi(t)v\text{ for all }t\in\sT\}$. The set of nontrivial characters $\chi$ such that $\frg_\chi\ne0$ is denoted $\Phi=\Phi(\sG,\sT)$ and is called the set of \emph{roots} of $\sG$ with respect to $\sT$. The \emph{Weyl group} with respect to $\sT$ is $W=N_{\sG}(\sT)/C_{\sG}(\sT)$, and it has a well-defined action on the set of roots. 

We also consider the \emph{cocharacters} of $\sT$, which are morphisms $\gamma:\sG_m\to\sT$, and denote the cocharacter lattice by $Y=Y(\sT)=\Hom(\sG_m,\sT)$. A \emph{Dynkin diagram} is a graph that encodes the geometric data of the roots of $\sG$. Each root $\alpha \in \Phi(\sT)$ corresponds to a \emph{reflection} $s_\alpha$ of the vector space $X(\sT) \otimes_\Z \R$, which gives rise to a \emph{coroot} $\alpha^\vee$. The set of coroots is denoted $\Phi^\vee = \Phi(\sT)^\vee$, and the data $(X, \Phi, Y, \Phi^\vee)$ is called a root datum. A theorem of Chevalley states that two semisimple linear algebraic groups are isomorphic if and only if they have isomorphic root data. The possible Dynkin diagrams which may occur are labeled $A_n, B_n, C_n, D_n, G_2, F_4, E_6, E_7, E_8$. Groups corresponding to the first four families are said to be of \emph{classical type}, and the rest of \emph{exceptional type}. 

We can study which root data arise with a given root system $\Phi$. Let $\Omega=\Hom(\Z\Phi^\vee,\Z)$, and let $\Lambda(\Phi)=\Omega/\Z\Phi$, the fundamental group of the root system. If $(X,\Phi,Y,\Phi^\vee)$ is a root datum, then $\Z\Phi\le X$ and $\Z\Phi^\vee\le Y$ are finite-index subgroups. The latter inclusion induces an inclusion $X\cong\Hom(Y,\Z)\hookrightarrow\Omega$. Thus the possible character lattices satisfy $\Z\Phi\le X\le\Omega$ and correspond to subgroups of $\Lambda(\Phi)$. The algebraic fundamental group of $\sG$ is $\Lambda(\sG)=\Omega/X$.

Now we pass from linear algebraic groups to finite groups of Lie type. Given an affine variety $\sX$ over $k$, the Frobenius map on $k$ defines a map $\sX\to \sX$, also called the Frobenius map. Given $\sG$, a connected semisimple linear algebraic group, we say that a map $F:\sG \to \sG$ is a \emph{Steinberg endomorphism} if $F^m$ is a Frobenius map with respect to some $\F_q$ structure, where $q = 2^n$, for some $m\ge 1$. Then a \emph{finite group of Lie type} is formed as $G = \sG^F$, the fixed points of $F$ in $\sG$. The \emph{type} of a finite group $G=\sG^F$ of Lie type is written ${}^\delta R(q)$, where $R$ is the Lie type of $\sG$, $q$ is the prime power over which $G$ is defined, and $\delta$ is the order of the automorphism of the Dynkin diagram of $\sG$ induced by $F$. The main theorems consider exactly the set of groups considered in \cite{AbdolghafourianIranmaneshNiemeyer2017LieType}, with $q$ a power of $2$.

We now state the groups that the main theorems consider. The group $G$ denotes a finite group of the form $G = \sG^F$, where $\sG$ is a connected semisimple linear algebraic group over $k$, and $F:\sG \to \sG$ is a Steinberg endomorphism. We consider groups of classical types $A_\ell$ and ${}^2 A_\ell$ for $\ell \ge 3$, $C_\ell$ for $\ell \ge 3$, and $D_\ell$ and ${}^2D_\ell$ for $\ell\ge 3$. We also consider the exceptional types $G_2$, $F_4$, $E_6$, ${}^2 E_6$, $E_7$, $E_8$, ${}^3 D_4$, ${}^2 B_2$, and ${}^2 F_4$. All groups are defined in characteristic $2$. Note that for $q$ even, there is a standard identification $\Omega_{2\ell + 1} (q) \cong \Sp_{2\ell}(q)$, hence considering the families $B_\ell$ separately from $C_\ell$ would be superfluous. There are the following further low-rank coincidences: $A_1  =C_1$, $B_2 = C_2$, $D_3 = A_3$, and ${}^2 D_3 = {}^2 A_3$.

In Section~\ref{sec:7}, we prove that the graph $\cD(G)$ has one distinguished connected component containing all nontrivial unipotent class sizes, and that every other component is either an isolated vertex or one of the explicit two-vertex components arising from irreducible tori. We also classify the isolated vertices outside the distinguished component.

The following two lemmas give useful tools relating important element properties with element order. Lemma~\ref{lemma:2-1} allows us to identify unipotent elements exactly as those with order a power of $2$, and notes that semisimple elements have odd order. Unipotent elements play an important role in the proof of the main theorem, as the distinguished component of $\cD(G)$ will be built around unipotent elements. Moreover, Lemma~\ref{lemma:2-2} shows that the noncentral unipotent elements are the same as the nontrivial unipotent elements.

\begin{lemma}\label{lemma:2-1}
    Let $x\in \sG$ have finite order. If $x$ is semisimple, then its order is odd. Furthermore, $x$ is unipotent if and only if its order is a power of $2$.
\end{lemma}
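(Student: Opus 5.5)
Fix a faithful rational representation $\rho:\sG\hookrightarrow \GL(V)$ with $V$ a finite-dimensional vector space over $k=\overline{\F}_2$. Being semisimple or unipotent is detected by $\rho$, and $\rho$ preserves element orders, so I will work entirely with the matrix $g=\rho(x)\in\GL(V)$ of finite order $n$. I will then argue in three steps.

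\emph{Semisimple elements.} If $x$ is semisimple, then $g$ is diagonalizable, say $g\sim\operatorname{diag}(\lambda_1,\dots,\lambda_d)$ with $\lambda_i\in k^\times$, and $n=\operatorname{lcm}$ of the orders of the $\lambda_i$ in $k^\times$. Every nonzero element of $k$ lies in some finite field $\F_{2^m}$, so its multiplicative order divides $2^m-1$, which is odd. Hence $n$ is odd.

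\emph{Unipotent elements.} If $x$ is unipotent, write $g=1+N$ with $N$ nilpotent, say $N^{d}=0$. Choose $r$ with $2^r\ge d$. In characteristic $2$ the Frobenius identity $(1+N)^{2^r}=1+N^{2^r}=1$ holds, because $1$ and $N$ commute. So the order of $g$ divides $2^r$, and is a power of $2$.

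\emph{Converse.} Suppose $n$ is a power of $2$. Take the Jordan decomposition $x=x_sx_u$, with commuting factors. Then $x^n=x_s^n x_u^n$, and by uniqueness of the Jordan decomposition the semisimple part of $x^n=1$ is $x_s^n$, so $x_s^n=1$. Thus $x_s$ has finite order dividing $n$, a power of $2$. By the semisimple step, this order is also odd, so it equals $1$. Hence $x=x_u$ is unipotent. Alternatively, directly: $g^{2^r}=1$ gives $(g-1)^{2^r}=g^{2^r}-1=0$, so $g-1$ is nilpotent.

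I expect no real obstacle. The one point needing care is the uniqueness/compatibility of the Jordan decomposition with powers, and the direct computation $(g-1)^{2^r}=0$ avoids it entirely.
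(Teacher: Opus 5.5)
Your proof is correct. The unipotent direction and your direct form of the converse, $(g-1)^{2^r}=g^{2^r}-1=0$, are exactly the paper's argument. The only real difference is in the semisimple step.

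The paper argues by contradiction. If $|x|$ were even, the power $x'=x^{|x|/2}$ would be a semisimple element of order $2$. Then $\rho(x')^2=1$ gives $(\rho(x')-1)^2=0$ in characteristic $2$, so the diagonalizable matrix $\rho(x')$ has all eigenvalues $1$ and must be the identity. You instead diagonalize and use that every element of $\overline{\mathbb{F}}_2^\times$ lies in some $\mathbb{F}_{2^m}^\times$, so its order divides the odd number $2^m-1$.

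The paper's version runs the whole lemma on the single characteristic-$2$ identity $(a+b)^{2^r}=a^{2^r}+b^{2^r}$ for commuting $a,b$. It tacitly uses that a power of a semisimple element is again semisimple. Yours relies on the arithmetic of $k^\times$ instead, and it shows directly that semisimple elements have finite odd order with no finiteness hypothesis. That extra generality costs nothing, since over $\overline{\mathbb{F}}_2$ every element of $\mathrm{GL}(V)$ has finite order anyway.

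Your Jordan-decomposition version of the converse is also valid, because powers of the commuting factors $x_s$ and $x_u$ remain semisimple and unipotent, so uniqueness forces $x_s^n=1$. It is just a detour compared with the one-line computation you also give.
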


\begin{proof}
    By \cite[\S 5.2]{MalleTesterman2011}, there exists a faithful rational representation $\rho : \sG \to \GL(V)$ of $\sG$. Let $x\in \sG$ be semisimple and suppose its order $|x|$ is even. Then $x' := x^{|x| / 2}$ has order $2$ and is semisimple. The image $\rho(x')$ satisfies $\rho(x')^2 - 1 = (\rho(x') - 1)^2 = 0$, hence has all eigenvalues $1$. Since $\rho(x')$ is semisimple, $\rho(x') = 1$, contradicting $\rho$ being faithful. Therefore $|x|$ is odd.

    Suppose $x\in \sG$ is unipotent. Then $\rho(x) = I+N$ with $N$ nilpotent. For sufficiently large $r$, $N^{2^r} = 0$, thus $(I+N)^{2^r} = I + N^{2^r} = I$, so the order of $I+N$ divides $2^r$. Therefore $|x| \mid 2^r$. Conversely, suppose $x$, and therefore $\rho(x)$, has order $2^r$. Then $(I + \rho(x) - I)^{2^r} = I + (\rho(x) - I)^{2^r} = I$, so $x$ is unipotent, as $\rho(x) - I$ is nilpotent. 
\end{proof}

\begin{lemma}\label{lemma:2-2}
    Let $Z = Z(G)$ denote the center of $G$. Then $Z$ has odd order, and every nonidentity unipotent element of $G$ is noncentral. 
\end{lemma}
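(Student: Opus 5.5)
The plan is to show that every central element of $G$ is semisimple. Lemma~\ref{lemma:2-1} then gives both claims.

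\textbf{Step 1: central elements are semisimple.} The center of a connected reductive group consists of semisimple elements; for the connected semisimple group $\sG$, $Z(\sG)$ is finite and contained in every maximal torus. Since $G=\sG^F$, we have $Z(G)\supseteq Z(\sG)^F$. The inclusion can be proper, so we cannot simply conclude $Z(G)=Z(\sG)^F$. Instead we argue directly. Let $z\in Z(G)$ with Jordan decomposition $z=z_sz_u$. Because $F$ is an endomorphism of algebraic groups, it preserves Jordan decompositions, so $z_s,z_u\in G$. Moreover, $z_u$ is a power of $z$, since $z$ has finite order and both components are powers of $z$. Hence $z_u\in Z(G)$. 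It therefore suffices to show that a nontrivial unipotent element $u\in G$ is not central.

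\textbf{Step 2: a nontrivial unipotent element is not central.} Take $u\neq 1$ unipotent in $G$. Then $u$ lies in an $F$-stable Borel subgroup $\sB$ up to $G$-conjugacy, and in fact in $U=R_u(\sB)^F$, which is a Sylow $2$-subgroup of $G$. Suppose $u$ is central in $G$. Then $u$ lies in every Sylow $2$-subgroup, in particular in $U\cap U^-$, where $U^-$ is the unipotent radical of the opposite $F$-stable Borel subgroup, which is also $G$-conjugate to $U$ (conjugate by the longest Weyl element, which has an $F$-fixed representative). But $R_u(\sB)\cap R_u(\sB^-)=1$, so $u=1$, a contradiction.

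\textbf{Step 3: odd order of the center.} By Steps 1 and 2, every $z\in Z(G)$ is semisimple, so by Lemma~\ref{lemma:2-1} it has odd order. A finite group in which every element has odd order has odd order by Cauchy's theorem, so $|Z(G)|$ is odd. Equivalently, any nonidentity unipotent element has $2$-power order greater than $1$, so it cannot lie in $Z(G)$, which gives the second claim directly.

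The main obstacle is Step 2, namely justifying the existence of an $F$-stable opposite pair $\sB,\sB^-$ with an $F$-stable common maximal torus $\sT$, and showing that $U^-$ is $G$-conjugate to $U$. I would cite Lang--Steinberg from \cite{MalleTesterman2011}: $F$-stable pairs $\sT\le\sB$ exist, and the longest element $w_0$ is $F$-stable, so by Lang's theorem it has a representative in $N_{\sG}(\sT)^F$. Conjugating by this representative sends $\sB$ to $\sB^-$ and $U$ to $U^-$.
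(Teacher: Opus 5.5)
Your argument is correct, but it takes a different route from the paper. The paper works in the algebraic group. $Z(\sG)$ is finite and contained in every maximal torus, so it consists of semisimple, hence odd-order, elements. The cited equality $Z(\sG^F)=Z(\sG)^F$ then places $Z(G)$ inside $Z(\sG)$. The unipotent claim follows because a nonidentity unipotent element has nontrivial $2$-power order.

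You never compare $Z(G)$ with $Z(\sG)$. You reduce to showing that no nonidentity unipotent element is central, using $z_u\in\langle z\rangle$. You then get this from the fact that a central $2$-element lies in every Sylow $2$-subgroup, in particular in $R_u(\sB)^F\cap R_u(\sB^-)^F=1$.

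What each route buys: your version replaces the center-comparison theorem with the Sylow property of $R_u(\sB)^F$. The paper's version yields the stronger inclusion $Z(G)\le Z(\sG)$, which it needs again in Lemma~\ref{lemma:5-2}, so the citation costs it nothing extra.

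Two small remarks.
\begin{itemize}
\item Your ``main obstacle'' is easier than you suggest. $F(\sB^-)$ is a Borel subgroup containing $F(\sT)=\sT$ and meeting $F(\sB)=\sB$ in $\sT$, so $F(\sB^-)=\sB^-$ by uniqueness of the opposite Borel subgroup. Then $R_u(\sB^-)^F$ is a Sylow $2$-subgroup directly, with no need for an $F$-fixed representative of $w_0$.
\item Your aside that the inclusion $Z(\sG)^F\le Z(G)$ can be proper is mistaken. For connected reductive $\sG$ equality holds, and this is exactly the result the paper cites. Nothing in your proof depends on that remark, so the argument is unaffected.
\end{itemize}
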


\begin{proof}
    Since $\sG$ is semisimple, $Z(\sG)$ is finite. Every maximal torus $\sT$ contains $Z(\sG)$, so $Z(\sG)$ consists only of semisimple elements, which by Lemma~\ref{lemma:2-1} have odd order. This demonstrates that $Z(\sG)$ has odd order.

    Since $Z(\sG)^F = Z(\sG^F)$ \cite[24.13]{MalleTesterman2011}, the subgroup $Z(G) = Z(\sG)^F$ of $Z(\sG)$ also has odd order. But unipotent elements have order a power of $2$, so no nontrivial unipotent element lies in $Z(G)$.
\end{proof}

\section{The Unipotent Component}\label{sec:unip}

In this section we will establish the existence of a distinguished \emph{unipotent component}, denoted $\cU$, of the divisibility graph $\cD(G)$, which contains all noncentral unipotent class sizes. 

\begin{prop}[Unipotent Component]\label{prop:unip-component}
    There is a nonidentity unipotent element $u\in G$ such that $|C_{G}(u)|\mid|C_{G}(v)|$ for every nonidentity unipotent $v\in G$. Consequently, the class sizes of all nonidentity unipotent elements of $G$ lie in a single connected component of $\cD(G)$, which we denote $\cU$.
\end{prop}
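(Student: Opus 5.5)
The natural candidate for $u$ is a \emph{regular} unipotent element of $G$, since regular elements have the smallest centralizers. The plan is to show that $|C_G(u)|$ factors as $|Z(G)|$ times a power of $2$, and then that both factors divide $|C_G(v)|$ for every nonidentity unipotent $v$. The ``Consequently'' clause is then immediate. Every $|C_G(v)|$ is a multiple of $|C_G(u)|$, so every nonidentity unipotent vertex $|v^G|$ either equals $|u^G|$ or is adjacent to it, by the observation that divisibility of centralizer orders gives edges. By Lemma~\ref{lemma:2-2} all these vertices are genuine vertices of $\cD(G)$.

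\emph{Step 1: the centralizer of a regular unipotent.} Fix an $F$-stable pair $\sT\le\sB$ with $\sU=R_u(\sB)$, and let $u\in\sU^F$ be regular; such an $F$-stable regular unipotent exists by the standard theory. By Steinberg, $u$ lies in a unique Borel subgroup, namely $\sB$, so $C_{\sG}(u)\le N_{\sG}(\sB)=\sB$ and hence $C_G(u)\le \sB^F=\sU^F\rtimes\sT^F$. The normal Sylow $2$-subgroup of $C_G(u)$ is $C_{\sU^F}(u)$. By Lemma~\ref{lemma:2-1}, a complement consists of semisimple elements of odd order; by Schur--Zassenhaus it is conjugate inside $\sB^F$ into $\sT^F$. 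A torus element centralizing a regular unipotent must act trivially on every simple root subgroup, since $u$ has nonzero components on all simple roots. Such an element is therefore central, so the complement is $Z(G)$. This yields
\[
|C_G(u)| = |Z(G)|\cdot |C_{\sU^F}(u)|,
\]
with $|Z(G)|$ odd by Lemma~\ref{lemma:2-2}.

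\emph{Step 2: divisibility.} For any unipotent $v\ne1$, we have $Z(G)\le C_G(v)$, so $|Z(G)|$ divides $|C_G(v)|$. Since $|C_{\sU^F}(u)|$ is a power of $2$ and $|Z(G)|$ is odd, it remains to show
\[
|C_{\sU^F}(u)|\le |C_G(v)|_2 .
\]
Conjugating $v$ into $\sU^F$ by Sylow's theorem, it suffices to show $|C_{\sU^F}(u)|\le |C_{\sU^F}(v)|$.

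\emph{Step 3: the main obstacle.} I expect this inequality, in bad characteristic $2$, to be the hardest step. In good characteristic $C_{\sG}(u)=Z(\sG)\times C_{\sU}(u)$ with $C_{\sU}(u)$ connected of dimension $\ell$, so $|C_{\sU^F}(u)|=q^\ell$ (with the obvious modification for twisted and Suzuki/Ree types). In characteristic $2$, however, $C_{\sU}(u)$ may be disconnected, with a component group that is a $2$-group; examples are types $C_\ell$, $F_4$, $G_2$ and $E$.

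My approach is first to use the known descriptions of $C_{\sG}(u)$ for regular $u$ (Springer, Lou, Testerman) to get $|C_{\sU^F}(u)|=q^{\ell}\cdot 2^{c}$, where $2^c$ is bounded by the order of the component group. The lower bound for $|C_{\sU^F}(v)|$ would then come from dimension. Every non-regular unipotent $v$ has $\dim C_{\sG}(v)\ge \ell+2$, and $C_{\sU}(v)$ is a connected-by-finite unipotent group. A Lang--Steinberg count then gives
\[
|C_{\sU^F}(v)|\ \ge\ q^{\dim C_{\sU}(v)}\ \ge\ q^{\ell+2} .
\]
This should exceed $q^\ell 2^{c}$ because $2^c\le 4\le q^2$.

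If this uniform comparison fails in a small case, for instance when the component group is larger than expected for small $q$, I would fall back on direct inspection of the known unipotent class tables for the exceptional types and the Wall/Spaltenstein parametrization for the classical types in characteristic $2$. In such a case one could also replace $u$ by whichever unipotent element attains the minimum centralizer order. The existence of such an element is not in doubt; what needs checking is only that its centralizer order divides all the others.
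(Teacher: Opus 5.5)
Your Steps 1 and 2 are sound. Uniqueness of the Borel subgroup containing a regular unipotent element, Schur--Zassenhaus, and the simple-root argument do give $|C_G(u)|=|Z(G)|\cdot|C_{\sU^F}(u)|$. That cleanly reduces the problem to comparing $2$-parts.

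The gap is in Step 3. From $\dim C_{\sG}(v)\ge \ell+2$ you pass to $\dim C_{\sU}(v)\ge \ell+2$, but these are different quantities. For any conjugate of $v$ in $\sU$, $\dim C_{\sU}(v)$ is at most the dimension of a maximal connected unipotent subgroup of $C_{\sG}(v)$. That dimension is $\dim C_{\sG}(v)$ minus the rank and the number of positive roots of the reductive part of $C_{\sG}(v)$.

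This already fails in type $A$. Take $v\in\SL_n(q)$ of Jordan type $(n-1,1)$. Here the reductive part of $C_{\sG}(v)$ is a one-dimensional torus, and the centralizer formula in the paper gives $|C_{\SL_n(q)}(v)|=q^{n}(q-1)$. So $|C_G(v)|_2=q^{\ell+1}$, and no conjugate of $v$ satisfies $|C_{\sU^F}(v)|\ge q^{\ell+2}$. The inequality on which your comparison rests is therefore false as stated.

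Suppose you correct the uniform bound to something like $q^{\ell+1}$, which you would still have to prove. The comparison with $q^{\ell}2^{c}$ then needs $2^{c}\le q$. This fails for $q=2$ in types $F_4$, $E_7$, $E_8$, where the regular unipotent centralizer has order $4q^{\ell}$. So a dimension count alone cannot decide whether some non-regular class has $2$-part below $4q^{\ell}$. You need class-by-class information on unipotent centralizer orders in characteristic $2$, which is exactly what the paper uses:
\begin{itemize}
\item de Franceschi's explicit centralizer orders (with the determinant/multiplier reduction across isogeny types) for the classical families;
\item the Liebeck--Seitz, Spaltenstein and Suzuki tables for the exceptional ones.
\end{itemize}

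Your last fallback, replacing $u$ by an element of minimal centralizer order, does not help. Minimality of $|C_G(u)|$ does not imply that it divides the other centralizer orders, and that divisibility is the whole content of the statement.
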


In the classical cases, Lemma~\ref{lemma:unip-classical} shows that a single Jordan block serves as a unipotent element whose centralizer size divides those of all other noncentral unipotent elements. In the exceptional cases, inspection of the data presented in \cite{liebeckSeitz2012}, \cite{spaltenstein1982}, and \cite{suzuki1962} allows us to conclude that a regular unipotent element provides the desired centralizer order; this is established in Lemma~\ref{lemma:exc-unip-reg-divis-1}. The proposition follows from these two lemmas.

\begin{lemma}\label{lemma:unip-classical}
     Suppose that $G$ belongs to one of the classical families under consideration. Then there exists a nonidentity unipotent element $u\in G$ such that $|C_G(u)|\mid|C_G(v)|$ for all nonidentity unipotent elements $v\in G$. 
\end{lemma}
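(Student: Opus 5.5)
The statement says "a single Jordan block serves", so the natural candidate is the regular unipotent element $u$. I will show that $|C_G(u)|$ divides $|C_G(v)|$ for every nonidentity unipotent $v$.

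The first step is to compute $|C_G(u)|$ for regular unipotent $u$. In characteristic $2$ the centralizer $C_{\sG}(u)$ is $Z(\sG)\times C_{\sU}(u)$, up to a finite component group $A(u)$. Its connected part is unipotent of dimension $\ell$, the rank. Passing to $F$-fixed points, $|C_G(u)| = |Z(G)|\cdot c\cdot q^{\ell}$, where $c$ is a small power of $2$ coming from $F$-stable components of $A(u)$. For instance, $c$ is $1$ for $\SL_n$; for $\Sp_{2\ell}$ and the orthogonal groups in characteristic $2$ it is $2$ or $4$. Explicitly, for a single Jordan block in $\GL_n(q)$ the centralizer is $\F_q[x]/(x^n)^\times$, of order $(q-1)q^{n-1}$. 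Intersecting with $\SL_n$ and projecting gives the $A_\ell$ and ${}^2A_\ell$ cases, with $q-1$ replaced by $q+1$ and unitary twisting for ${}^2A_\ell$.

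For the classical groups I will not use $A(u)$ directly. Instead I will work concretely with the natural module $V$ and the classification of unipotent classes by Jordan-block data: partitions for $\GL$/$\GU$, and Hesselink/Liebeck–Seitz labelled partitions for $\Sp_{2\ell}(q)$ and $\Omega^\pm_{2\ell}(q)$. In type $D$ the regular unipotent of $\SO_{2\ell}$ has Jordan blocks $(2\ell-1,1)$, so it is not literally a single block. I will treat this as the "single block" in the appropriate sense, i.e.\ the regular class.

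The second step splits $|C_G(v)|$ into its $2$-part and its $2'$-part and compares each separately with $|C_G(u)|$.

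\textbf{The $2'$-part.} For the regular element, the $2'$-part of $|C_G(u)|$ is essentially $|Z(G)|$, times a factor $\gcd(n,q-1)$-type correction in type $A$. For type $A$ I will check directly that this divides $|C_G(v)|_{2'}$. Since $C_G(v)\supseteq Z(G)$, $|Z(G)|$ always divides. What remains is the $(q-1)$ factor in $\PGL$-type forms, handled through the reductive part of $C_{\GL_n}(v)$, which is $\prod_i \GL_{m_i}(q)$ where $m_i$ is the multiplicity of block size $i$. Each factor has order divisible by $q-1$, so the $2'$-divisibility holds. I expect the general isogeny type, with the center and the index of $G$ in $\GL$, to need careful but routine bookkeeping via $|C_{\GL}(v)|/|C_{\GL}(v)\cap \SL|$. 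The same approach works for $\GU$, with $q+1$ in place of $q-1$.

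\textbf{The $2$-part.} Here I need $|C_G(v)|_2\ge |C_G(u)|_2$, and the main tool is dimension. Every $v$ satisfies $\dim C_{\sG}(v)\ge \ell=\dim C_{\sG}(u)$, and the unipotent radical of $C_{\sG}(v)$ together with a Sylow $2$-subgroup of its reductive part gives $|C_G(v)|_2\ge q^{\dim C_{\sG}(v)-\text{(rank of reductive part)}}\cdots$. A cleaner route is to use the explicit centralizer order formulas (Wall; Liebeck–Seitz, Chapter 7) for $\Sp$ and $\mathrm{O}^\pm$ in characteristic $2$. These have the form $q^{N}\cdot 2^{a}\cdot\prod(\text{orders of }\Sp/\mathrm{O}\text{ factors})$. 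I will show that $N+(\text{$2$-parts of the reductive factors})+a$ is minimized at the regular class. Since both $|C_G(u)|$ and $|C_G(v)|$ have $2$-parts that are powers of $2$, divisibility reduces to this inequality of exponents.

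I expect the main obstacle to be the symplectic and orthogonal cases with $v$ close to regular, e.g.\ Jordan type $(2\ell-2,2)$ or $(2\ell-1,1)$ variants. There the component-group factor $2^{a}$ for $u$ could exceed that for $v$ while $N$ only increases by a little. For these I will first try to show that the increase of at least $2$ in $\dim C_{\sG}(v)$ over $\ell$ (centralizer dimensions of distinct classes have the same parity) contributes at least a factor $q^2\ge4$. This would dominate the at most factor-$4$ component-group discrepancy. If that fails for small $q$, I will check the finitely many borderline labelled partitions by hand.
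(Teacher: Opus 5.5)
Your strategy is sound, and its first half is cleaner than the paper's. Because $C_{\sG}(u)=Z(\sG)\times C_{\sU}(u)$ for regular unipotent $u$, and the unique Borel subgroup containing $u$ is $F$-stable, you get $C_G(u)=Z(G)\times C_{\sU}(u)^F$. Hence $|C_G(u)|_{2'}=|Z(G)|$, which divides $|C_G(v)|$ because $Z(G)\le C_G(v)$. This replaces the paper's isogeny reduction and its factorization $|C_G(v)|=|C_{\SL_n(q)}(v)|\cdot|\det(G)\cap\det(C_{\GL_n(q)}(v))|$, together with the $\mu$-analogues. Your concern about a leftover ``$(q-1)$ factor in $\PGL$-type forms'' is a confusion: $|C_{\PGL_n(q)}(u)|=q^{n-1}$, so nothing remains to check. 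Everything then hinges on $v_2(|C_G(v)|)\ge v_2(|C_G(u)|)$. The paper reads this off the explicit centralizer formulas; in type $A$ it is the bound $\gamma+\sum_i\binom{l_i}{2}\ge n-1$ on the $q$-exponent.

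That inequality is what your proposal leaves unproved, and the mechanism you offer for the hard cases fails, for three reasons.

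First, the $2$-part is not governed by $\dim C_{\sG}(v)$. With $q=2^f$, one has $v_2(|C_G(v)|)=f\bigl(\dim R_u(C_{\sG}(v))+N\bigr)+v_2\bigl(|(C_{\sG}(v)/C_{\sG}(v)^\circ)^F|\bigr)$, where $N$ is the number of positive roots of the reductive part of $C_{\sG}(v)^\circ$. The rank of that reductive part contributes only odd factors. For example, $v$ of Jordan type $(n-1,1)$ in $\SL_n$ has $\dim C_{\sG}(v)=\ell+2$, but $|C_{\SL_n(q)}(v)|_2=q^{\ell+1}$. So ``the dimension rises by $2$, hence a factor $q^2$'' is false, and a single factor $q=2$ cannot absorb the factor-$4$ component-group discrepancy you anticipate.

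Second, your test cases are wrong in characteristic $2$. Since $\mathrm{O}_{2\ell}(k)\le\Sp_{2\ell}(k)$, odd Jordan blocks occur with even multiplicity. So $(2\ell-1,1)$ is not a unipotent class at all, and the regular class of $D_\ell$ has Jordan type $(2\ell-2,2)$; already $\wedge^2J_4$ has type $(4,2)$ in characteristic $2$.

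Third, the ``finitely many borderline labelled partitions'' are not finitely many uniformly in $\ell$.

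What is needed is a uniform comparison, using the Liebeck--Seitz (or de Franceschi) centralizer orders, of the $2$-adic exponent of every non-regular labelled class in $\Sp_{2\ell}(q)$ and $\Omega^\pm_{2\ell}(q)$ against $f\ell+v_2(c)$.
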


\begin{proof}
    First consider the family $A_\ell$. We will show that without loss of generality, we may assume that such a group $G = \sG^F$ satisfies $\SL_n(q) \le G \le \GL_n(q)$. By \cite[Prop 9.15]{MalleTesterman2011}, there are central isogenies\[
        \SL_n(k)\xrightarrow{\alpha}\sG\xrightarrow{\beta}\PGL_n(k).
    \]
    Let $\overline G = \beta(G) \le \PGL_n(q)$, and $K = \ker(\beta|_G)$. Then $\PSL_n(q) \le \overline G \le \PGL_n(q)$. Let $\rho: \GL_n(q) \to \PGL_n(q)$ be the natural central isogeny, and set $\widetilde G = \rho^{-1}(\overline G)$. Then $\SL_n(q) \le \widetilde G \le \GL_n(q)$. For a unipotent element $\overline x \in \PGL_n(q)$, there is a unique representative $\hat x\in \SL_n(q)$. Indeed, two lifts differ by a scalar, and if both are unipotent, then that scalar is $1$. So fix $x\in G$ unipotent, $\overline x = \beta(x)\in \PGL_n(q)$, and $\hat x \in \SL_n(q)$ the lift.     We claim that $1\to K\to C_G(x)\to C_{\overline G}(\overline x)\to 1$ is exact. Only surjectivity requires proof. Let $\overline g\in C_{\overline G}(\overline x)$ and choose $g\in G$ such that $\beta(g)=\overline g$. Since $\overline g$ centralizes $\overline x$, there exists $z\in K$ such that $gxg^{-1}=zx$. The element $z$ is central and semisimple, while $x$ is unipotent. Thus the semisimple and unipotent components of $zx$ are $z$ and $x$, respectively. Since $gxg^{-1}$ is unipotent, uniqueness of the Jordan decomposition gives $z=1$. Hence $g\in C_G(x)$, proving surjectivity.

    The same argument shows that $1\to Z(\GL_n(q))\to C_{\widetilde G}(\hat x)\to C_{\overline G}(\overline x)\to 1$ is exact. Indeed, if $\overline g\in C_{\overline G}(\overline x)$ and $\hat g\in\widetilde G$ is a lift, then $\hat g\hat x\hat g^{-1}=c\hat x$ for some scalar $c$. Since the left-hand side is unipotent, uniqueness of the Jordan decomposition gives $c=1$.

    Consequently, $|C_G(x)|=|K||C_{\overline G}(\overline x)|$ and $|C_{\widetilde G}(\hat x)|=(q-1)|C_{\overline G}(\overline x)|$. Therefore divisibility of the relevant centralizer orders is equivalent in $G$, $\overline G$, and $\widetilde G$, proving the reduction.

    For the family $A_{n-1}$, consider a group $G$ with $\SL_n(q)\le G\le \GL_n(q)$. This is a larger family of groups than the scope of this paper treats. Let $v\in G$ be a nonidentity unipotent element. Thus $v\in \SL_n(q)$, and $v$ has a Jordan decomposition \[
        v\sim \bigoplus_{i = 1}^h J_{\lambda_i}^{\oplus l_i},
    \]
    where $\lambda_1 < \dots < \lambda_h$ and $\sum \lambda_i l_i = n$. From the short exact sequence \[
        1\to C_{\SL_n(q)}(v) \to C_G(v) \to \det(C_G(v)) \to 1,
    \]
    where $\det(C_G(v)) = \det(G) \cap \det(C_{\GL_n(q)}(v))$, we can write \[
        |C_G(v)| = |C_{\SL_n(q)}(v)|\cdot |\det(G) \cap \det(C_{\GL_n(q)}(v))|. \tag{1}\label{tag:1}
    \] 
    
    Now take $u\in G$ to the single Jordan block $u = J_n$. The aim is to show the desired centralizer divisibility, and we will show divisibility of both factors of the right hand side of $(\ref{tag:1})$. From \cite[Thm. 2.3.9]{deFranceschi2020}, \[
        |C_{\GL_n(q)}(u)| = q^{n-1}(q-1),
    \]
    and from \cite[Thm. 2.4.1]{deFranceschi2020}, \[
        |C_{\SL_n(q)}(u)| = \gcd(n, q-1)q^{n-1}.
    \]
    Similarly, the $q$-part of $|C_{\SL_n(q)}(v)|$ is $q^{\gamma + \sum {\tbinom{l_i}{2}}}$, where $\gamma = 2\sum_{i < j}\lambda_i l_il_j + \sum_i (\lambda_i - 1)l_i^2$. Since $\gamma + \sum {\tbinom{l_i}{2}} \ge n - 1$ divisibility of the $q$-part of the first factor of ${(\ref{tag:1})}$ follows. The $q'$-part of the centralizer order in $\GL_n(q)$ is given by \[
        |C_{\GL_n(q)}(v)|_{q'} = (q - 1)^h \prod_{i = 1}^h \prod_{k = 2}^{l_i} (q^k - 1),
    \]
    and thus  \[
        |C_{\SL_n(q)}(v)|_{q'} = d(q - 1)^{h-1} \prod_{i = 1}^h \prod_{k = 2}^{l_i} (q^k - 1),
    \]
    where $d = \gcd(\lambda_1,\dots, \lambda_h, q - 1)$. If $u = v$, divisibility is clear. Otherwise, $h\ge 2$ or $h=1$ and $l_1\ge2$. In either case it is clear that the $q'$-part is divisible by $q - 1$ and thus by $\gcd(n, q-1)$. This shows divisibility of the $q'$-part of the first factor of ${(\ref{tag:1})}$, so divisibility of this factor follows. Now we check divisibility of the second factor of $(\ref{tag:1})$. An element $(A_1,\dots,A_h)$ of $\prod_i\GL_{l_i}$ acts on the $J_{\lambda_i}^{\oplus l_i}$ summand of the Jordan decomposition with determinant $\det(A_i)^{\lambda_i}$. From the description of the $\GL_n(q)$-centralizer in \cite{deFranceschi2020}, noting that the unipotent components contribute determinant $1$, we get\[
        \det \left(C_{\GL_n(q)}(v)\right) = \prod_i(\F_q^\times)^{\lambda_i} = (\F_q^\times)^{g(v)},
    \] where $g(v) = \gcd(\lambda_1,\dots,\lambda_h)$ (note the above expression is not a direct product, but simply a product of groups). Similarly, \[
        \det \left(C_{\GL_n(q)}(u)\right) = (\F_q^\times)^n.
    \] Noticing that $g(v) \mid \sum \lambda_i l_i = n$, and hence \[
        \det (G) \cap (\F_q^\times)^n \le \det (G) \cap (\F_q^\times)^{g(v)},
    \]  we complete the proof in the $A_{n-1}$ case.

    Next we consider the family ${}^2 A_{n-1}$, considering groups $\SU_n(q)\le G\le \GU_n(q)$, where we use the same reduction used in the type $A_{n-1}$ case. The argument is nearly identical, so we omit some of the details. Let $H = \{a\in \F_{q^2}^\times \mid a^{q + 1} = 1\}$. We can again write \[
        |C_G(v)| = |C_{\SU_n(q)}(v)|\cdot|\det (G) \cap \det \left(C_{\GU_n(q)}(v)\right)|.
    \]
    Choosing $u$ and $v$ as before, with the same Jordan decomposition, from \cite[Thm. 5.2.1]{deFranceschi2020}, we get \[
        |C_{\SU_n(q)}(u)| = \gcd(n, q + 1) q^{n-1}
    \]
    and \[
        |C_{\SU_n(q)}(v)|_q = q^{2\gamma + \sum_i \tbinom{l_i}{2}},
    \]
    with $\gamma$ as defined in \cite{deFranceschi2020}, where $2\gamma + \sum {\tbinom{l_i}{2}} \ge n - 1$. For the $q'$-part, \[
        |C_{\SU_n(q)}(v)|_{q'} = d(q + 1)^{h-1}\prod_i \prod_{k = 2}^{l_i} (q^k - (-1)^k).
    \]
    If $h \ge 2$, there is a factor of $q + 1$, and if $h=1$ and $l_1\ge2$, there is a factor of $q^2 - 1 = (q + 1)(q - 1)$. Thus we have shown divisibility of the $\SU_n$ factor of the centralizer order. For the second factor, we find \[
        \det \left(C_{\GU_n(q)}(v)\right) = H^{g(v)},
    \]
    where $g(v) = \gcd(\lambda_1, \dots, \lambda_h)$, and \[
        \det \left(C_{\GU_n(q)}(u)\right) = H^n.
    \]
    Thus $\det (G)\cap H^n \le \det (G) \cap H^{g(v)}$ gives the necessary divisibility.

    The remaining cases of $C_\ell, D_\ell$, and ${}^2D_\ell$, are again identical in structure, using \cite[Thm. 5.2.2, Thm. 5.17, Thm. 5.2.1]{deFranceschi2020}. In type $C_\ell$, let $V$ be equipped with a nondegenerate alternating form $B$, and let $\mu: \GSp(V) \to \F_q^\times$ be defined by $B(gv, gw) = \mu(g)B(v,w)$, so $\ker\mu = \Sp(V)$. In types $D_\ell$ and ${}^2D_\ell$, let $V$ be equipped with the corresponding nondegenerate quadratic form $Q$ and let $\mu:\GO^\pm(V)\to \F_q^\times$ be defined by $Q(gv) = \mu(g)Q(v)$. In these cases, the role played by the determinant is played by $\mu$. Thus there is an identity \[
        |C_G(v)| = |C_\Omega(v)|\cdot|\mu(G)\cap \mu(C_\Delta(v))|,
    \]
    with $\Omega= \Sp(V)$, $\Delta = \GSp(V)$ in type $C_\ell$, $\Omega = \Omega^\pm(V)$, $\Delta = \GO^\pm(V)$ in types $D_\ell$, ${}^2 D_\ell$. Then the unipotent centralizer formulas for $\Omega$, together with $\mu(C_\Delta(u)) \le \mu(C_\Delta(v))$ for $u$ regular unipotent, give $|C_G(u)| \mid |C_G(v)|$.
\end{proof}

Recall that an element $x$ of $\sG$ is said to be \emph{regular} if $\dim C_{\sG}(x) = \rank(\sG)$, where $\rank (\sG)$ is the dimension of $\sT$, a maximal torus of $\sG$.

\begin{lemma}\label{lemma:ex-of-reg-unip-elts}
    The group $G$ contains a regular unipotent element.
\end{lemma}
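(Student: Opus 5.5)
We will produce a regular unipotent element directly from the root subgroup structure attached to an $F$-stable pair. By the Lang--Steinberg theorem (as in \cite{MalleTesterman2011}), $\sG$ contains an $F$-stable Borel subgroup $\sB$ and an $F$-stable maximal torus $\sT\le\sB$. Write $\sB=\sU\rtimes\sT$ with $\sU=R_u(\sB)$; then $\sU$ is $F$-stable as well. Let $\Delta$ be the base of $\Phi^+$ determined by $\sB$, and let $\sU_\alpha$ denote the root subgroups. The classical criterion of Steinberg says the following: an element $u=\prod_{\alpha\in\Phi^+}x_\alpha(c_\alpha)$ of $\sU$, written in any fixed order, is regular exactly when $c_\alpha\neq 0$ for every simple root $\alpha\in\Delta$. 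Equivalently, $u$ is regular if and only if its image in $\sU/[\sU,\sU]\cong\prod_{\alpha\in\Delta}\sU_\alpha$ has every coordinate nonzero. We take this criterion as the main input, together with the fact that the regular unipotent elements of $\sG$ form a single conjugacy class.

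The next step is to describe how $F$ acts on $\sU/[\sU,\sU]$. The endomorphism $F$ stabilizes $\sT$ and $\sB$, so it permutes the root subgroups, sending $\sU_\alpha$ onto $\sU_{\rho(\alpha)}$, where $\rho$ is the permutation of $\Delta$ induced by $F$. Concretely, $F(x_\alpha(c))=x_{\rho(\alpha)}(\lambda_\alpha c^{q_\alpha})$, with nonzero constants $\lambda_\alpha$ and powers $q_\alpha$ of $2$. The quotient $\sA=\sU/[\sU,\sU]$ is then an $F$-stable connected commutative unipotent group. The subset $\sA^{\mathrm{reg}}$ of elements with all simple coordinates nonzero is a nonempty $F$-stable open subset of $\sA$. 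We aim to find $u\in\sU^F$ whose image lies in $\sA^{\mathrm{reg}}$.

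There are two ways to get such a $u$. The first is a direct construction: choose one representative $\alpha$ in each $\rho$-orbit on $\Delta$ and an element $c\in k^\times$ fixed by the appropriate power of $F$. Transporting along the orbit gives an $F$-fixed element of $\prod_{\beta\in\rho\text{-orbit}}\sU_\beta$, taken modulo $[\sU,\sU]$. The second is a uniform argument: $\sA^F$ is finite, and the map $\sU^F\to\sA^F$ is surjective by Lang--Steinberg, because $[\sU,\sU]$ is connected and $F$-stable. It then remains to show that $\sA^{\mathrm{reg}}$ contains an $F$-fixed point. We will argue this through the orbit decomposition $\sA=\prod_{\text{orbits}}\sA_{\mathcal O}$. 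On each orbit factor $\sA_{\mathcal O}$, $F$ acts by cyclically permuting one-dimensional additive groups. Its fixed points are therefore parametrized by a single coordinate $c$ with $F^{|\mathcal O|}$-twisted fixedness, and $c$ can be chosen nonzero, after which every coordinate in the orbit is automatically nonzero. Lifting the resulting point of $\sA^{\mathrm{reg}}$ to $\sU^F$ gives $u\in G$, and $u$ is regular by Steinberg's criterion.

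The main obstacle is the twisted types ${}^2B_2$ and ${}^2F_4$ in characteristic $2$. There $\rho$ interchanges long and short simple roots, and the commutator relations degenerate, so $\sU/[\sU,\sU]$ is not cleanly $\prod_{\alpha\in\Delta}\sU_\alpha$ with the expected $F$-action; in characteristic $2$ the commutator of the two simple root groups of $B_2$ can be trivial. Our first attempt will be to work with the quotient of $\sU$ by the product of the non-simple root subgroups. This is still a normal $F$-stable connected subgroup, and the quotient still identifies with $\prod_{\alpha\in\Delta}\sU_\alpha$ as a variety. If that fails, we fall back on two known facts: for ${}^2B_2(q)$, a regular unipotent element is an element of order $4$, and for ${}^2F_4(q)$ it can be found in the tables of \cite{liebeckSeitz2012}, checking regularity via centralizer dimension. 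A further alternative is the general fact that $\sG^F$ meets every $F$-stable unipotent class, which applies here because the regular class is $F$-stable.
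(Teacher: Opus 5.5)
Your argument is correct, but it follows a different route from the paper. The paper gives no argument and simply cites \cite[Prop.~14.23]{DigneMichel1991}. You reconstruct a proof from two standard inputs:
\begin{itemize}
\item Steinberg's criterion, valid in every characteristic: $u=\prod_{\alpha\in\Phi^+}x_\alpha(c_\alpha)\in\sU$ is regular if and only if $c_\alpha\neq 0$ for all $\alpha\in\Delta$.
\item Lang--Steinberg applied to the connected, normal, $F$-stable subgroup $\sU^*=\prod_{\alpha\in\Phi^+\setminus\Delta}\sU_\alpha$. It lifts an $F$-fixed point of $\sU/\sU^*\cong\prod_{\alpha\in\Delta}\sU_\alpha$ with all coordinates nonzero to an element of $\sU^F$.
\end{itemize}
This is uniform in the type, including ${}^2B_2$ and ${}^2F_4$, and it exhibits regular unipotent elements inside $\sU^F$ for an $F$-stable Borel subgroup. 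The citation buys only brevity. Your last alternative is the shortest complete proof: the regular unipotent elements form a single class, $F$ preserves it because $F$ is a bijective isogeny, and every $F$-stable class of the connected group $\sG$ meets $\sG^F$ by Lang--Steinberg.

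Two corrections to your write-up.

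First, the ``main obstacle'' is not real. Over $k$ in characteristic $2$, with $a$ short and $b$ long, $[x_a(t),x_b(u)]=x_{a+b}(tu)\,x_{2a+b}(t^2u)$, which is not trivial. What vanishes in characteristic $2$ is $[\sU_a,\sU_{a+b}]$. Letting $t,u$ range over $k$ in fact gives $[\sU,\sU]=\sU_{a+b}\sU_{2a+b}$. In any case, working modulo $\sU^*$ rather than $[\sU,\sU]$ sidesteps the question:
\begin{itemize}
\item $\sU^*$ is normal, since $\Phi^+\setminus\Delta$ is closed under adding positive roots.
\item $\sU^*$ is $F$-stable, since $\rho(\Delta)=\Delta$.
\end{itemize}
So the fallbacks to element orders in ${}^2B_2(q)$ and to the tables of \cite{liebeckSeitz2012} are unnecessary.

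Second, on an orbit $\mathcal O$ you should justify that the equation $c=\Lambda c^{Q}$ coming from $F^{|\mathcal O|}$ on $\sU_\alpha$ has a nonzero solution. This requires $Q>1$. That holds because some power of $F$ is a Frobenius map, so its restriction to $\sU_\alpha\cong k^+$ cannot be of the form $c\mapsto\Lambda' c$.
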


\begin{proof}
    This is immediate from \cite[Prop 14.23]{DigneMichel1991}.
\end{proof}

\begin{lemma}\label{lemma:exc-unip-reg-divis-1}
    Suppose that $G$ belongs to one of the exceptional families under consideration. Then there exists a nontrivial unipotent element $x\in G$ such that $|C_G(x)|$ divides the centralizer order of all other nontrivial unipotent elements of $G$.
\end{lemma}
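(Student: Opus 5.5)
The natural candidate is a regular unipotent element $x\in G$, which exists by Lemma~\ref{lemma:ex-of-reg-unip-elts}. The plan is to show that $|C_G(x)|$ divides $|C_G(v)|$ for every nonidentity unipotent $v\in G$, type by type. This is done by comparing against the tables of unipotent classes and centralizer orders in \cite{liebeckSeitz2012} for $G_2,F_4,E_6,{}^2E_6,E_7,E_8,{}^3D_4$, in \cite{spaltenstein1982} for ${}^2F_4$, and in \cite{suzuki1962} for ${}^2B_2$.

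The first step reduces to a convenient isogeny type, as in the proof of Lemma~\ref{lemma:unip-classical}. For $E_6$, ${}^2E_6$ and $E_7$ there is more than one isogeny type; the others are both simply connected and adjoint. A central isogeny $\beta$ restricted to $G$ has kernel $K$ of odd order, by Lemma~\ref{lemma:2-2}. The Jordan decomposition argument already used shows that $|C_G(v)|=|K|\,|C_{\beta(G)}(\beta(v))|$ for every unipotent $v$, since a conjugate of $v$ of the form $zv$ with $z\in K$ forces $z=1$. Divisibility is therefore unchanged, and it suffices to treat the adjoint groups tabulated in \cite{liebeckSeitz2012}.

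The second step uses the structure of regular unipotent centralizers. In good characteristic one has $|C_G(x)|=|Z(G)|q^{\ell}$, with $\ell$ the rank. In characteristic $2$, which is bad for all exceptional types except ${}^2B_2$ (where the Suzuki data applies directly), $C_{\sG}(x)$ may be disconnected. Its component group $A(x)$ is a small $2$-group, typically $\Z/2$ for $G_2,F_4,E_6,E_7$ and $\Z/4$ for $E_8$. The $F$-classes then split into several $G$-classes, each with centralizer order $q^{\ell}$ times a small power of $2$, times $|Z(G)|$.

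I would therefore choose $x$ among the regular classes to minimize the $2$-part. Since there are only finitely many unipotent classes and they are listed uniformly in $q$, the remaining work is a finite check in each type. For every other class, I would verify that its centralizer order has the form $|Z(G)|\,q^{a}\,2^{b}\prod\Phi_d(q)^{c_d}$ with $a\ge \ell$, with $2$-part at least that of $x$, and with the $q'$-part a multiple of $|Z(G)|$. The last condition matters only for $E_6$, ${}^2E_6$ and $E_7$ in the simply connected form, and there it is handled by the isogeny reduction.

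I expect the main obstacle to be this bad-characteristic bookkeeping. The regular class splits, and some subregular or distinguished classes can have centralizer orders of the form $2q^{m}$ or $4q^{m}$ with $m$ close to $\ell$. One must check that the small $2$-power factors never beat the regular one, for example for the classes with labels $F_4(a_1)$ and $E_8(a_1)$ when $q=2$. I would first try the uniform inequality $\dim C_{\sG}(v)\ge \ell+2$ for non-regular $v$. It gives a surplus of at least $q^2\ge 4$ in the $2$-part, which absorbs the factor $|A(x)|\le 4$. Only the cases where this surplus is too small, or where the torus factors of $C_{\sG}(v)^\circ$ are irrelevant to the $2$-part, would need direct inspection of the tables.
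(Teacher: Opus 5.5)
Your route is the paper's: take $x$ regular unipotent and check divisibility against the tables type by type. The paper's check is purely $2$-adic. The regular centralizer orders $2q^2$, $4q^4$, $2q^6$, $4q^7$, $4q^8$, $4q^2$ (for $G_2$, $F_4$, $E_6$ and ${}^2E_6$, $E_7$, $E_8$, ${}^2F_4$) are powers of $2$. Every other class has $2$-adic valuation at least $4f$, $1+6f$, $8f$, $1+9f$, $2+10f$, $1+3f$ respectively, and the short lists for ${}^3D_4$ and ${}^2B_2$ are inspected directly. You rely on that same inspection in the end, but two steps you build around it are wrong as stated.

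\textbf{Isogeny reduction.} Your reduction does not reach the adjoint group. The identity $|C_G(v)|=|K|\,|C_{\beta(G)}(\beta(v))|$ is fine. However, for $E_6$ and ${}^2E_6$ the image $\beta(G)$ has index $\gcd(3,q-1)$, resp.\ $\gcd(3,q+1)$, in $\sG_{\mathrm{ad}}^F$. Centralizer orders in $\beta(G)$ and $\sG_{\mathrm{ad}}^F$ then differ by a class-dependent factor $1$ or $3$: for $E_6(q)$ with $3\mid q-1$ it is $3$ for a root element and $1$ for a regular one. So ``it suffices to treat the adjoint groups'' does not follow.

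The repair, which the paper's $2$-adic comparison implicitly relies on, is that $C_{\sG}(x)=Z(\sG)\times C_{\sU}(x)$ for regular unipotent $x$. Hence $|C_G(x)|=|Z(G)|\cdot 2^b$. Since $Z(G)\le C_G(v)$ for every $v$, your condition on the $q'$-part is automatic. Only $2$-adic valuations then matter, and these are unchanged by odd central quotients and odd-index subgroups. For $E_7$ nothing is needed, since $Z(\sG_{\mathrm{sc}})$ has trivial $k$-points in characteristic $2$.

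\textbf{Dimension heuristic.} It overstates the $2$-part. Let $R$ be the unipotent radical of $C_{\sG}(v)^\circ$ and let the reductive quotient have $N$ positive roots. Then $|C_G(v)|_2$ is $q^{\dim R+N}$ times a component-group contribution, not $q^{\dim C_{\sG}(v)}$. So $\dim C_{\sG}(v)\ge\ell+2$ gives the extra factor $q^2$ automatically only for distinguished $v$. Moreover, for ${}^2B_2$ and ${}^2F_4$ each dimension of a connected unipotent group contributes only a factor $\sqrt q$. In ${}^2F_4(2)$ the surplus is therefore $2<|A(x)|=4$; at $f=1$ the tabulated bound $1+3f$ only just reaches $v_2(4q^2)=2+2f$. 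The table inspection is thus the actual proof, not a fallback.

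\textbf{Minor slips.} $|A(x)|=4$ also for $F_4$ and $E_7$ in characteristic $2$. The prime $2$ is bad for $B_2$ too, which is why ${}^2B_2$ has $2q$. Your sources for ${}^3D_4$ and ${}^2F_4$ are swapped relative to the paper, which takes ${}^3D_4$ from \cite{spaltenstein1982}.
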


\begin{proof}
    By Lemma~\ref{lemma:ex-of-reg-unip-elts}, the group $G$ contains a regular unipotent element. Write $q=2^f$.

    First suppose that $G$ has type $G_2$, $F_4$, $E_6$, ${}^2E_6$, $E_7$, $E_8$, or ${}^2F_4$. Tables~22.2.1--22.2.6 of \cite{liebeckSeitz2012} give the following information. In type $G_2$, a regular unipotent centralizer has order $2q^2$, while every other nonidentity unipotent centralizer has $2$-adic valuation at least $4f$. Since $v_2(2q^2)=1+2f\leq 4f$, the required divisibility follows. In type $F_4$, the corresponding order is $4q^4$, while every other centralizer has $2$-adic valuation at least $1+6f$. Since $v_2(4q^4)=2+4f\leq 1+6f$, the result follows. In types $E_6$ and ${}^2E_6$, the corresponding order is $2q^6$, while every other centralizer has $2$-adic valuation at least $8f$. Since $v_2(2q^6)=1+6f\leq 8f$, the result follows. In type $E_7$, the corresponding order is $4q^7$, while every other centralizer has $2$-adic valuation at least $1+9f$. Since $v_2(4q^7)=2+7f\leq 1+9f$, the result follows. In type $E_8$, the corresponding order is $4q^8$, while every other centralizer has $2$-adic valuation at least $2+10f$. Since $v_2(4q^8)=2+8f\leq 2+10f$, the result follows. Finally, in type ${}^2F_4$, the corresponding order is $4q^2$, while every other centralizer has $2$-adic valuation at least $1+3f$. Since $v_2(4q^2)=2+2f\leq 1+3f$, the result follows.

    Suppose next that $G={}^3D_4(q)$. By \cite[\S 0.5]{spaltenstein1982}, the nonidentity unipotent centralizer orders in characteristic $2$ are $q^{12}(q^6-1)$, $q^{10}(q^2-1)$, $2(q^2+q+1)q^8$, $2(q^2-q+1)q^8$, $q^6$, $2q^4$, and $2q^4$. The final two orders correspond to the two regular unipotent classes. Since $q$ is even, $2q^4$ divides every order in this list.

    Finally, suppose that $G={}^2B_2(q)$. By \cite[\S\S 2,12]{suzuki1962}, the nonidentity unipotent centralizer orders are $q^2$ and $2q$, with the latter afforded by regular unipotent elements. Since $q$ is even, $2q\mid q^2$. The result follows.

\end{proof}

\section{Some Semisimple Elements in the Unipotent Component} \label{sec:4}

We have shown that $\cD(G)$ contains a distinguished component $\cU$, the unipotent component, which contains all nonidentity unipotent class sizes. In this section, we show that several types of semisimple elements also lie in $\cU$. Proposition~\ref{prop:noncent-nonreg-ss--in-U} shows that noncentral nonregular semisimple elements lie in $\cU$. Proposition~\ref{prop:noncent-ss-|C|-even--in-U}, which is an easy consequence of Lemmas~\ref{lemma:2-1} and~\ref{lemma:2-2}, shows that noncentral semisimple elements with even centralizer order lie in $\cU$.

\begin{lemma}\label{lemma:nr-ss--unip}
    Let $s\in G$ be a nonregular semisimple element. Then $C_G(s)$ contains a nontrivial unipotent element.
\end{lemma}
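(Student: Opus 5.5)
Fix $s\in G=\sG^F$ semisimple and not regular. The plan is to find a nontrivial unipotent element inside the finite group $C_{\sG}(s)^F = C_G(s)$. I will pass to the identity component $\sH := C_{\sG}(s)^\circ$ and use that it is $F$-stable: $F(s)=s$, so $F$ preserves $C_{\sG}(s)$ and hence its identity component.

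First, record the structure of $\sH$. By the standard theory of centralizers of semisimple elements in connected reductive groups (e.g.\ \cite[Thm.\ 14.2]{MalleTesterman2011}), $\sH$ is a connected reductive group. It contains a maximal torus $\sT\ni s$ of $\sG$, and its root system is $\Phi_s=\{\alpha\in\Phi : \alpha(s)=1\}$. Consequently $\dim C_{\sG}(s)=\dim \sT+|\Phi_s|$. Since $s$ is not regular, $\dim C_{\sG}(s)>\rank \sG$, which forces $\Phi_s\neq\emptyset$. So $\sH$ is a connected reductive group of positive semisimple rank, and it has nontrivial root subgroups, which are unipotent.

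Next, pass to $F$-fixed points. The restriction $F|_{\sH}$ is a Steinberg endomorphism of $\sH$, so $\sH$ has an $F$-stable Borel subgroup $\sB_{\sH}$ by Lang--Steinberg. Its unipotent radical $\sU_{\sH}=R_u(\sB_{\sH})$ is $F$-stable, connected and unipotent, and $\dim \sU_{\sH}=|\Phi_s|/2>0$. By \cite[Prop.\ 24.?]{MalleTesterman2011} / \cite{DigneMichel1991}, for any connected $F$-stable unipotent group the fixed points have order $q^{\dim}$ (more precisely a power of $p$ equal to $\prod$ over $F$-orbits of root subgroups, which is $>1$). Alternatively, I can argue directly: $\sU_{\sH}$ has a filtration by $F$-stable subgroups whose successive quotients are products of root subgroups permuted by $F$. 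Each such quotient has fixed points isomorphic to the additive group of a finite field $\F_{q^m}$ (or a Suzuki/Ree-type analogue). By Lang--Steinberg, the fixed points lift through the filtration, giving $|\sU_{\sH}^F|=q^{\dim \sU_{\sH}}>1$ in the untwisted-style count, and in every case divisible by $2$. Hence $\sU_{\sH}^F$ is a nontrivial $2$-group inside $C_G(s)$, and its nonidentity elements are unipotent by Lemma~\ref{lemma:2-1}.

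The main obstacle is justifying $|\sU_{\sH}^F|>1$ cleanly in the twisted cases (including ${}^2B_2$, ${}^2F_4$), where $F$ is not a standard Frobenius. I would avoid case analysis by citing the general formula $|\sH^F|=q^{N_{\sH}}\cdot(\text{$p'$-part})$, where $N_{\sH}=|\Phi_s^+|$, with $q^{N_{\sH}}$ interpreted appropriately. This is the order formula for finite reductive groups \cite[Cor.\ 24.6]{MalleTesterman2011}. It immediately gives that the $2$-part of $|\sH^F|$ is nontrivial when $\Phi_s\ne\emptyset$. Then Sylow's theorem yields an element of order $2$ in $\sH^F\le C_G(s)$, which is unipotent and nontrivial by Lemma~\ref{lemma:2-1}. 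This Sylow route is the version I would actually write.
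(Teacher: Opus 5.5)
Your proof is correct and follows essentially the paper's argument. You use that $C_{\sG}(s)^\circ$ is an $F$-stable connected reductive group whose root system $\Phi_s$ is nonempty because $s$ is nonregular, and then that the $F$-fixed points of the unipotent radical of an $F$-stable Borel subgroup form a nontrivial $2$-group. The paper gets this from \cite[Prop.~23.8]{MalleTesterman2011}, giving $|\sU^F|=\prod_{\alpha\in\Phi^+}q_\alpha>1$, and simply picks $u\neq 1$ in $\sU^F$; that is also the precise reference your placeholder ``Prop.~24.?'' should point to. Your preferred variant reads off a nontrivial $2$-part of $|\sH^F|$ from the order formula and then applies Cauchy's theorem with Lemma~\ref{lemma:2-1}. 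That is only a cosmetic detour around the same fact.
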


\begin{proof}
    Write $\sC = C_\sG(s)^\circ$, which is reductive by \cite[Thm. 14.2]{MalleTesterman2011}. Note that $F(C_\sG(s)) = C_\sG(F(s)) = C_\sG(s)$, so $\sC$ is $F$-stable. Moreover, $\sC$ is not a torus. Indeed, by \cite[Prop. 26.6]{MalleTesterman2011}, there exists an $F$-stable maximal torus $\sT$ of $\sG$ containing $s$, with $\sT$ contained in $\sC$ by \cite[Prop. 14.1]{MalleTesterman2011}. If $\sC$ were a torus, then the equality $\sT = \sC$ would follow, and therefore $\dim C_\sG(s) = \dim \sC = \dim \sT = \rank \sG$, contradicting $s$ being nonregular. By \cite[Cor. 21.12]{MalleTesterman2011}, we may choose $\sT \le \sB \le \sC$, with $\sT$ an $F$-stable maximal torus in $\sC$, and $\sB$ an $F$-stable Borel subgroup.

    Let $\Phi$ be the root system of $\sC$ with respect to $\sT$, and let $\Phi^+$ be the positive system determined by $\sB$. Since $\sC$ is not a torus, $\Phi^+ \ne \varnothing$ \cite[Thm. 11.1]{MalleTesterman2011}. Let $\sU = R_u(\sB)$, which is unipotent, $F$-stable, and normalized by $\sT$. Moreover, $\sU^F$ is nontrivial, since, by \cite[Prop. 23.8]{MalleTesterman2011}, \[
        |\sU^F| = \prod_{\alpha \in \Phi^+} q_\alpha > 1,
    \]
    where the $q_\alpha$ are positive powers of $2$. Choose $1\ne u\in \sU^F$, then \[
        u\in \sC^F \le C_\sG(s)^F = C_G(s),
    \]
as claimed.
\end{proof}

\begin{prop}\label{prop:noncent-nonreg-ss--in-U}
    Every noncentral nonregular semisimple element of $G$ has class size lying in the unipotent component $\cU$ of $\cD(G)$.
\end{prop}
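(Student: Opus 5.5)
Let $s\in G$ be a noncentral nonregular semisimple element. By Lemma~\ref{lemma:nr-ss--unip}, the centralizer $C_G(s)$ contains a nontrivial unipotent element $u$. The plan is to apply the first part of Lemma~\ref{lemma:4-AIN} to the commuting pair $s,u$. This gives $s\sim su\sim u$. Since $u\ne 1$, Proposition~\ref{prop:unip-component} places the class size of $u$ in $\cU$, and hence the class size of $s$ lies in $\cU$ as well.

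The lemma has two hypotheses, which I would check in turn. First, both $s$ and $u$ must be noncentral. For $s$ this is an assumption of the statement. For $u$ it follows from Lemma~\ref{lemma:2-2}, since every nonidentity unipotent element is noncentral. Second, the orders must be coprime, $\gcd(|s|,|u|)=1$. By Lemma~\ref{lemma:2-1}, $|s|$ is odd because $s$ is semisimple, and $|u|$ is a power of $2$ because $u$ is unipotent. Finally, $s$ and $u$ commute because $u\in C_G(s)$.

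All the real work was done in Lemma~\ref{lemma:nr-ss--unip}, namely producing a nontrivial $F$-fixed unipotent element in $C_\sG(s)^\circ$. So I expect no serious obstacle here; the proof is a direct assembly of earlier results. The one point needing care is that the relation $\sim$ in Lemma~\ref{lemma:4-AIN} means the class sizes lie in the same connected component. If $|s^G|=|u^G|$, then the two class sizes are literally the same vertex, so the conclusion holds trivially.

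Concretely, the chain can be seen directly from $C_G(su)=C_G(s)\cap C_G(u)$. This equality gives $|C_G(su)|$ dividing both $|C_G(s)|$ and $|C_G(u)|$. Equivalently, the class size $|s^G|$ divides $|(su)^G|$, and $|u^G|$ divides $|(su)^G|$. This yields the path $|s^G| - |(su)^G| - |u^G|$ in $\cD(G)$; any step where two of these sizes coincide is simply a repeated vertex.
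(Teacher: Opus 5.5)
Your proposal is correct and follows essentially the same argument as the paper. It takes a nonidentity unipotent $u\in C_G(s)$ from Lemma~\ref{lemma:nr-ss--unip}, checks the hypotheses of Lemma~\ref{lemma:4-AIN} via Lemmas~\ref{lemma:2-1} and~\ref{lemma:2-2}, and concludes with Proposition~\ref{prop:unip-component}.
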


\begin{proof}
    Let $s\in G\setminus Z(G)$ be semisimple and nonregular. Then by Lemma~\ref{lemma:nr-ss--unip}, there exists a nonidentity unipotent element $u\in C_G(s)$. By Lemma~\ref{lemma:2-1}, the order of $s$ is odd and the order of $u$ is a power of two. By Lemma~\ref{lemma:2-2}, the order of $Z(G)$ is odd, so $u\notin Z(G)$. Thus Lemma~\ref{lemma:4-AIN} gives $s\sim u$, and by Proposition~\ref{prop:unip-component}, $u\in \cU$, so also $s\in \cU$.
\end{proof}

\begin{prop}\label{prop:noncent-ss-|C|-even--in-U}
   Let $s\in G$ be a noncentral semisimple element with $|C_G(s)|$ even. Then $s\in \cU$.
\end{prop}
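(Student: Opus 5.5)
Since $|C_G(s)|$ is even, Cauchy's theorem gives an element $u\in C_G(s)$ of order $2$. By Lemma~\ref{lemma:2-1}, an element of $2$-power order is unipotent, so $u$ is a nonidentity unipotent element. By Lemma~\ref{lemma:2-2}, $Z(G)$ has odd order, so $u\notin Z(G)$. Hence $u$ is a noncentral unipotent element, and by Proposition~\ref{prop:unip-component} its class size lies in $\cU$.

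Next we link $s$ to $u$. The element $s$ is semisimple and noncentral, and by Lemma~\ref{lemma:2-1} its order is odd. The element $u$ has order $2$, so $\gcd(|s|,|u|)=1$. Moreover $u$ commutes with $s$, since $u\in C_G(s)$. Both elements are noncentral, so the first part of Lemma~\ref{lemma:4-AIN} applies and gives $s\sim su\sim u$. Therefore the class size of $s$ lies in the same component as that of $u$, which is $\cU$.

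There is no real obstacle in this argument. The only points needing care are that $u$ is noncentral, which Lemma~\ref{lemma:2-2} supplies, and that the orders of $s$ and $u$ are coprime, which follows from the odd order of semisimple elements in Lemma~\ref{lemma:2-1}. The proof is the same as that of Proposition~\ref{prop:noncent-nonreg-ss--in-U}, except that the unipotent element in $C_G(s)$ now comes from Cauchy's theorem rather than from Lemma~\ref{lemma:nr-ss--unip}.
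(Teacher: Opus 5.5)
Your proposal is correct and is essentially the paper's own proof. Both take an involution $u\in C_G(s)$, use Lemmas~\ref{lemma:2-1} and~\ref{lemma:2-2} to see that $u$ is a noncentral unipotent element, note that $|s|$ is odd so the orders are coprime, and apply the first part of Lemma~\ref{lemma:4-AIN} to get $s\sim u\in\cU$.
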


\begin{proof}
    Since the order of $C_G(s)$ is even, it contains an element $u$ of order $2$, which is unipotent and noncentral by Lemmas~\ref{lemma:2-1} and~\ref{lemma:2-2}. Since $|s|$ is odd by Lemma~\ref{lemma:2-1}, we have $(|s|, |u|) = 1$ and $su = us$, and thus $s\sim u\in \cU$ by Lemma~\ref{lemma:4-AIN}.
\end{proof}

Recall that an \emph{isogeny} is a map $\varphi: \sX \to \sY$ of linear algebraic groups which is surjective and has finite kernel. A semisimple linear algebraic group $\sG$ with root system $\Phi$ is said to be \emph{simply connected} if $X(\sT)=\Omega$, equivalently if $\Lambda(\sG)=1$, and it is said to be of \emph{adjoint type} if $X(\sT)=\Z\Phi$. Let $\sG_{\text{sc}}(\Phi)$ and $\sG_{\text{ad}}(\Phi)$ denote the isomorphism classes of simply-connected and adjoint-type linear algebraic groups with root system $\Phi$, respectively. Then by \cite[\S 9.1]{MalleTesterman2011}, there are natural isogenies \[
    \sG_{\text{sc}}(\Phi) \to \sG \to \sG_{\text{ad}}(\Phi).
\]
We say that an isogeny $\rho:\sX\to \sY$ is \emph{central} if its kernel is contained in the center of $\sX$. Let $\sG_{\mathrm{sc}}$ denote the connected, simply-connected, semisimple linear algebraic group of the same root system as $\sG$, and let $\pi:\sG_{\mathrm{sc}}\to \sG$ be the simply-connected covering isogeny. For a linear algebraic group $\sH$, the \emph{component group} is $\sH / \sH^\circ$.

\begin{lemma}\label{lemma:semisimple-component-group}
Let $t\in \sG$ be a semisimple element such that $t^r \in Z(\sG)$ for some prime $r$. Then the following hold.
\begin{itemize}
\item[(1)] The {component group} $C_\sG(t) / C_\sG(t)^\circ$ is isomorphic to a quotient of a subgroup of $\ker \pi$, and has exponent dividing $r$.
\item[(2)] If this component group has nontrivial $r$-part, then $r$ divides the order of the fundamental group of $\sG$.
\end{itemize}
\end{lemma}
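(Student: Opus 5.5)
We use the standard description of centralizers of semisimple elements via the simply connected cover. Choose $\hat t\in\sG_{\mathrm{sc}}$ with $\pi(\hat t)=t$. By Steinberg's theorem (see \cite[Thm. 14.16]{MalleTesterman2011}), centralizers of semisimple elements in a simply connected group are connected, so $C_{\sG_{\mathrm{sc}}}(\hat t)$ is connected. Define
\[
    A=\{z\in\ker\pi \mid \text{there is } g\in\sG_{\mathrm{sc}} \text{ with } g\hat t g^{-1}=z\hat t\}.
\]
Since $\ker\pi$ is central, $A$ is a subgroup of $\ker\pi$: if $g\hat tg^{-1}=z\hat t$ and $h\hat th^{-1}=w\hat t$, then $gh\hat t(gh)^{-1}=wz\hat t$. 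Let
\[
    \widetilde C=\{g\in\sG_{\mathrm{sc}}\mid g\hat tg^{-1}\in \hat t\ker\pi\}=\pi^{-1}(C_\sG(t)),
\]
where the equality holds because $\pi$ is surjective and central. The map $\widetilde C\to A$, $g\mapsto g\hat tg^{-1}\hat t^{-1}$, is a surjective homomorphism with kernel $C_{\sG_{\mathrm{sc}}}(\hat t)$. Hence
\[
    \widetilde C/C_{\sG_{\mathrm{sc}}}(\hat t)\cong A.
\]
Since $C_{\sG_{\mathrm{sc}}}(\hat t)$ is connected and of finite index in $\widetilde C$, it equals $\widetilde C^\circ$. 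Applying $\pi$, we get $\pi(\widetilde C)=C_\sG(t)$ and $\pi(\widetilde C^\circ)=C_\sG(t)^\circ$, the latter because the image of a connected group of finite index is connected of finite index. Therefore $C_\sG(t)/C_\sG(t)^\circ$ is a quotient of $\widetilde C/\widetilde C^\circ\ker\pi$, which in turn is a quotient of $A\le\ker\pi$. This gives the first assertion of (1).

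For the exponent, let $z\in A$ with $g\hat tg^{-1}=z\hat t$. Since $t^r\in Z(\sG)$ and $\pi$ is central, $\hat t^r$ is central in $\sG_{\mathrm{sc}}$. Indeed, for any $h$ the element $h\hat t^rh^{-1}\hat t^{-r}$ lies in $\ker\pi$, and the map $h\mapsto h\hat t^rh^{-1}\hat t^{-r}$ is continuous from a connected group into a finite set, so it is trivial. Then
\[
    \hat t^r=g\hat t^rg^{-1}=(z\hat t)^r=z^r\hat t^r,
\]
using that $z$ is central. Hence $z^r=1$, so $A$ has exponent dividing $r$, and so does any quotient of $A$.

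For (2), suppose the component group has nontrivial $r$-part. Then $A$ is a nontrivial subgroup of $\ker\pi$, so $r$ divides $|\ker\pi|$. In characteristic $2$ one needs to relate $|\ker\pi|$ to the fundamental group $\Lambda(\sG)=\Omega/X$. The kernel of the isogeny is the diagonalizable group with character group $\Lambda(\sG)$, so its group of $k$-points is $\Hom(\Lambda(\sG),k^\times)$. This group has order equal to the $2'$-part of $|\Lambda(\sG)|$, and in any case its order divides $|\Lambda(\sG)|$. Therefore $r$ divides $|\Lambda(\sG)|$.

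The main obstacle is the last identification: $\ker\pi$, taken as $k$-points, may be smaller than $\Lambda(\sG)$ (for example, it loses its $2$-part), so one must argue carefully through characters rather than assuming $\ker\pi\cong\Lambda(\sG)$. Here only divisibility is needed, and an odd $r$ survives this comparison. The other step that needs care is citing connectedness of centralizers of semisimple elements in simply connected groups; this is Steinberg's theorem, which is stated in \cite{MalleTesterman2011}.
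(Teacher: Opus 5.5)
Your proof is correct and follows the paper's argument: the commutator homomorphism $\pi^{-1}(C_\sG(t))\to\ker\pi$ with kernel $C_{\sG_{\mathrm{sc}}}(\hat t)$, centrality of $\hat t^r$ by the connectedness argument, and then $z^r=1$. The differences are minor. You derive the identification of the component group with a quotient of this image from Steinberg's connectedness theorem instead of citing \cite[Prop.~14.20]{MalleTesterman2011}; here $\hat t$ is automatically semisimple, since its unipotent part maps to $1$ and $\ker\pi$ consists of semisimple elements. You also justify $|\ker\pi|\mid|\Lambda(\sG)|$ via $\ker\pi\cong\Hom(\Lambda(\sG),k^\times)$, which the paper only asserts.
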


\begin{proof}
By \cite[Prop. 14.20]{MalleTesterman2011}, the component group $C_\sG(t)/C_\sG(t)^\circ$ is isomorphic to a quotient of a subgroup of $\ker\pi$.

Choose a lift $\hat t\in \sG_{\mathrm{sc}}$ with $\pi(\hat t)=t$. Set $\sE=\pi^{-1}(C_\sG(t))$, and define ${\theta:\sE\to \ker\pi}$ by $\hat g\mapsto [\hat g,\hat t]$. It is clear that $\theta$ is well-defined, as $\pi([\hat t, \hat g]) = [g,t] = 1$, and that $\theta$ is a group homomorphism since $\ker\pi$ is central. Since $\ker\theta=C_{\sG_{\mathrm{sc}}}(\hat t)$, we have $\theta(\sE) \cong  \sE/C_{\sG_{\mathrm{sc}}}(\hat t)\hookrightarrow \ker\pi$.

For $\hat g\in \sE$, since $\pi(\hat g)\in C_\sG(t)$, we have $\pi([\hat g,\hat t])=1$, thus $[\hat g,\hat t]\in \ker\pi$, which is central, so $[\hat g,\hat t]^r=[\hat g,\hat t^r]$. Since $\pi(\hat t^r)=t^r\in Z(\sG)$, for every $\hat x\in \sG_{\mathrm{sc}}$ we have $[\hat t^r,\hat x]\in \ker\pi$. The map $\hat x\mapsto [\hat t^r,\hat x]$ is a map from the connected group $\sG_{\mathrm{sc}}$ to $\ker\pi$, which is finite. Therefore the map is constant, and evaluating at $\hat x=1$ shows that it is the trivial map. Thus $\hat t^r\in Z(\sG_{\mathrm{sc}})$. Therefore $[\hat g,\hat t]^r=1$, so every element of $\theta(\sE)$ has order dividing $r$. By the identification in \cite[Prop. 14.20]{MalleTesterman2011}, $C_\sG(t)/C_\sG(t)^\circ$ has exponent dividing $r$.

Lastly, since $C_\sG(t)/C_\sG(t)^\circ$ is isomorphic to a quotient of a subgroup of $\ker\pi$, if this component group has nontrivial $r$-part, then $r$ divides $|\ker\pi|$. But the order of $\ker\pi$ divides the order of the fundamental group of $\sG$.
\end{proof}

Now we focus on the remaining semisimple elements of $G$ whose components have not yet been determined. Fix a noncentral element $s\in G$ which is regular and semisimple. When necessary, we also assume that its centralizer has odd order. We fix the notation $\sT=C_\sG(s)^\circ$, which by \cite[Cor. 24.10]{MalleTesterman2011} is a maximal torus, and set $T=\sT^F$. Since both $Z(G)\le C_G(s)$ and $T\le C_G(s)$, set $A=TZ(G)\le C_G(s)$. In Section~\ref{sec:5}, we treat the case $A<C_G(s)$. In Section~\ref{sec:6}, we handle the case $A=C_G(s)$. These sections identify the elements lying in $\cU$, the isolated vertices, and the exceptional two-vertex components.

\section{Regular Semisimple Elements with $A < C_G(s)$}\label{sec:5}
\begin{lemma}\label{lemma:5-2}
    The center $Z(G)$ of $G$ is contained in the center $Z(\sG)$ of $\sG$.
\end{lemma}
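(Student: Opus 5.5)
The plan is to show that any $z\in Z(G)$ centralizes all of $\sG$. The tool is the Jordan decomposition together with the fact that a finite group of Lie type is "large" inside $\sG$: its unipotent elements generate something big enough that centralizing them forces centralizing $\sG$.

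First, let $z\in Z(G)$. By Lemma~\ref{lemma:2-2}, $|z|$ is odd. The unipotent part $z_u$ is a power of $z$, so it has odd order. It also has $2$-power order by Lemma~\ref{lemma:2-1}, so $z_u=1$ and $z$ is semisimple.

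Next, consider $\sC=C_\sG(z)^\circ$. As in the proof of Lemma~\ref{lemma:nr-ss--unip}, this is an $F$-stable reductive subgroup containing an $F$-stable maximal torus $\sT$. Since $z$ centralizes $G$, every unipotent element of $G$ lies in $C_\sG(z)$. A unipotent element $u$ of $C_\sG(z)$ has $2$-power order, while the component group $C_\sG(z)/\sC$ is a quotient of a subgroup of $\ker\pi$ by \cite[Prop. 14.20]{MalleTesterman2011}, and $\ker\pi\le Z(\sG_{\mathrm{sc}})$ has odd order by Lemma~\ref{lemma:2-2} applied to $\sG_{\mathrm{sc}}$. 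Hence $u\in\sC$, so $\sC$ contains all unipotent elements of $G$.

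In particular, choose an $F$-stable Borel $\sB\ge\sT$ of $\sG$ with $F$-stable opposite Borel $\sB^-$ relative to $\sT$. Then $\sC$ contains $R_u(\sB)^F$ and $R_u(\sB^-)^F$. The key claim is that $\sC$ therefore contains every root subgroup $\sU_\alpha$ of $\sG$ relative to $\sT$. For this, I would use that $\sC$ is generated by $\sT$ and the root subgroups $\sU_\beta$ for $\beta$ in its root system $\Phi_\sC\subseteq\Phi$, and that $\Phi_\sC$ is $F$-stable. If some $\alpha\in\Phi^+$ were not in $\Phi_\sC$, the corresponding $F$-orbit root subgroup of $R_u(\sB)^F$ (a nontrivial group of order $q_\alpha>1$ by \cite[Prop. 23.8]{MalleTesterman2011}) would not lie in $\sC$. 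Then $|R_u(\sB)^F|=\prod_{\Phi^+}q_\alpha$ would exceed $|(R_u(\sB)\cap\sC)^F|$, which is a contradiction. Thus $\Phi_\sC=\Phi$ and $\sC=\sG$, so $z\in Z(\sG)$.

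I expect the main obstacle to be this last step: making precise that $R_u(\sB)^F\le\sC$ forces each root of $\sG$ into $\Phi_\sC$. I would argue via the order formula comparing $(R_u(\sB)\cap\sC)^F$, which is the unipotent radical of the Borel $(\sB\cap\sC)$ of $\sC$, with $R_u(\sB)^F$. Alternatively, I would simply cite \cite[Prop. 24.21]{MalleTesterman2011}-type results stating $Z(\sG^F)=Z(\sG)^F$, which is already used in the proof of Lemma~\ref{lemma:2-2}. That citation gives the lemma immediately, since $Z(G)=Z(\sG)^F\le Z(\sG)$, so I would present it as the primary argument, with the structural argument above as justification.
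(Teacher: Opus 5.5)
Your primary argument, invoking $Z(\sG^F)=Z(\sG)^F$ to conclude $Z(G)=Z(\sG)^F\le Z(\sG)$, is exactly the paper's proof, which cites \cite[Cor.~24.13]{MalleTesterman2011}. The structural sketch is not needed, and as written it could not serve as an independent justification: it takes the oddness of $|z|$ from Lemma~\ref{lemma:2-2}, whose proof already uses this same identity, and an arbitrary $F$-stable maximal torus of $C_\sG(z)^\circ$ need not lie in any $F$-stable Borel subgroup of $\sG$ (only maximally split tori do).
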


\begin{proof}
    By \cite[Cor. 24.13]{MalleTesterman2011}, $Z(G) = Z(\sG^F) = Z(\sG)^F \le Z(\sG)$.
\end{proof}

Fix an element $t = s^m$, for some $m\ge 1$, and assume that $t$ is also noncentral, regular, and semisimple. Then similarly $C_\sG(t)^\circ = \sT$, and we define \[
    C_G(t)^\circ = C_G(t) \cap C_\sG(t)^\circ.
\]
Clearly $C_\sG(s)\le C_\sG(t)$, and as $\sT$ is connected, $\sT  = C_\sG(s)^\circ \le C_\sG(t)$. Both $C_\sG(s)^\circ$ and $C_\sG(t)^\circ$ are maximal tori, as $s$ and $t$ are regular semisimple, and thus must be equal. Since $\sT \le C_\sG(t)$, it follows that $C_G(t)^\circ = T = C_G(s)^\circ$. Because $t = s^m$, it follows that $C_G(s) \le C_G(t)$, so if $C_G(s) > A$, it also follows that $C_G(t) > A$. Henceforth, let $\overline G = G / Z(G)$, and for $g\in G$, let $\overline g$ denote its image in $\overline G$.

\begin{lemma}\label{lemma:ss-odd-nonreg}
    Assume that $C_G(s) > A$ and $|C_G(s)|$ is odd. Then one of the following holds.
    \begin{itemize}
        \item[(1)] The element $s$ belongs to the unipotent component $\cU$.
        \item[(2)] There exists a power $t = s^m$ and an odd prime $r$ such that $t$ is noncentral, regular, and semisimple, with $|C_G(t)|$ odd and $C_G(t) > A$, and moreover $\overline t$ has order $r$ in $\overline G = G/Z(G)$, and $r$ divides the order of the fundamental group $\Lambda(\sG)$. 
    \end{itemize}
\end{lemma}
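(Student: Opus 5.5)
The plan is to pass to a power of $s$ whose image in $\overline G$ has prime order, and then split according to whether that power is regular. Since $s$ is semisimple, $|s|$ is odd by Lemma~\ref{lemma:2-1}. Since $s$ is noncentral, $\overline s\neq 1$ in $\overline G = G/Z(G)$, and $|\overline s|$ is odd. Choose an odd prime $r$ dividing $|\overline s|$ and set $m = |\overline s|/r$ and $t = s^m$. Then $\overline t$ has order exactly $r$, so $t\notin Z(G)$ and $t^r\in Z(G)$. Also $t$ is semisimple, being a power of a semisimple element.

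\emph{Case analysis on $t$.} If $t$ is nonregular, then Proposition~\ref{prop:noncent-nonreg-ss--in-U} puts $t$ in $\cU$. If $|C_G(t)|$ is even, then Proposition~\ref{prop:noncent-ss-|C|-even--in-U} puts $t$ in $\cU$. In either situation Lemma~\ref{lemma:4-AIN} (second bullet, $x\sim x^m$ with $x^m$ noncentral) gives $s\sim t$, so $s\in\cU$ and alternative (1) holds. Otherwise $t$ is noncentral, regular and semisimple with $|C_G(t)|$ odd. The discussion preceding the lemma then gives $C_G(t)^\circ = T$ and $C_G(t)\ge C_G(s) > A$. It remains only to show $r \mid |\Lambda(\sG)|$.

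\emph{Divisibility by $r$.} By Lemma~\ref{lemma:5-2}, $t^r\in Z(G)\le Z(\sG)$, so Lemma~\ref{lemma:semisimple-component-group} applies to $t$ with the prime $r$. The component group $C_\sG(t)/C_\sG(t)^\circ$ has exponent dividing $r$. If it is nontrivial, its $r$-part is nontrivial, and part (2) of that lemma yields $r\mid|\Lambda(\sG)|$.

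So the key step, which I expect to be the main (though mild) obstacle, is showing this component group is nontrivial. Suppose it were trivial. Then $C_\sG(t) = C_\sG(t)^\circ = \sT$, and hence $C_G(t) = \sT^F = T \le A$. This contradicts $C_G(t) > A$. The only care needed is to use $C_\sG(t)^\circ = \sT$ for the regular element $t$, as established before the lemma, so that every element of $C_G(t)$ outside $T$ witnesses a nontrivial coset in the component group. With that, alternative (2) holds.
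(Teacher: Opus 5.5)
Your proposal is correct and follows essentially the same route as the paper: pass to a power $t=s^m$ with $\overline t$ of prime order $r$, dispose of the nonregular and even-centralizer cases with the two propositions of Section~\ref{sec:4}, and conclude via Lemma~\ref{lemma:semisimple-component-group}. The only cosmetic difference is how you show the component group is nontrivial: you argue by contradiction ($C_\sG(t)=\sT$ would force $C_G(t)=T\le A$), whereas the paper uses the map $C_G(t)\to C_\sG(t)/C_\sG(t)^\circ$ with kernel $T$ and also checks $A=T$, a step your version does not need.
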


\begin{proof}
    Since $\overline s \in \overline G$ is nontrivial by assumption, we may choose a power $t = s^m$ such that $\overline t\in \overline G$ has prime order $r$. By Lemma~\ref{lemma:4-AIN}, $s\sim t$, and by Lemma~\ref{lemma:2-1}, $r$ is odd since $t$ is semisimple. If $t$ is nonregular, then $t\in \cU$ and therefore $s\in \cU$, by Proposition~\ref{prop:noncent-nonreg-ss--in-U}, and the result is proved. Therefore, assume that $t$ is regular. If $|C_G(t)|$ is even, then by Proposition~\ref{prop:noncent-ss-|C|-even--in-U}, $t\in \cU$, and therefore also $s\in \cU$. So assume that $|C_G(t)|$ is odd.

    Next consider the map \[
        p: C_G(t) \to C_\sG(t) / C_\sG(t)^\circ 
    \]
    to the component group. The kernel is $C_G(t) \cap C_\sG(t)^\circ = T$. By Lemma~\ref{lemma:5-2}, $Z(G)\le Z(\sG)$, and moreover $Z(\sG) \le C_\sG(\sT) = \sT$, where the equality follows from \cite{MalleTesterman2011}, using that $\sG$ is connected and reductive. Then $Z(G) \le \sT\cap G = \sT^F = T$, so $A = TZ(G) = T$, and therefore $C_G(t) > T$. Thus the image of $p$ is nontrivial, as $T$ is its kernel.

    Since $t^r \in Z(G) \le Z(\sG)$, Lemma~\ref{lemma:semisimple-component-group} implies that $C_\sG(t) / C_\sG(t)^\circ$ has exponent dividing $r$. The nontrivial image of $C_G(t)$ therefore does as well, and $r$ divides the order of the fundamental group of $\sG$.
\end{proof}

\begin{lemma}\label{lemma:four-cases}
    Assume the hypotheses of Lemma~\ref{lemma:ss-odd-nonreg}. Then the second consequence can occur only if the Lie type of $\sG$ is one of the following: $A_\ell$, ${}^2A_\ell$, $E_6$, or ${}^2 E_6$.
\end{lemma}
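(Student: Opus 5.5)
The second consequence of Lemma~\ref{lemma:ss-odd-nonreg} gives an odd prime $r$ dividing the order of the fundamental group $\Lambda(\sG)=\Omega/X(\sT)$. This group is a quotient of $\Lambda(\Phi)=\Omega/\Z\Phi$, since $\Z\Phi\le X\le \Omega$. Hence $r$ divides $|\Lambda(\Phi)|$. The plan is therefore to run through the list of Lie types under consideration and record the order of $\Lambda(\Phi)$ for each underlying root system, then discard every type for which $|\Lambda(\Phi)|$ has no odd prime divisor.

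The standard values of the fundamental group are:
\begin{itemize}
\item $\Lambda(A_\ell)\cong \Z/(\ell+1)$;
\item $\Lambda(C_\ell)\cong \Z/2$;
\item $\Lambda(D_\ell)$ has order $4$;
\item $\Lambda(E_6)\cong\Z/3$ and $\Lambda(E_7)\cong \Z/2$;
\item $\Lambda$ is trivial for $G_2$, $F_4$ and $E_8$.
\end{itemize}
Twisted types have the same underlying root system for $\sG$: ${}^2A_\ell$ and ${}^2E_6$ have root systems $A_\ell$ and $E_6$, and ${}^2D_\ell$ and ${}^3D_4$ have root system $D_\ell$ (with $\ell=4$ in the latter). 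The Suzuki and Ree groups ${}^2B_2$ and ${}^2F_4$ come from $\sG$ of type $B_2=C_2$ and $F_4$, giving orders $2$ and $1$. So for $C_\ell$, $D_\ell$, ${}^2D_\ell$, ${}^3D_4$, $G_2$, $F_4$, $E_7$, $E_8$, ${}^2B_2$ and ${}^2F_4$, the order of $\Lambda(\Phi)$ is a power of $2$. It therefore has no odd prime divisor $r$, and the second consequence is impossible.

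The remaining types are $A_\ell$, ${}^2A_\ell$ (when $\ell+1$ has an odd prime factor) and $E_6$, ${}^2E_6$ (with $r=3$). These are exactly the types listed in the statement.

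The main obstacle is not the case analysis but justifying that $r$ divides $|\Lambda(\Phi)|$ and not merely $|\ker\pi|$. This goes through $\Lambda(\sG)$ being a quotient of $\Lambda(\Phi)$, together with the fact that $\ker\pi$ has order dividing $|\Lambda(\sG)|$ as used in Lemma~\ref{lemma:semisimple-component-group}. A second point to check is the even-characteristic special isogenies, for example for $B_\ell$, $C_\ell$, $F_4$, $G_2$ and the twisted types ${}^2B_2$, ${}^2F_4$. These only concern the $2$-part, and the relevant $\sG$ is still an ordinary semisimple group whose root datum has $\Lambda$ a $2$-group or trivial, so they do not affect the argument for odd $r$.
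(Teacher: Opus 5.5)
Your proposal is correct and follows the paper's argument. The second alternative of Lemma~\ref{lemma:ss-odd-nonreg} gives an odd prime $r$ dividing $|\Lambda(\sG)|$, which is a quotient of $\Lambda(\Phi)$; inspecting the fundamental groups of the root systems, which the paper does by citing \cite[Table 9.2]{MalleTesterman2011}, leaves only $A_\ell$ and $E_6$. Your aside on special isogenies is unnecessary (and for $G_2$ they arise in characteristic $3$, not $2$), but it does not affect the argument.
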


\begin{proof}
    In this situation, an odd prime $r$ divides the order of $\Lambda(\sG)$. By inspection of \cite[Table 9.2]{MalleTesterman2011}, the only Lie types whose fundamental group order admits an odd prime divisor are those listed. Indeed, for root systems $\Phi=A_{n-1}$ and $\Phi=E_6$, the corresponding fundamental groups are respectively $\Lambda(\Phi) = \Z/n$ and $\Lambda(\Phi) = \Z/3$.
\end{proof}

\begin{lemma}\label{lemma:two-are-impossible-sc}
     Assume the hypotheses of Lemma~\ref{lemma:ss-odd-nonreg}, and assume further that $\sG$ is simply-connected. Then the second consequence is impossible.
\end{lemma}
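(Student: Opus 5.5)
The argument rests on a classical fact: in a simply-connected semisimple group, centralizers of semisimple elements are connected. This is Steinberg's theorem, recorded in \cite[Thm. 14.16]{MalleTesterman2011}. I would show that under the second consequence of Lemma~\ref{lemma:ss-odd-nonreg} the component group of $C_\sG(t)$ would have to be nontrivial, which contradicts connectedness.

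First, suppose the second consequence holds. Then $t=s^m$ is noncentral, regular and semisimple, $|C_G(t)|$ is odd, and $C_G(t)>A$. As established in the proof of Lemma~\ref{lemma:ss-odd-nonreg}, we have $Z(G)\le Z(\sG)\le \sT$, hence $A=T$. Next, consider the map $p\colon C_G(t)\to C_\sG(t)/C_\sG(t)^\circ$ from that proof. Its kernel is $C_G(t)\cap \sT = T$. Since $C_G(t)>T$, the image of $p$ is nontrivial, so the component group $C_\sG(t)/C_\sG(t)^\circ$ is nontrivial.

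Now we use that $\sG$ is simply connected, which makes $C_\sG(t)$ connected. So $C_\sG(t)=C_\sG(t)^\circ=\sT$. The same conclusion follows from Lemma~\ref{lemma:semisimple-component-group}(1) with $\pi$ the identity: $\ker\pi=1$, so the component group is a quotient of a subgroup of the trivial group. Either way $C_G(t)=\sT^F=T=A$, which contradicts $C_G(t)>A$. Hence the second consequence cannot occur.

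There is no serious obstacle. The only point needing care is the identification $\ker p = T$, which requires $C_\sG(t)^\circ=\sT$ for regular semisimple $t$; the paper already fixes this after Lemma~\ref{lemma:5-2}. Also, citing Lemma~\ref{lemma:semisimple-component-group} with the trivial covering $\pi=\mathrm{id}$ avoids relying on any additional external reference.
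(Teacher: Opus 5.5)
Your proof is correct and is essentially the paper's argument. Simply-connectedness forces $C_{\sG}(t)=C_{\sG}(t)^\circ=\sT$; the paper gets this from Lemma~\ref{lemma:semisimple-component-group} with $\ker\pi$ trivial, which is exactly your second route. Hence $C_G(t)=T=A$, contradicting $C_G(t)>A$ (your preliminary step via $p$ showing the component group is nontrivial is harmless but not needed).
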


\begin{proof}
    Let $t\in G$ be the regular semisimple element in the second case of Lemma~\ref{lemma:ss-odd-nonreg}. Then by Lemma~\ref{lemma:semisimple-component-group}, the quotient $C_\sG(t) / C_\sG(t)^\circ$ is a quotient of a subgroup of $\ker \pi$, where $\pi$ is the natural simply-connected isogeny. Since $\sG$ is simply-connected, $\ker\pi$ is trivial, so $C_\sG(t) = C_\sG(t)^\circ$. Since $t$ is regular and semisimple, $\sT = C_\sG(t)^\circ = C_\sG(t)$. But also $Z(G)\le C_G(t)=(C_\sG(t))^F=\sT^F=T$, and therefore $A=TZ(G)=T$, a contradiction to the assumption that $C_G(t) > A$.
\end{proof}

\begin{lemma}\label{lemma:E6-adjoint--impossible}
    Assume the hypotheses of Lemma~\ref{lemma:ss-odd-nonreg}, and assume further that $\sG$ is of adjoint type $E_6$ or ${}^2E_6$.
    Then the second consequence of the lemma is impossible.
\end{lemma}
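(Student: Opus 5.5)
We argue by contradiction. Suppose the second case of Lemma~\ref{lemma:ss-odd-nonreg} holds with $\sG$ adjoint of type $E_6$. Then there is a noncentral regular semisimple $t=s^m$ with $|C_G(t)|$ odd, $C_G(t)>A$, $\overline t$ of odd prime order $r$, and $r\mid|\Lambda(\sG)|$. By Lemma~\ref{lemma:four-cases} and the fact that $\Lambda(E_6)=\Z/3$, this forces $r=3$.

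Since $\sG$ is adjoint, $Z(\sG)=1$. By Lemma~\ref{lemma:5-2}, $Z(G)=1$ as well, so $t$ itself has order $3$, and $T=A$.

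The strategy is to show that a semisimple element of order $3$ in adjoint $E_6$ with disconnected centralizer cannot be regular. Proceed as in Lemma~\ref{lemma:semisimple-component-group}. Lift $t$ to $\hat t\in\sG_{\mathrm{sc}}$; then $\hat t^3\in Z(\sG_{\mathrm{sc}})\cong\Z/3$ (recall $p=2$, so the center is $\Z/3$). Since $C_G(t)>T$, the component group $C_\sG(t)/C_\sG(t)^\circ$ is nontrivial, of order $3$. The first key step is to identify such $t$ explicitly. Elements of order $3$ in $\sG_{\mathrm{ad}}$ whose centralizers are disconnected are exactly the images of $\hat t$ with $\hat t^3$ a nontrivial central element. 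Via Kac coordinates, or equivalently the Borel--de Siebenthal classification, these are the classes whose connected centralizer has type $A_2^3$ (the class coming from the extended diagram vertex of label $3$) together with the other class invariant under the order-$3$ diagram symmetry of the extended $\tilde E_6$ diagram. In each case $C_\sG(t)^\circ$ has positive-dimensional semisimple part, for instance $A_2A_2A_2$ of dimension $24>6$.

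The main obstacle is to make this identification rigorous rather than by table lookup. Concretely, I will write $t=\exp$ of a point $x$ in the fundamental alcove with Kac coordinates $(a_0,\dots,a_6)$ satisfying $\sum m_ia_i=3$. Disconnectedness of the centralizer means $x$ is fixed, up to translation, by the nontrivial element of $\Lambda\cong\Z/3$, which acts on the extended diagram by rotation. The rotation-invariant solutions are then few: $a_0=a_1=a_6=1$ with all other coordinates $0$, or the coordinate at the central node with $m=3$ equal to $1$. In both cases some $a_i=0$, so a root subsystem survives in $C_\sG(t)^\circ$ and $\dim C_\sG(t)>6$. Hence $t$ is nonregular, contradicting the regularity assumed in Lemma~\ref{lemma:ss-odd-nonreg}(2).

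For ${}^2E_6$, the same argument applies, because regularity and the component group are properties of $\sG$ and do not depend on $F$. Alternatively, it suffices to note that $|C_G(t)|$ would be even by Lemma~\ref{lemma:nr-ss--unip}.
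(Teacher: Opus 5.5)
Your argument is correct, but it takes a different and heavier route than the paper. The paper never uses that $C_\sG(t)$ is disconnected. It shows that \emph{no} element with $t^3=1$ is regular, using only the $A_3$ subsystem spanned by $\alpha_4,\alpha_5,\alpha_6$ and the criterion that $t$ is regular iff $\alpha(t)\ne 1$ for all roots. Write $\alpha_i(t)=\zeta^{e_i}$ with $e_i\in\F_3$. Regularity forces $e_4,e_5,e_6\ne 0$, and the roots $\alpha_4+\alpha_5$ and $\alpha_5+\alpha_6$ then force $e_4=e_5=e_6$, so $(\alpha_4+\alpha_5+\alpha_6)(t)=\zeta^{3e_4}=1$.

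You instead restrict to classes with disconnected centralizer, identifying the component group with the stabilizer of the alcove point in $\Lambda(\Phi)\cong\Z/3$. Your enumeration of the invariant solutions of $\sum m_i a_i=3$ is right; they give $C_\sG(t)^\circ$ of types $A_2^3$ and $D_4T_2$. This yields more structural information than the paper's argument. The cost is that it rests on Kac-coordinate and alcove facts that must be cited. It must also be rephrased via the odd-order torsion of $\sT\cong Y\otimes k^\times$, since $\exp$ has no meaning over $k$. Within your own framework the invariance step is unnecessary anyway: $t$ is regular iff $x$ lies in the open alcove, i.e.\ all seven $a_i\ge 1$, which forces $\sum m_i a_i\ge 12>3$.

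One sentence should be deleted because it is false. The order-$3$ elements of $\sG_{\mathrm{ad}}$ with disconnected centralizer are \emph{not} those whose lift has $\hat t^3$ a nontrivial central element. For your two invariant classes, $x=\omega_4^\vee/3$ and $x=(\omega_1^\vee+\omega_6^\vee)/3$, one has $3x\in\Z\Phi^\vee$, so $\hat t^3=1$ for every lift. Conversely, $x=\omega_1^\vee/3$ has $\hat t^3$ a nontrivial central element but a connected centralizer of type $D_5T_1$. You never use this characterization afterwards, so the proof itself is unaffected.
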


\begin{proof}
    Since the fundamental group of $\sG$ is $\Z/3$, the prime $r$ is $3$, so $t^3 = 1$. We will show that a nontrivial semisimple element of order $3$ in $\sG$ is not regular. Let $\sT$ be a maximal torus containing $t$, and let $\Phi$ denote the root system of $\sG$ with respect to $\sT$. Since $\sG$ is adjoint, $X(\sT)$ is the root lattice $\Z\Phi$. Evaluation at $t$ defines a map \[
        f:X(\sT) = \Z\Phi \to \mu_3.
    \]
    Write the images $\alpha_i(t) = \zeta^{e_i}$, where $\zeta$ is a primitive $3$rd root of unity, $e_i\in \F_3$, and $\alpha_i$ are the six simple roots. Take the Dynkin diagram to be as in \cite[Table 9.1]{MalleTesterman2011}, in which $\alpha_4$-$\alpha_5$-$\alpha_6$ forms a subdiagram $A_3$. By \cite[A.25]{MalleTesterman2011}, the roots of $A_3$ arising from this subdiagram are also roots of $E_6$, namely $\alpha_4$, $\alpha_5$, $\alpha_6$, $\alpha_4+\alpha_5$, $\alpha_5+\alpha_6$, and $\alpha_4+\alpha_5+\alpha_6$.

    By \cite[Cor. 14.10]{MalleTesterman2011}, $t$ is regular if and only if $\alpha(t) \ne 1$ for all roots $\alpha$. Assuming $t$ is regular, $e_4$, $e_5$, and $e_6$ are nonzero. Also $e_4 + e_5 \ne 0 \implies e_4 = e_5$, and similarly $e_5 = e_6$. Then $e_4 + e_5 + e_6 = 0$, a contradiction.
\end{proof}

\begin{lemma} \label{lemma:An-adjoint}
    Assume the hypotheses of Lemma~\ref{lemma:ss-odd-nonreg}, and suppose further that $\sG$ is of Lie type $A_{n-1}$; thus $G = \sG^F$ has type $A_{n-1}$ in the split case, or ${}^2A_{n-1}$ in the unitary case. Suppose that the second consequence of Lemma~\ref{lemma:ss-odd-nonreg} holds. Then $n = r$ is an odd prime, $\sG$ is of adjoint type, and $G = \PGL_r(q)$ in the split case or $G = \PGU_r(q)$ in the unitary case. Moreover, in the split case $r\mid q - 1$ and in the unitary case $r\mid q + 1$.
\end{lemma}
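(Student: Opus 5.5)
The plan is to lift everything to the simply connected cover $\pi:\SL_n(k)\to\sG$ and turn the nontriviality of the component group into a statement about eigenvalues. Let $t=s^m$ and $r$ be as in the second consequence of Lemma~\ref{lemma:ss-odd-nonreg}. As shown in the proof of that lemma, $A=T$, so we may choose $g\in C_G(t)\setminus T$. Choose lifts $\hat g,\hat t\in\SL_n(k)$.

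\textbf{Step 1: the commutator is a nontrivial central element of order $r$.} Set $\zeta=[\hat g,\hat t]$. Then $\zeta\in\ker\pi$, and $\ker\pi$ consists of scalar matrices. The element $\zeta$ does not depend on the choice of lifts, since changing a lift multiplies it by a central element. By the argument in the proof of Lemma~\ref{lemma:semisimple-component-group}, $\hat t^r$ is central and $\zeta^r=1$.

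Next we show $\zeta\neq1$. Regularity of $t$ is a condition on root values, so it is inherited by $\hat t$. In $\SL_n(k)$ the centralizer of a semisimple element is connected, because $\SL_n$ is simply connected. Hence $C_{\SL_n}(\hat t)$ is a maximal torus, and its image under $\pi$ is the maximal torus $C_\sG(t)^\circ=\sT$. If $\zeta=1$, then $\hat g$ would lie in this torus, forcing $g\in\sT^F=T$, which is a contradiction. So $\zeta$ is a primitive $r$-th root of unity (as a scalar), and $\hat g\hat t\hat g^{-1}=\zeta\hat t$.

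\textbf{Step 2: eigenvalue count gives $n=r$.} Regularity of $\hat t$ in $\SL_n$ means its $n$ eigenvalues are pairwise distinct. Since $\hat t$ is conjugate to $\zeta\hat t$, multiplication by $\zeta$ permutes this set of eigenvalues. This action has no fixed points, so every orbit has size exactly $r$, and therefore $r\mid n$.

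On the other hand, $\hat t^r=c\cdot I$ for some scalar $c$. So every eigenvalue of $\hat t$ is an $r$-th root of $c$, and in odd characteristic-free terms there are at most $r$ such roots, since $r$ is odd and $k$ has characteristic $2$. Distinctness then gives $n\le r$. Combining, $n=r$.

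\textbf{Step 3: identify $\sG$.} The fundamental group is $\Lambda(A_{r-1})=\Z/r$ with $r$ prime, so $\sG$ is either simply connected or adjoint. Lemma~\ref{lemma:two-are-impossible-sc} excludes the simply connected case. Hence $\sG=\PGL_r(k)$, and $G$ is $\PGL_r(q)$ in the split case or $\PGU_r(q)$ in the unitary case.

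\textbf{Step 4: congruence condition from Frobenius-stability of the commutator.} Since $g,t\in G$ are $F$-fixed, the elements $F(\hat g)$ and $F(\hat t)$ are again lifts of $g$ and $t$, where $F$ also denotes the induced Steinberg endomorphism of $\SL_r(k)$. By the lift-independence in Step 1, $F(\zeta)=\zeta$.

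In the split case $F$ acts on scalars by $\zeta\mapsto\zeta^q$, so $\zeta^{q-1}=1$ and $r\mid q-1$. In the unitary case $F$ is the standard Frobenius composed with inverse-transpose (up to conjugation by a fixed matrix), so it acts on scalars by $\zeta\mapsto\zeta^{-q}$. This gives $\zeta^{q+1}=1$, hence $r\mid q+1$.

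I expect the main care to be needed in Step 4. One must check that the commutator really is independent of the lifts and that the twisted Frobenius acts on the centre of $\SL_r$ as claimed; the conjugating matrix in the unitary Frobenius acts trivially on scalars, so it does not affect this. The one other point needing care is in Step 1, namely that $C_{\SL_n}(\hat t)$ is connected, which I will cite from the simply connected case.
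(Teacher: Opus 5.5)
Your proof is correct and follows the paper's route: lift to $\SL_n$, bound the number of eigenvalues of $\hat t$ by $r$, use regularity to get $n$ distinct eigenvalues, and read off $r\mid q-\varepsilon$ from the $F$-invariance of the nontrivial scalar commutator $[\hat g,\hat t]$. The differences are cosmetic. You get $r\mid n$ from the free action of $\zeta$ on the eigenvalues, where the paper uses $r\mid|\ker\pi|\mid n$, and you get adjointness from Lemma~\ref{lemma:two-are-impossible-sc}, where the paper deduces $|\ker\pi|=r$.
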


\begin{proof}
    Let $\pi : \sG_{\text{sc}} \to \sG$ be the simply-connected cover. In this case, $\sG_{\text{sc}}\cong \SL_n = \SL_n(k)$. By \cite[\S 9.1]{MalleTesterman2011}, since we have taken $\sG$ to be connected, $\ker\pi \le Z(\SL_n)$. Recalling our assumptions, $C_G(t) > A$, $t$ is regular and semisimple, and thus $C_\sG(t)^\circ = \sT$; the kernel of the map of $C_G(t)$ to the component group is $T = \sT^F$, $Z(G)\le T$, and thus $A = T$. So $C_G(t)>T$ gives a nontrivial subgroup of the component group, which is a quotient of a subgroup of $\ker\pi$, with exponent dividing $r$, by Lemma~\ref{lemma:semisimple-component-group}. Therefore, $r\mid|\ker\pi|$. Secondly, since $\Lambda(\sG) = \Z/n$, $|\ker\pi|\mid n$, and together we get $r\mid n$.

    Lift $t$ to $\hat t \in \SL_n$. Since $t^r\in Z(\sG)$, also $\hat t^r \in Z(\SL_n)$, which is composed of scalars. Here we have used the fact that $\pi^{-1}(Z(\sG)) = Z(\SL_n)$. The minimal polynomial of $\hat t$ divides $X^r - \lambda$ for some scalar $\lambda$, so $\hat t$ has at most $r$ eigenvalues.

    We show now that $\hat t$ has $n$ distinct eigenvalues. Let $\widehat \sT = \pi^{-1}(\sT)^\circ$, and choose the lift $\hat t$ to be in $\widehat\sT$. Note that $\widehat\sT$ is a maximal torus of $\SL_n$, which without loss of generality we can take to be the diagonal torus. Therefore we write $\hat t = \diag(\lambda_1, \dots, \lambda_n)$ with $\prod \lambda_i = 1$. The roots of $\sG$ with respect to $\sT$ pull back to roots of $\SL_n$ with respect to $\widehat \sT$, which by \cite[8.2]{MalleTesterman2011} are of the form $\chi_{ij}$ for $1\le i < j \le n$, where $\chi_{ij}(\diag(\lambda_1, \dots, \lambda_n)) = \lambda_i\lambda_j^{-1}$. The element $t$ is regular if and only if $\alpha(t) \ne 1$ for all roots $\alpha$ of $\sT$, and hence for all roots of $\widehat \sT$, which arise as the pullback of roots of $\sT$. Thus $\lambda_i \ne \lambda_j$ for all $i\ne j$, which provides $n$ distinct eigenvalues. 

    Since $n\le r$ and $r\mid n$, we get $n = r$, and hence $\ker\pi = Z(\SL_n)$. This implies that $\sG$ is of adjoint type. So $G = \PGL_r(q)$ in the split case and $G = \PGU_r(q)$ in the unitary case. 
    
    It remains to show that $r\mid q- 1$ in the split case and $r\mid q + 1$ in the unitary case. Write $T = (C_\sG(t)^\circ)^F$, and since $C_G(t) > T$, take $g\in C_G(t) \setminus T$. Choose a lift $\hat g\in \SL_r$. Then $[\hat g, \hat t] \in \ker \pi = \mu_r$, which consists of scalars, so write $[\hat g, \hat t] = c$. Suppose $c = 1$. Then $\hat g\in C_{\SL_r}(\hat t)$, the maximal torus lying over $T$, and thus $g\in T$, a contradiction. So $c$ has order $r$. Since $g$ and $t$ are fixed by $F$, their preimages are fixed modulo scalars, so write $F(\hat g) = a\hat g$ and $F(\hat t) = b\hat t$, where $a$ and $b$ are scalars. Then $F(\hat g \hat t \hat g^{-1}) = b\hat g\hat t\hat g^{-1} = bc\hat t = cF(\hat t)$. But this expression also equals $F(c\hat t) = F(c)F(\hat t)$, so $F(c) = c$. Due to \cite[21.14]{MalleTesterman2011}, in the split case $F(c) = c^q$ and in the unitary case $F(c) = c^{-q}$, and the claim follows.
\end{proof}

\begin{lemma}\label{lemma:a-greater-r}
    Assume the hypotheses of Lemma~\ref{lemma:An-adjoint}. Let $\varepsilon = +1$ in the split case and $\varepsilon = -1$ in the unitary case, set $a = q - \varepsilon$. Let $G = \PGL_r(q)$ in the split case and $G = \PGU_r(q)$ in the unitary case. If $a > r$, then one of the following holds.
    \begin{itemize}
        \item[(1)] $F$ fixes the $r$ eigenlines of a lift of $t$, and $t \in \cU$.
        \item[(2)] $F$ acts transitively on these eigenlines, \[
            |T| = \frac{q^r - \varepsilon ^r}{q - \varepsilon},
        \]
        $|C_G(t) : T| = r$, and there exists $y\in T$ such that $C_G(y) = T$. Moreover, $y\sim t$.
        
    \end{itemize}
\end{lemma}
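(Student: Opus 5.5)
The plan is to work with a lift $\hat t\in\SL_r$ of $t$, as in the proof of Lemma~\ref{lemma:An-adjoint}. Since $\hat t^r=\lambda$ is a scalar and $\hat t$ has $r$ distinct eigenvalues, the eigenvalues are exactly $\mu\zeta^i$ for $0\le i<r$, where $\zeta$ is a primitive $r$th root of unity. This set is a torsor under $\mu_r$. Because $F(\hat t)=b\hat t$ for a scalar $b$, the map $F$ permutes the $r$ eigenlines of $\hat t$. I will check that this permutation is induced by $\mu\zeta^i\mapsto$ (a fixed $F$-twist of $\mu\zeta^i$) times a fixed scalar. In the coordinates $i\in\Z/r$ it is therefore an affine map $i\mapsto \pm q\, i+k$. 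Since $r\mid a=q-\varepsilon$, the linear coefficient is $1$, so the permutation is a translation $i\mapsto i+k$. As $r$ is prime, either $k=0$, in which case $F$ fixes every eigenline, or the permutation is an $r$-cycle. This gives the dichotomy in the statement.

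\emph{Transitive case.} Here $\widehat\sT$ is the Coxeter-type torus, so $T=\sT^F$ is the image of the norm-one part, and its order is $(q^r-\varepsilon^r)/(q-\varepsilon)$; I will read this off from the $F$-structure of $\widehat\sT$ via \cite[Prop. 25.3]{MalleTesterman2011}. By Lemma~\ref{lemma:semisimple-component-group}, $C_G(t)/T$ embeds in $\mu_r$. It is nontrivial because $C_G(t)>A=T$, so $|C_G(t):T|=r$.

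For $y$ I will take the image in $T$ of an element $\lambda\in\F_{q^r}^\times$ (or its unitary analogue) whose conjugates $\lambda^{q^j}$ are pairwise distinct modulo $\F_q^\times$. Such an element is regular. Moreover, no element of $N_G(\sT)^F\setminus T$ can centralize it, since such an element would force $\lambda^{q^j-1}$ to be a scalar for some $0<j<r$. The elements that fail this condition satisfy $\lambda^{(q^j-1)(q-\varepsilon)}=1$ for some $j$, and a counting argument shows they do not exhaust $T$. The hypothesis $a>r$ should be exactly what makes this count work, and this is the step I expect to be the most delicate. Once $C_G(y)=T$ holds, $|C_G(y)|$ divides $|C_G(t)|=r|T|$, which gives the edge $y\sim t$.

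\emph{Split-eigenline case.} Here $T$ is the image of the diagonal torus, of order $a^{r-1}$. The goal is to connect $t$ to some $y\in T$ that is nonregular and noncentral, for example the image of $\mathrm{diag}(x,1,\dots,1)$. Such a $y$ lies in $\cU$ by Proposition~\ref{prop:noncent-nonreg-ss--in-U}. If $a$ has a prime divisor $p\ne r$, I choose $x$ of order $p$. Then $y$ and $t$ commute in the abelian group $T$ and have coprime orders, so Lemma~\ref{lemma:4-AIN} gives $t\sim y\in\cU$. The main obstacle is the case $a=r^k$; since $a>r$, this forces $k\ge2$. There I would try an element $z\in T$ of order $r^2$ with $z^r$ chosen so that $t\sim z^r$, or alternatively verify directly that $|C_G(t)|=r\,a^{r-1}$ divides $|C_G(y)|$ for a suitable nonregular $y$ (for instance one with a $2$-dimensional eigenspace), using that $a^{r-1}$ carries a surplus power of $r$. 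In either case this places $t$ in $\cU$.
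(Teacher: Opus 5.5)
The $a=r^k$ case is a genuine gap: you leave it open, and neither suggestion closes it. The rest is sound. Your translation argument for the dichotomy is correct and more explicit than the paper's: $F$ moves the eigenlines by $i\mapsto i+k$ on $\Z/r$ because $\pm q\equiv 1\pmod r$. In the split-eigenline case, your coprime-order trick is a real shortcut when $a$ has a prime divisor $p\neq r$.

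Now take $a=r^k$ with $k\ge 2$. Then $|C_G(t)|=r\,a^{r-1}=r^{1+k(r-1)}$, so $C_G(t)$ is an $r$-group. Nothing of coprime order commutes with $t$, and Lemma~\ref{lemma:4-AIN} yields only powers of $t$. Your first idea assumes an edge $t\sim z^r$ of exactly the kind you are trying to produce.

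Your second idea cannot work. As in Lemma~\ref{lemma:5-8}, $v_r(|G|)=\sum_{i=2}^{r}\bigl(k+v_r(i)\bigr)=1+k(r-1)$, so $P=C_G(t)$ is a Sylow $r$-subgroup. If $|C_G(t)|$ divided $|C_G(y)|$, a conjugate of $y$ would lie in $C_G(P)=C_T(w)$. The nonidentity elements of $C_T(w)$ have $r$ distinct projective eigenvalues, so they are regular. Concretely, the image $y$ of $\operatorname{diag}(x,x,1,\dots,1)$ has $v_r(|C_G(y)|)=k(r-1)$, exactly one short; the ``surplus'' power of $r$ is not there.

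The edge out of $t$ has to go downward, and that is what the paper does; this is where $a>r$ is really used. Write $a=rm$ with $m\ge 3$. The paper takes $y\in T$ with eigenvalue exponents $E=\{2m,3m,\dots,(r-1)m,1,m-1\}\subseteq\Z/a\Z$. Since $E$ is not a coset of $\langle m\rangle$, no nontrivial scalar preserves the spectrum, so $C_G(y)=T$. Then $|T|$ divides both $|C_G(t)|=r|T|$ and $|C_G(z)|$ for the nonregular $z=\overline{\operatorname{diag}(\omega,\omega,1,\dots,1)}$. Hence $t\sim y\sim z\in\cU$.

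Alternatively, you can rescue your own argument by showing the bad case is vacuous in the paper's scope. First, $q=r^k+1$ with $k\ge 2$ is impossible. For odd $k$, $r^k+1$ is $r+1$ times an odd number greater than $1$. For even $k$, $r^k+1\equiv 2\pmod 8$. Second, $q=r^k-1$ with $k\ge 2$ forces $k=2$, $r=3$, $q=8$. Factor it as $(r-1)$ times an odd number when $k$ is odd, or as $(r^{k/2}-1)(r^{k/2}+1)$ when $k$ is even. But here $r=n=\ell+1\ge 4$ is an odd prime, so $r\ge 5$.

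Finally, the ``delicate count'' in your transitive case is trivial and has nothing to do with $a>r$. Let $\sigma$ generate $N_G(T)/T\cong C_r$. The elements of $T$ fixed by some nonidentity element of this quotient form the single subgroup $C_T(\sigma)$. It is proper because $\sigma$ acts nontrivially on $T$. So any generator of the cyclic group $T$ works, which is the paper's choice.
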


\begin{proof}
    By Lemma~\ref{lemma:An-adjoint}, $r\mid a$, so we may write $a = rm$, with $m > 1$. Since $a = q - \varepsilon$ is odd, $m$ is at least $3$. Consider the cyclic group $K = \langle \omega \rangle$ of order $a$, where $K = \F_q^\times$ in the split case and $K = \{x\in \F_{q^2}^\times \mid x^{q+1} = 1\}$ in the unitary case. Then $\langle \omega^m\rangle \le \langle \omega\rangle$ is a subgroup of order $r$. 

        Let
    \[
        \pi:\SL_r(k)\longrightarrow \sG
    \]
    be the simply connected covering isogeny, and choose
    $\hat t\in\SL_r(k)$ such that $\pi(\hat t)=t$. By the preceding discussion,
    $\hat t^r$ is scalar and $\hat t$ has $r$ distinct eigenvalues,
    which may be written as $\lambda\mu_r$ for some
    $\lambda\in k^\times$. Since $r\mid a$, we have
    $\mu_r=\langle\omega^m\rangle\le K$. Take $h\in C_G(t)$ and choose a lift $\hat h\in\SL_r(k)$. Then $\hat h\hat t\hat h^{-1}=c_h\hat t$ for some scalar $c_h \in\mu_r$. The scalar $c_h$ is independent of the choices of lifts, so $ h\longmapsto c_h$ defines a homomorphism $C_G(t)\to\mu_r$. Its kernel is $T$. Indeed, $c_h=1$ if and only if $\hat h$ centralizes $\hat t$, and the centralizer of $\hat t$ in
    $\SL_r(k)$ is the maximal torus lying above $\sT$. Therefore
    \[
        C_G(t)/T\hookrightarrow\mu_r.
    \]
    Since $C_G(t)>T$, this quotient is nontrivial, and hence $C_G(t)/T\cong C_r$ and $|C_G(t)|=r|T|$.

    By the preceding discussion, $F$ acts on the eigenlines of
    $\hat t$ either trivially or as an $r$-cycle.

    Suppose first that $F$ fixes every eigenline of $\hat t$.     Let $\sT_0$ be the standard diagonal $F$-stable maximal torus of $\sG$. We may choose $g\in\sG$ such that $\sT=g\sT_0g^{-1}$. Since $\sT$ is $F$-stable, it follows that $g^{-1}F(g)\in N_{\sG}(\sT_0)$. Under the identification $N_{\sG}(\sT_0)/\sT_0\cong S_r$ of the Weyl group, the image of $g^{-1}F(g)$ is the permutation induced by $F$ on the eigenlines of $\hat t$. Using that $F$ fixes every eigenline, it follows that $g^{-1}F(g)\in\sT_0$. Since $\sT_0$ is connected, the Lang--Steinberg Theorem gives an element $h\in\sT_0$ such that
    \[
        h^{-1}F(h)=\bigl(g^{-1}F(g)\bigr)^{-1}.
    \]
    It follows that $gh\in G$ and
    \[
        \sT=(gh)\sT_0(gh)^{-1}.
    \]
    Thus, after conjugating by an element of $G$, we may assume that
    $T=\sT^F$ is the standard diagonal split torus in the split case
    and the standard diagonal unitary torus in the unitary case. Now define
    \[
        E=\{2m,3m,\dots,(r-1)m,1,m-1\}\subseteq\mathbb Z/a\mathbb Z,
    \]
    which has $r$ distinct elements. Let $\hat y$ be the diagonal
    matrix with diagonal entries $\omega^e$, for $e\in E$, in any
    order. Then $\hat y\in\GL_r(q)$ in the split case and
    $\hat y\in\GU_r(q)$ in the unitary case, and the image $y$ of $\hat y$ in $G$ belongs to $T$.

    The matrix $\hat y$ has $r$ distinct eigenvalues, so $y$ is
    regular and semisimple. We claim that $C_G(y)=T$. Take
    $h\in C_G(y)$, and choose a lift $\hat h\in\GL_r(q)$ in the
    split case or $\hat h\in\GU_r(q)$ in the unitary case. Then $\hat h\hat y\hat h^{-1}=d\hat y$ for some scalar $d\in K$. Write $d=\omega^\delta$. Since multiplication by $d$ preserves the eigenvalue set, we have $E+\delta=E$. If $\delta\ne0$, translation by $\delta$ has orbits of length dividing $|E|=r$. Since a nonzero translation has no fixed points and $r$ is prime, $\delta$ has order $r$ and $E$ is a coset of the unique subgroup $\langle m\rangle$ of order $r$. This is impossible, since the elements of such a coset are all congruent modulo $m$, whereas $E$ contains both $1$ and $m-1$. Thus $\delta=0$, so $d=1$. Since $\hat y$ has distinct
    eigenvalues, $\hat h$ is diagonal. Hence $C_G(y)=T$.

    Let $z\in T$ be the image of $\operatorname{diag}(\omega,\omega,1,\dots,1)$. Then $z$ is noncentral, semisimple, and nonregular. Since
    \[
        |C_G(y)|=|T|\mid |C_G(t)|,
    \]
    we have $t\sim y$. Also,
    \[
        T=C_G(y)\le C_G(z),
    \]
    so $y\sim z$. By
    Proposition~\ref{prop:noncent-nonreg-ss--in-U}, we have
    $z\in\cU$, and therefore $t\in\cU$.

    Suppose now that $F$ acts as an $r$-cycle on the eigenlines of
    $\hat t$. Then $T$ is the irreducible rational torus, and
    \[
        |T|=\frac{q^r-\varepsilon^r}{q-\varepsilon},
    \]
    by \cite[Prop.~25.3(c),~Ex.~25.8]{MalleTesterman2011}. Choose a generator $y$ of the cyclic group $T$. Since the torus
    is irreducible and $r$ is prime, $y$ is regular semisimple.
    Hence $C_G(y)\le N_G(T)$. Moreover, the Weyl group is
    \[
        N_G(T)/T\cong C_r,
    \]
    and every nonidentity element of this quotient acts
    nontrivially on a generator of $T$. It follows that
    \[
        C_G(y)=T.
    \]
    Therefore
    \[
        |C_G(y)|=|T|\mid r|T|=|C_G(t)|,
    \]
    and hence $t\sim y$.
\end{proof}

\begin{lemma}\label{lemma:5-8}
    Assume the hypotheses of Lemma~\ref{lemma:An-adjoint}, and
    retain the notation of Lemma~\ref{lemma:a-greater-r}. Suppose that $a=r$ and
    that $F$ fixes every eigenline of $\hat t$. Then
    $|C_G(t)|=r^r$, and the associated vertex in $\cD(G)$ is
    \emph{isolated}.
\end{lemma}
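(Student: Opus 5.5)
The plan is to compute $|C_G(t)|$ exactly, show that it equals the full $r$-part of $|G|$, and then rule out any other centralizer order that divides it or is divisible by it.

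\textbf{Step 1: the arithmetic.} In the split case $q-1=r$, and in the unitary case $q+1=r$. Since $F$ fixes every eigenline of $\hat t$, the conjugation argument in the proof of Lemma~\ref{lemma:a-greater-r} lets me take $T$ to be the standard diagonal (split or unitary) torus of $G=\PGL_r(q)$ or $\PGU_r(q)$. Then $|T|=(q-\varepsilon)^{r}/(q-\varepsilon)=r^{r-1}$. The proof of Lemma~\ref{lemma:a-greater-r} also gives $|C_G(t):T|=r$, so $|C_G(t)|=r^r$.

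\textbf{Step 2: $r^r$ is the full $r$-part of $|G|$.} We have $|G|=q^{\binom r2}\prod_{i=2}^r(q^i-\varepsilon^i)$ and $q\equiv \varepsilon \pmod r$. By the lifting-the-exponent lemma, $v_r(q^i-\varepsilon^i)=1+v_r(i)$. Hence $v_r(|G|)=(r-1)+1=r$, and $P:=C_G(t)$ is a Sylow $r$-subgroup of $G$.

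\textbf{Step 3: no centralizer order is a proper multiple of $r^r$.} Let $x$ be noncentral with $r^r\mid |C_G(x)|$. Then $C_G(x)$ contains a Sylow $r$-subgroup, which after conjugation is $P$, so $x\in C_G(P)\le C_G(T)$. Since $T$ contains regular elements (for instance $t$), $C_G(T)=T$. So $x\in T$ is fixed by the element of $P\setminus T$, which acts on $T$ by an $r$-cycle on the diagonal entries modulo scalars. The fixed points of this action are the images of diagonal matrices whose eigenvalues form a coset of $\mu_r=\F_q^\times$ (respectively the unitary analogue), and these form $\langle t\rangle$. Thus $x$ is a power of $t$, and $|C_G(x)|=r^r$ by the discussion preceding Lemma~\ref{lemma:ss-odd-nonreg}.

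\textbf{Step 4: no noncentral element has centralizer order a proper divisor of $r^r$.} This is the main obstacle. Suppose $|C_G(x)|$ divides $r^r$.
\begin{itemize}
\item The order is odd, so $x$ is not unipotent and (by the Jordan decomposition) $x$ is semisimple.
\item By Lemma~\ref{lemma:nr-ss--unip}, $x$ is regular; otherwise $C_G(x)$ would contain a nontrivial unipotent element and have even order.
\item The maximal torus $T_x=C_\sG(x)^{\circ F}$ then has order a power of $r$.
\end{itemize}
Next I use the standard orders of maximal tori of $\PGL_r(q)$ and $\PGU_r(q)$: they are $\frac{1}{q-\varepsilon}\prod_j\bigl(q^{d_j}-\varepsilon^{d_j}\bigr)$ over partitions $(d_j)$ of $r$. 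For a part $d\ge2$, the factor $(q^d-\varepsilon^d)/(q-\varepsilon)$ is congruent to $d$ modulo $r$ (up to sign). Such a factor can be a power of $r$ only if $d=r$, in which case its $r$-part is exactly $r$ while the factor itself exceeds $r$. So $T_x$ is conjugate to the diagonal torus $T$, and we may take $x\in T$.

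Now $x$ is regular, so a lift of $x$ in $\GL_r(q)$ (respectively $\GU_r(q)$) has $r$ distinct eigenvalues in the cyclic group $K$ of order $r$. These eigenvalues are therefore exactly the elements of $K$. Consequently the cyclic permutation matrix multiplies this lift by a scalar and centralizes $x$ modulo scalars. This gives $|C_G(x)|\ge r|T|=r^r$, so $|C_G(x)|=r^r$.

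Together, Steps 3 and 4 show that the only vertex comparable under divisibility with $r^r$ is $r^r$ itself. Hence the vertex $r^r$ is isolated in $\cD(G)$.
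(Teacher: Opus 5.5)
Your proposal is correct and follows the paper's proof essentially step for step. The matching steps are:
\begin{itemize}
\item the computations $|T|=r^{r-1}$ and $|C_G(t):T|=r$;
\item the valuation $v_r(|G|)=r$, which makes $P=C_G(t)$ a Sylow $r$-subgroup;
\item in the multiple case, the reduction to $x\in C_G(P)$ and the count showing the fixed points of the $r$-cycle on $T$ form a group of order $r$;
\item in the divisor case, the partition analysis of maximal tori, where you actually justify more carefully than the paper why $|C_G(x)|$ reaches $r^r$.
\end{itemize}

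Two small wording fixes are needed. First, $C_G(T)=T$ does not follow merely from $T$ containing a regular element: $t$ itself is regular with $C_G(t)>T$. It follows instead from $C_G(T)\le C_G(t)=P$ together with the nontriviality of the $r$-cycle action on $T$, which your own fixed-point count establishes. Second, in Step 4 you need a suitable $r$-cycle permutation matrix, namely one sending the eigenline for $\lambda$ to that for $\omega\lambda$, rather than an arbitrary fixed cyclic permutation.
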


\begin{proof}
    Since $F$ fixes every eigenline of $\hat t$, the same argument as in the proof of Lemma~\ref{lemma:a-greater-r} allows
    us to assume that $T$ is the standard diagonal torus. Taking the notation of that proof, $|K|=r$, and, since $\sG$ is of adjoint type, $ T\cong K^r/K$. Hence $|T|=r^{r-1}$. Moreover, we have seen that $C_G(t)/T\hookrightarrow\mu_r$.
    Since $C_G(t)>T$, it follows that $C_G(t)/T\cong C_r$. Let $w$ be the image of the permutation matrix which cyclically permutes the $r$ eigenlines of $\hat t$. Then $w$ centralizes $t$ projectively, and the same scalar-commutator calculation gives $C_G(t)=T\rtimes\langle w\rangle$. Therefore $|C_G(t)|=r|T|=r^r$.

    Set $P=C_G(t)$. We claim that $P$ is a Sylow $r$-subgroup. This follows from computing $v_r(|G|) = r$ from the formula\[
        |G| = q^{r(r-1)/2}\prod_{i = 2}^r (q^i - \varepsilon^i).
    \]
    Indeed, since $q-\varepsilon=r$, we have $ v_r(q^i-\varepsilon^i)=1+v_r(i)$ for $2\le i\le r$, and hence
    \[
        v_r(|G|)=\sum_{i=2}^r v_r(q^i-\varepsilon^i)
        =\sum_{i=2}^r (1+v_r(i))=(r-1)+1=r.
    \]

    Now we will argue that $t$ affords an isolated vertex. Take $x\in G\setminus Z(G)$ with $|x^G| \ne |t^G|$, and suppose that the vertex afforded by $x$ is adjacent to the vertex afforded by $t$.

    In the first case, we suppose that $|C_G(x)| \mid |P|$, in which case $C_G(x)$ is an $r$-group, and hence $|x|$ is odd. By Lemma~\ref{lemma:2-1}, the unipotent part $x_u$ is trivial, so $x$ is semisimple. If $x$ is not regular, then by Lemma~\ref{lemma:nr-ss--unip}, there is a nontrivial unipotent element in $C_G(x)$, a contradiction since this would have order a power of $2$. Thus $x$ is regular. Set $\sS = C_\sG(x)^\circ$, an $F$-stable maximal torus. Moreover, $\sS^F\le C_G(x)$, so $\sS^F$ has $r$-power order. By the parametrization of $G^F$-classes of $F$-stable maximal tori by $\phi$-conjugacy classes in the Weyl group \cite[Prop.~25.1]{MalleTesterman2011}, together with the description in terms of cycle types \cite[Example~25.4(2)]{MalleTesterman2011}, the $F$-stable maximal tori in $\PGL_r(q)$ and $\PGU_r(q)$ are parametrized by partitions $r=n_1+\cdots+n_m$, with fixed-point order
    \[
        \frac{\prod_i(q^{n_i}-\varepsilon^{n_i})}{q-\varepsilon}.
    \]
    Since $q-\varepsilon=r$, if some $n_i>1$, then the factor
    \[
        \frac{q^{n_i}-\varepsilon^{n_i}}{q-\varepsilon}
    \]
    has a prime divisor different from $r$. Indeed, if $n_i<r$, then this quotient is congruent to $\varepsilon^{n_i-1}n_i\not\equiv 0\pmod r$, and is greater than $1$. If $n_i=r$, then the same calculation gives
    \[
        v_r\left(\frac{q^r-\varepsilon^r}{q-\varepsilon}\right)=1,
    \]
    while the quotient is greater than $r$. Thus $\sS$ is a diagonal torus. Again, the same style of argument shows that since $x$ is regular and semisimple in $\sS$, there are $r$ distinct projective eigenvalues, hence the relevant scalar group $K$ has $|K| = r$, and $|\sS^F| = |K^r / K| = r^{r-1}$, and thus $|C_G(x)| = r^r = |P|$. In this case, $x$ gives rise to the same vertex in $\cD(G)$ as $t$.

    Next, we suppose that $|P| \mid |C_G(x)|$. By previous discussion, write $P = T\rtimes \langle w\rangle$, where $T\cong K^r / K$, and $w$ acts on $T$ by cyclically permuting the diagonal entries. Now, $C_G(x)$ contains a Sylow $r$-subgroup of $G$, thus, conjugating $x$ if necessary, we may assume that $P\le C_G(x)$, and therefore $x\in C_G(P)$. Since $t\in P$, $C_G(P) \le C_G(t) = P$, and therefore elements of $C_G(P)$ have the form $aw^i$ for $a\in T$ and $0\le i<r$, and also note that $aw^i$ centralizes $T\le P$. As $T$ is abelian, this implies that $w^i$ centralizes $T$. But $w$ acts on $T$ faithfully, so $w^i$ acts on $T$ trivially if and only if $i = 0$, so $C_G(P)\le T$. Therefore $C_G(P) = C_T(P) = C_T(w)$.

    Now an element of $T$ may be represented by $(a_0,\dots, a_{r-1})\in K^r$, with equivalence given by scalars. Such a representative is a representative of an element centralizing $w$ exactly if it is of the form $(1,c,c^2,\dots, c^{r-1})$, and indeed it follows that $|C_T(w)| =r$. If $x$ is a noncentral element of $C_G(P) = C_T(w)$, then the parameter $c$ is not $1$. Thus, the same argument used earlier shows that $x$ has $r$ distinct projective eigenvalues, and $C_G(x) = P$. Thus, $x$ does not give rise to a vertex different from $t$. In either divisibility case, every vertex adjacent to the vertex afforded by $t$ is equal to it. Therefore the vertex afforded by $t$ is isolated.
\end{proof}

\begin{lemma}\label{lemma:irreducible-two-vertex}
    Assume the hypotheses and notation of
    Lemma~\ref{lemma:An-adjoint}, and suppose that $F$ acts as
    an $r$-cycle on the eigenlines of $\hat t$. Set
    \[
        N=\frac{q^r-\varepsilon^r}{q-\varepsilon}.
    \]
    Then $|C_G(t)|=rN$, and there exists $y\in G$ such that $|C_G(y)|=N$. Moreover,
    \[
        \left\{\frac{|G|}{rN},\frac{|G|}{N}\right\}
    \]
    is a connected component of $\cD(G)$.
\end{lemma}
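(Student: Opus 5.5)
The first two claims come essentially for free from the earlier lemmas. By the argument in the proof of Lemma~\ref{lemma:a-greater-r}, the homomorphism $h\mapsto c_h$ embeds $C_G(t)/T$ into $\mu_r$. This holds regardless of whether $a>r$: that hypothesis was only needed to put $\mu_r$ inside $K$, and here $r\mid a$ by Lemma~\ref{lemma:An-adjoint}. Since $C_G(t)>T$, we get $|C_G(t)|=r|T|$. When $F$ acts as an $r$-cycle, $T$ is the irreducible (Coxeter) torus and $|T|=N$ by \cite[Prop.~25.3(c)]{MalleTesterman2011}. We take $y$ to be a generator of the cyclic group $T$. Exactly as in case (2) of Lemma~\ref{lemma:a-greater-r}, $y$ is regular, $C_G(y)\le N_G(T)$, and nontrivial elements of $N_G(T)/T\cong C_r$ move $y$. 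Hence $C_G(y)=T$ and $|C_G(y)|=N$, and the two vertices $|G|/rN$ and $|G|/N$ are adjacent.

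The main work is to show that nothing else is adjacent to either vertex. The plan is to take a noncentral $x$ with $|C_G(x)|$ dividing, or divisible by, $N$ or $rN$, and show $|C_G(x)|\in\{N,rN\}$. I would first record the arithmetic. $N$ is odd. Also $r\nmid N/r$, since $v_r(N)=1$ (the same computation as in Lemma~\ref{lemma:5-8}, using $r\mid q-\varepsilon$). Every primitive prime divisor $p$ of $q^r-\varepsilon^r$ (in the unitary case, of $q^{2r}-1$), which exists by Zsigmondy, divides $N$. Moreover such a $p$ divides $|G|$ only through the single factor $q^r-\varepsilon^r$, so a Sylow $p$-subgroup of $G$ is cyclic and lies in a conjugate of $T$.

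For the divisibility $|C_G(x)|\mid rN$: the centralizer is odd, so $x$ is semisimple by Lemma~\ref{lemma:2-1}. It is regular by Lemma~\ref{lemma:nr-ss--unip}, since otherwise $C_G(x)$ would contain a nontrivial unipotent element of $2$-power order. So $C_G(x)\ge S:=(C_\sG(x)^\circ)^F$, a maximal torus of order $\prod_i(q^{n_i}-\varepsilon^{n_i})/(q-\varepsilon)$ for a partition $(n_i)$ of $r$. I would show that $|S|$ divides $rN$ only for the partition $(r)$. For any other partition, all $n_i<r$, so the factors are coprime to the primitive divisors. I then need $|S|\mid r$, which fails because $|S|\ge (q-\varepsilon)^{r-1}/(\ldots)$ is too large, except in tiny cases such as $q-\varepsilon=r$ with $S$ split of order $r^{r-1}$. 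That exception is ruled out since $r^{r-1}\nmid rN$ as $v_r(N)=1$ and $r\ge 3$. Hence $S$ is a Coxeter torus and $|C_G(x)|\in\{N,rN\}$.

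For the divisibility $N\mid |C_G(x)|$: pick a primitive prime divisor $p$. Then $C_G(x)$ contains an element of order $p$, and after conjugating it lies in $T$. I would take the whole $p$-part, a Sylow $p$-subgroup $P_p\le T$, which is contained in $C_G(x)$ after conjugation. Then $x\in C_G(P_p)$. I would argue that a nontrivial $p$-element of $T$ is regular, since it acts irreducibly, with eigenvalues that are $r$ distinct Galois conjugates. Hence $C_G(P_p)\le C_G(\text{that element})\le N_G(T)$, and centralizing forces $C_G(P_p)=T$ (elements of $N_G(T)\setminus T$ act nontrivially on order-$p$ elements). So $x\in T$ is noncentral. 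Since $T$ is irreducible and $r$ is prime, any noncentral $x\in T$ is regular with $C_G(x)^\circ$-fixed points equal to $T$, so again $|C_G(x)|\in\{N,rN\}$ by the component-group bound $|C_G(x)/T|\mid r$ from Lemma~\ref{lemma:semisimple-component-group}.

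I expect the main obstacle to be the torus-order comparison in the first divisibility direction, namely excluding nonprincipal partitions uniformly in $q$ and $r$, including small $q$ and the unitary case. Zsigmondy exceptions such as $q=2$, $\varepsilon=-1$ with $2r=6$ (giving $r=3$ and $q+1=3$) will need separate inspection. My first attempt would be a crude size and $r$-valuation bound, with the leftover cases checked directly.
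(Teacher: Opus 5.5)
Your plan follows the paper's proof in all essentials. It uses the embedding $C_G(t)/T\hookrightarrow\mu_r$, a generator $y$ of the Coxeter torus with $C_G(y)=T$, and the same two cases. When $N\mid |C_G(x)|$, a Zsigmondy prime forces $x$ into a conjugate of $T$. When $|C_G(x)|\mid rN$, oddness and regularity reduce to a partition analysis of the torus $C_{\sG}(x)^{\circ}$. Three details need repair, none of which changes the strategy.

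\emph{The bound on $r$.} In the paper's scope $n=\ell+1\ge 4$, so $r=n\ge 5$. This makes your Zsigmondy worry ($q=2$, $\varepsilon=-1$, $r=3$) moot, and you genuinely need it. Your claim that $r^{r-1}\nmid rN$ for $r\ge 3$ is false at $r=3$, since $v_3(9)=2=v_3(3N)$. For example, in $\mathrm{PGL}_3(4)$ the split torus has order $9$, which divides $3N=63$. The paper's inequality $e(r-1)>2$ uses exactly $r\ge 5$.

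\emph{The divisibility of $|S|$.} Knowing that $|S|$ is prime to every primitive prime divisor only gives $|S|\mid r^2$, the $r$-part of $rN$, not $|S|\mid r$. Even this needs the fact that every prime divisor of $N$ other than $r$ is primitive, which you did not justify. The paper supplies this arithmetic by proving $\gcd(f_n,N)=1$ for $1<n<r$: a common prime $p$ must divide $\varepsilon q-1$, and then $p\mid\gcd(n,r)=1$. It also uses $\gcd(q-\varepsilon,N)=r$. With these, a part $1<n_i<r$ is excluded because $f_{n_i}>1$ is prime to $rN$. The partition $(1^r)$ is excluded because $\gcd(q-\varepsilon,N)=r$ and $v_r(rN)=2$ force $(q-\varepsilon)^{r-1}\mid r^2$, which is impossible for $r\ge 5$.

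\emph{The component-group citation.} Lemma~\ref{lemma:semisimple-component-group} assumes $x^r\in Z(\sG)$, which an arbitrary noncentral $x\in T$ need not satisfy. Use instead $T\le C_G(x)\le N_G(T)$ with $N_G(T)/T\cong C_r$, as you already did for $y$ and as the paper does.
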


\begin{proof}
    Since $F$ acts transitively on the $r$ eigenlines of $\hat t$,
    the corresponding rational maximal torus has cycle type $(r)$.
    Thus $T=\sT^F$ is cyclic of order
    \[
        |T|=N=\frac{q^r-\varepsilon^r}{q-\varepsilon}.
    \]

    As before, we get $ C_G(t)/T\hookrightarrow\mu_r$. Since $C_G(t)>T$, this quotient is nontrivial. Therefore $ C_G(t)/T\cong C_r$ and hence $|C_G(t)|=rN$.

    Let $y$ be a generator of the cyclic group $T$. Since $y\in T$, it is semisimple. We claim that $y$ is regular. Suppose that a lift $\hat y$ has equal eigenvalues on two distinct eigenlines. Applying successive powers of $F$ preserves this equality. Since $F$ acts as an $r$-cycle and $r$ is prime, it follows that all eigenvalues of $\hat y$ are equal. Thus $\hat y$ is scalar, so $y=1$ in $G$, contradicting the fact that $y$ generates $T$. Therefore $\hat y$ has $r$ distinct eigenvalues, and $y$ is regular semisimple.

    Moreover, $N_G(T)/T\cong C_r$. The nonidentity elements of this quotient act nontrivially on $T$, and hence cannot fix a generator of $T$. Since $y$ is regular, every element of $C_G(y)$ normalizes $T$. It follows
    that $C_G(y)=T$, so $|C_G(y)|=N$. Therefore the two vertices $\frac{|G|}{rN}$ and $\frac{|G|}{N}$ are distinct and adjacent.

    It remains to show that no other vertex is adjacent to either of them. Let $x\in G\setminus Z(G)$, and suppose that the vertex of $x$ is adjacent to one of these two vertices. Since $N\mid rN$, either $|C_G(x)|\mid rN$ or $N\mid |C_G(x)|$. 

    Suppose first that $ N\mid |C_G(x)|$.
    By Zsigmondy's theorem, there is a prime $\ell\mid N$ such that $\operatorname{ord}_{\ell}(q)=r$ in the linear case and $\operatorname{ord}_{\ell}(q)=2r$ in the unitary case. The exceptional cases in Zsigmondy's theorem do not occur because $r\ge5$. By Cauchy's theorem, $C_G(x)$ contains an element $z$ of order $\ell$.

    The element $z$ is regular semisimple and lies in an irreducible maximal torus $S$ of order $N$. No nonidentity element of $N_G(S)/S\cong C_r$ centralizes $z$, by the choice of $\ell$.
    Hence $ C_G(z)=S$. Since $x$ centralizes $z$, we have $x\in S$.

    We claim that $x$ is regular. Indeed, if a lift of $x$ had two equal eigenvalues, the transitivity of $F$ on the eigenlines and the fact that $r$ is prime would imply that all its eigenvalues were equal. Then $x$ would be scalar and hence central, contrary to the choice of $x$. Thus $x$ is regular semisimple.

    It follows that every element of $C_G(x)$ normalizes $S$, and
    therefore $S\le C_G(x)\le N_G(S)$. Since $N_G(S)/S\cong C_r$, it follows that $|C_G(x)|=N$ or $|C_G(x)|=rN$.

    Now suppose that $|C_G(x)|\mid rN$. Since $rN$ is odd, $C_G(x)$ has odd order. In particular, $x$ has odd order and is semisimple. If $x$ were nonregular, then Lemma~\ref{lemma:nr-ss--unip} would give a nonidentity unipotent element in $C_G(x)$, contradicting the oddness of
    $|C_G(x)|$. Thus $x$ is regular.

    Set $\sS=C_\sG(x)^\circ$. Then $\sS$ is an $F$-stable maximal torus and $ |\sS^F|\mid |C_G(x)|\mid rN$. Set $a=q-\varepsilon$. The torus $\sS$ corresponds to a
    partition $r=n_1+\cdots+n_k$, and
    \[
        |\sS^F|
        =
        \frac{\prod_{i=1}^k(q^{n_i}-\varepsilon^{n_i})}
             {q-\varepsilon}
        =
        a^{k-1}\prod_{i=1}^k f_{n_i},
    \]
    where
    \[
        f_n=\frac{q^n-\varepsilon^n}{q-\varepsilon}.
    \]

    We claim that $\gcd(f_n,N)=1$ for $1<n<r$. Set  $b=\varepsilon q$. Up to a sign, $f_n$ and $N=f_r$ are the
    quotients $\frac{b^n-1}{b-1}$ and $\frac{b^r-1}{b-1}$. Suppose that a prime $p$ divides both. If $p\nmid b-1$, then the order of $b$ modulo $p$ divides both $n$ and $r$, and hence divides $\gcd(n,r)=1$, a contradiction. If $p\mid b-1$, then $f_n\equiv n\pmod p$ and $f_r\equiv r\pmod p$, so $p\mid\gcd(n,r)=1$, again a contradiction. This proves the
    claim.

    Since $r\mid N$, it follows that $\gcd(f_n,rN)=1$ for $1<n<r$. Therefore no part $n_i$ can satisfy $1<n_i<r$, since
    $f_{n_i}>1$ divides $|\sS^F|$, while $|\sS^F|\mid rN$.

    Thus either the partition is $(r)$ or every $n_i=1$. In the
    latter case, $|\sS^F|=a^{r-1}$. We have $\gcd(a,N)=r$ and $v_r(N)=1$. If $a$ has a prime divisor different from $r$, then that prime does not divide $N$, so $a^{r-1}\nmid rN$. If instead $a=r^e$, then $v_r(a^{r-1})=e(r-1)>2=v_r(rN)$, since $r\ge5$. This is again impossible.

    Consequently, the partition must be $(r)$. Hence $\sS^F$ is an irreducible maximal torus of order $N$. Since $x$ is regular, $\sS^F\le C_G(x)\le N_G(\sS^F)$, and $ N_G(\sS^F)/\sS^F\cong C_r$.
    Therefore $|C_G(x)|=N$ or $|C_G(x)|=rN$.

    Thus every vertex adjacent to either $|G|/(rN)$ or $|G|/N$ is
    one of these two vertices. Hence
    \[
        \left\{\frac{|G|}{rN},\frac{|G|}{N}\right\}
    \]
    is a connected component of $\cD(G)$.
\end{proof}

The preceding lemmas of this section together prove the following.

\begin{prop}\label{prop:sec-5}
    Let $s\in G\setminus Z(G)$ be regular and semisimple, and suppose that $|C_G(s)|$ is odd and $C_G(s)>A$. Then one of the following holds.
    \begin{itemize}
        \item[(1)] The vertex $|s^G|$ lies in $\cU$.

        \item[(2)] Either $G=\PGL_r(q)$ and $r=q-1$, or $G=\PGU_r(q)$ and $r=q+1$, where $r$ is an odd prime. In this case, $|C_G(s)|=r^r$, and the vertex $|s^G|$ is isolated in $\cD(G)$.

        \item[(3)] We have $G=\PGL_r(q)$ or $G=\PGU_r(q)$, where $r$ is an odd prime dividing $q-\varepsilon$, with $\varepsilon=1$ in the linear case and $\varepsilon=-1$ in the unitary case. If $N=(q^r-\varepsilon^r)/(q-\varepsilon)$, then $|s^G|$ lies in the two-vertex component
        \[
            \left\{\frac{|G|}{rN},\frac{|G|}{N}\right\}
        \]
        of $\cD(G)$.
    \end{itemize}
\end{prop}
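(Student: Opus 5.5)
The proof assembles the lemmas of this section. Let $s$ be as in the statement. We first apply Lemma~\ref{lemma:ss-odd-nonreg}. In its first alternative $s\in\cU$, which is conclusion (1). Otherwise we obtain a power $t=s^m$ and an odd prime $r$ with the following properties:
\begin{itemize}
\item $t$ is noncentral, regular and semisimple;
\item $|C_G(t)|$ is odd and $C_G(t)>A$;
\item $\overline t$ has order $r$, and $r$ divides $|\Lambda(\sG)|$.
\end{itemize}
By Lemma~\ref{lemma:4-AIN}, $s\sim t$, so it suffices to locate $t$. We also note that $C_G(s)\le C_G(t)$ and that the two centralizers share the same $T$.

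Lemma~\ref{lemma:four-cases} restricts the type of $\sG$ to $A_\ell$, ${}^2A_\ell$, $E_6$ or ${}^2E_6$. Lemma~\ref{lemma:two-are-impossible-sc} excludes the simply connected case. Since the fundamental group of $E_6$ is $\Z/3$, a non-simply-connected $E_6$ group is adjoint, and Lemma~\ref{lemma:E6-adjoint--impossible} excludes it. We are therefore in type $A_{n-1}$ or ${}^2A_{n-1}$. Lemma~\ref{lemma:An-adjoint} then gives $n=r$ and $G=\PGL_r(q)$ or $\PGU_r(q)$, with $r\mid a=q-\varepsilon$.

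Next we split on $a>r$ versus $a=r$, and on how $F$ acts on the $r$ eigenlines of $\hat t$. As noted in Lemma~\ref{lemma:a-greater-r}, $F$ acts either trivially or as an $r$-cycle, since $F$ permutes the cyclic coset $\lambda\mu_r$ and $r$ is prime. The cases are as follows.
\begin{itemize}
\item If $a>r$ and $F$ fixes every eigenline, Lemma~\ref{lemma:a-greater-r}(1) gives $t\in\cU$, hence $s\in\cU$, which is (1).
\item If $a=r$ and $F$ fixes every eigenline, Lemma~\ref{lemma:5-8} gives $|C_G(t)|=r^r$ and shows the vertex of $t$ is isolated.
\item If $F$ acts as an $r$-cycle, for either $a>r$ or $a=r$, Lemma~\ref{lemma:irreducible-two-vertex} shows that $|t^G|$ lies in the two-vertex component $\{|G|/(rN),|G|/N\}$. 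This is (3).
\end{itemize}
In the $r$-cycle case, Lemma~\ref{lemma:irreducible-two-vertex} invokes Zsigmondy with $r\ge5$. We must therefore check that $r=3$ cannot occur, or handle it separately. For $a=r=3$ we get $q=4$ in the linear case and $q=2$ in the unitary case, and these small groups can be checked directly.

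It remains to transfer the conclusions about $t$ back to $s$.
\begin{itemize}
\item For conclusion (1), $s\sim t$ suffices.
\item For the isolated case we need $|s^G|=|t^G|$, that is, $|C_G(s)|=|C_G(t)|$. Since $|C_G(s)|$ divides $|C_G(t)|$, the two vertices are adjacent or equal, and isolation of $t$'s vertex forces equality. Hence $|C_G(s)|=r^r$ and $|s^G|$ is isolated, which is (2). Here $a=r$ means $r=q-1$ or $r=q+1$.
\item For the two-vertex case, $s\sim t$ places $|s^G|$ in the same component, which is (3).
\end{itemize}

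I expect the main obstacle to be making the case analysis exhaustive. We must confirm that Lemmas~\ref{lemma:5-8} and~\ref{lemma:irreducible-two-vertex} cover every configuration with $a=r$, and that the small prime $r=3$ is either excluded or checked directly. We must also confirm that the hypotheses of these lemmas, which are stated for $t$, are genuinely met.
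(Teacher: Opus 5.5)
Your argument is correct and follows the paper's proof essentially step for step. You use Lemma~\ref{lemma:ss-odd-nonreg} to pass to $t=s^m$, reduce the type to $\PGL_r(q)$ or $\PGU_r(q)$ with $r\mid q-\varepsilon$, split on the action of $F$ on the eigenlines and on $a>r$ versus $a=r$ via Lemmas~\ref{lemma:a-greater-r}, \ref{lemma:5-8} and~\ref{lemma:irreducible-two-vertex}, and then transfer back to $s$ using $s\sim t$, with equality of class sizes forced in the isolated case. Your worry about $r=3$ is unnecessary. The paper only treats types $A_\ell$ and ${}^2A_\ell$ with $\ell\ge 3$, so $r=n=\ell+1\ge 4$ and hence $r\ge 5$; this is why Lemma~\ref{lemma:irreducible-two-vertex} may assume $r\ge 5$, and no direct check of $\PGL_3(4)$ or $\PGU_3(2)$ (both outside the scope) is needed.
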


\begin{proof}
    By Lemma~\ref{lemma:ss-odd-nonreg}, either $|s^G|\in\cU$, or there exist a power $t=s^m$ and an odd prime $r$ such that $t$ is noncentral, regular, and semisimple, $|C_G(t)|$ is odd, $C_G(t)>A$, the element $tZ(G)$ has order $r$, and $r$ divides the order of the fundamental group of $\sG$. In the first case, conclusion~(1) holds, so assume that the second case occurs. By Lemmas~\ref{lemma:four-cases},~\ref{lemma:two-are-impossible-sc}, and~\ref{lemma:E6-adjoint--impossible}, together with Lemma~\ref{lemma:An-adjoint}, we have $G=\PGL_r(q)$ or $G=\PGU_r(q)$, where $r$ is an odd prime. Let $\varepsilon=1$ in the linear case and $\varepsilon=-1$ in the unitary case. Then $r\mid q-\varepsilon$. Set $a=q-\varepsilon$. Thus $a\ge r$. Since $t=s^m$, Lemma~\ref{lemma:4-AIN} gives $s\sim t$. It is therefore enough to determine the component containing the vertex afforded by $t$. The Frobenius endomorphism $F$ acts on the $r$ eigenlines of a lift $\hat t$ either trivially or as an $r$-cycle. Suppose first that $F$ acts as an $r$-cycle. Set $N=(q^r-\varepsilon^r)/(q-\varepsilon)$. By Lemma~\ref{lemma:irreducible-two-vertex}, the vertex afforded by $t$ lies in the component
    \[
        \left\{\frac{|G|}{rN},\frac{|G|}{N}\right\}.
    \]
    Since $s\sim t$, the vertex afforded by $s$ lies in the same component. Thus conclusion~(3) holds. Now suppose that $F$ fixes every eigenline of $\hat t$. If $a>r$, then Lemma~\ref{lemma:a-greater-r} gives $t\in\cU$. Since $s\sim t$, it follows that $s\in\cU$, so conclusion~(1) holds. It remains to consider the case $a=r$. By Lemma~\ref{lemma:5-8}, we have $|C_G(t)|=r^r$, and the vertex afforded by $t$ is isolated. Since $s\sim t$, the vertices afforded by $s$ and $t$ lie in the same component. Since the vertex afforded by $t$ is isolated, we must have $|s^G|=|t^G|$, and hence $|C_G(s)|=|C_G(t)|=r^r$. Since $a=q-\varepsilon=r$, we have $r=q-1$ in the linear case and $r=q+1$ in the unitary case. Thus conclusion~(2) holds.
\end{proof}

\section{Regular Semisimple Elements with $A=C_G(s)$}\label{sec:6}

In this section, we treat the remaining case of regular semisimple elements with $A=C_G(s)$. Throughout this section, write $Z=Z(G)$. Given a group $H$, a subgroup $T\le H$ is called a \emph{CC-subgroup} if $C_H(t)\le T$ for all $t\in T\setminus Z(H)$. In the setting of algebraic groups, if $\sT\le\sH$ is a torus, then $\sT$ is a \emph{CC-torus} if $C_\sH(t)=\sT$ for all $t\in\sT\setminus Z(\sH)$. This notion is due to \cite{williams1981primegraph}, in his work on prime graphs, and is used in \cite{AbdolghafourianIranmaneshNiemeyer2017LieType}, which was the source of inspiration for this paper and its methods. In light of the previous section, in which we considered the subgroup $A=TZ$, where $T=\sT^F$ for an $F$-stable maximal torus $\sT$ of $\sG$, we say that $A$ is a \emph{direct CC-torus} if $C_G(a)=A$ for all $a\in A\setminus Z$.

We also recall the \emph{prime element graph} $\Gamma(G)$ associated to a finite group $G$. Its vertices are the primes dividing $|G|$, and two distinct primes $p$ and $q$ are adjacent if and only if $G$ contains an element of order divisible by $pq$. The following facts are due to \cite{williams1981primegraph}.

\begin{lemma}[\cite{williams1981primegraph}]\label{lemma:williams}
    Suppose that $A=TZ>Z$ is a direct CC-torus, and that $|A/Z|$ is odd and coprime to $|Z|$. Set $\pi=\pi(|A/Z|)$. Then the following hold.
    \begin{itemize}
        \item[(1)] The subgroup $A/Z$ is a CC-subgroup of $G/Z$.
        \item[(2)] The set $\pi$ is a complete connected component of the prime graph $\Gamma(G/Z)$, and $2\notin\pi$.
        \item[(3)] Every nontrivial $\pi$-element of $G/Z$ is conjugate into $A/Z$.
    \end{itemize}
\end{lemma}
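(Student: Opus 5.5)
The plan is to pass to $\overline G=G/Z$ and $\overline A=A/Z\cong T/(T\cap Z)$, and to use that $\gcd(|\overline A|,|Z|)=1$. This coprimality lets us lift elements and centralizers without losing information. The key preliminary observation is that for $a\in A\setminus Z$ we have $C_{\overline G}(\overline a)\supseteq C_G(a)Z/Z=\overline A$. We also need the reverse inclusion. Let $\overline g$ centralize $\overline a$, so $gag^{-1}=az$ with $z\in Z$. Replace $a$ by a suitable power $a'$ having $\overline{a'}=\overline a$ and order coprime to $|Z|$; this is possible because $|\overline a|$ is coprime to $|Z|$, via the Chinese remainder theorem applied to $\langle a\rangle$. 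Then $ga'g^{-1}=a'z'$ with $z'\in Z$, and comparing orders forces $z'=1$. Hence $g\in C_G(a')=A$, using that $A$ is a direct CC-torus and $a'\notin Z$. This gives $C_{\overline G}(\overline a)=\overline A$ for all $\overline a\in\overline A\setminus 1$, which is (1).

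For (2), first note $2\notin\pi$ because $|\overline A|$ is odd. Let $p\in\pi$. Suppose $\overline x\in\overline G$ has order divisible by $pq'$ with $q'\notin\pi$. Then a suitable power $\overline y$ of $\overline x$ has order $p$, and we want to conjugate $\overline y$ into $\overline A$. Granting this for the moment, $\overline x\in C_{\overline G}(\overline y)=\overline A$ by (1), a contradiction since $q'\nmid|\overline A|$. Hence no edge leaves $\pi$. Connectivity of $\pi$ comes from $\overline A$ being abelian: $T$ is abelian, so $\overline A$ is abelian and contains elements of order $pp'$ for any $p,p'\in\pi$. Thus $\pi$ is a union of components and is itself connected, as required.

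The main obstacle is (3), conjugacy of $\pi$-elements into $\overline A$, which (2) also uses. The approach is a Sylow argument.
\begin{itemize}
\item First show that $\overline A$ is a Hall $\pi$-subgroup: for $p\in\pi$, pick $P\in\mathrm{Syl}_p(\overline G)$ containing a Sylow $p$-subgroup $\overline A_p$ of $\overline A$.
\item Take $1\ne\overline a\in Z(P)\cap\overline A_p$, which is nontrivial since $\overline A_p\trianglelefteq P$ as a nontrivial normal subgroup meets the center. Then $P\le C_{\overline G}(\overline a)=\overline A$, so $\overline A$ contains a full Sylow $p$-subgroup.
\item Now let $\overline x$ be a nontrivial $\pi$-element and $p$ a prime dividing its order. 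A power $\overline x_p$ of order $p$ lies in some Sylow $p$-subgroup, which is conjugate into $\overline A$. After conjugating, $\overline x_p\in\overline A$, and then $\overline x\in C_{\overline G}(\overline x_p)=\overline A$ by (1).
\end{itemize}
The delicate points are the Sylow-containment step and the lifting argument in (1), where the coprimality of $|\overline A|$ and $|Z|$ is essential.
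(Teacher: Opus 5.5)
The paper does not prove this lemma; it only attributes it to Williams, so your argument has to stand on its own. Most of it does. Your proof of (1), lifting $\overline a$ to a power $a'$ with $\overline{a'}=\overline a$ and order coprime to $|Z|$, is correct, and it is exactly where the hypothesis $(|A/Z|,|Z|)=1$ is used. You in fact prove the stronger statement $C_{\overline G}(\overline a)=\overline A$ for every nontrivial $\overline a\in\overline A$, and that stronger form is what the Sylow argument needs. The deduction of (2) from (1) and (3) is fine, including the clique property coming from $\overline A$ being abelian. So is the final step of (3).

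There is, however, one unjustified step. In the second bullet you assert that $\overline A_p\trianglelefteq P$. Nothing you have established gives this: a subgroup of a Sylow $p$-subgroup need not be normal in it. Here, normality of $\overline A_p$ in $P$ is essentially equivalent to the conclusion $P=\overline A_p$ you are trying to reach, so as written the step is circular.

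The repair needs only (1), not normality. Since $\overline A_p\le P$, the center $Z(P)$ centralizes $\overline A_p$, which is nontrivial because $p\in\pi$. Choose $1\ne\overline b\in\overline A_p$. By (1), $Z(P)\le C_{\overline G}(\overline b)=\overline A$. Hence $Z(P)\le\overline A_p$, the unique Sylow $p$-subgroup of the abelian group $\overline A$. Now take any $1\ne z\in Z(P)$; then $P\le C_{\overline G}(z)=\overline A$, which is what you wanted. Alternatively, run your center argument inside $N_P(\overline A_p)$, where normality does hold. This gives $N_P(\overline A_p)=\overline A_p$, and since proper subgroups of a $p$-group are properly contained in their normalizers, $P=\overline A_p$. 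With either fix, $\overline A$ contains a Sylow $p$-subgroup of $\overline G$ for every $p\in\pi$, and the rest of your proof goes through unchanged.
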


\begin{lemma}\label{lemma:aux}
    Suppose that $G$ has type $A_{n-1}$ or ${}^2A_{n-1}$, and let $a\in T$ be regular and semisimple. Suppose that $aZ$ has prime order $r$, and that $r\mid |Z|$ and $r\mid |T/Z|$. Then $n=r$, the group $\sG$ is simply connected, and $T$ is a split diagonal torus in the linear case and a diagonal unitary torus in the unitary case.
\end{lemma}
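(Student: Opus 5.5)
We argue by lifting to $\SL_n(k)$ and examining the eigenvalues of the lift, as in the proof of Lemma~\ref{lemma:An-adjoint}. Let $\pi:\SL_n(k)\to\sG$ be the simply connected covering isogeny. The context of Section~\ref{sec:6} gives $T=\sT^F$ with $\sT=C_\sG(a)^\circ$; we lift $a$ to $\hat a\in\widehat\sT=\pi^{-1}(\sT)^\circ$, which we take to be the diagonal torus. Regularity of $a$ is detected by the roots $\chi_{ij}$ pulled back to $\widehat\sT$, exactly as in Lemma~\ref{lemma:An-adjoint}, so $\hat a$ has $n$ distinct eigenvalues.

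\emph{Step 1: $n\le r$.} Since $a^r\in Z\le Z(\sG)$ by Lemma~\ref{lemma:5-2}, and $\pi^{-1}(Z(\sG))=Z(\SL_n)$, the power $\hat a^r$ is a scalar $\lambda$. Hence every eigenvalue of $\hat a$ is a root of $X^r-\lambda$, so $\hat a$ has at most $r$ eigenvalues and $n\le r$.

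\emph{Step 2: $r\mid n$ and $\sG$ is simply connected.} By Lemma~\ref{lemma:2-2}, $Z$ has odd order. Since $Z(G)\le Z(\sG)$ and $|Z(\sG)|$ divides $|Z(\SL_n)|=|\mu_n|$, the hypothesis $r\mid|Z|$ gives $r\mid n$. Combined with Step 1, $n=r$. Now $Z(\sG)\cong\mu_r/\ker\pi$ has order $1$ or $r$, and $r\mid|Z|$ forces $|Z(\sG)|=r$. Therefore $\ker\pi=1$, so $\sG=\SL_r(k)$ and $G=\SL_r(q)$ or $\SU_r(q)$. This also yields $Z=\mu_r$, and since $Z$ is $F$-fixed, $r\mid q-\varepsilon$.

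\emph{Step 3: $T$ is the split or unitary diagonal torus.} As in Lemma~\ref{lemma:a-greater-r}, $F$ permutes the $r$ eigenlines of $\hat a=a$ either trivially or as an $r$-cycle, because $\hat a^r$ is scalar, $F(a)=a$, and $r$ is prime. If $F$ acts as an $r$-cycle, then $T$ is the irreducible torus, $|T|=N=(q^r-\varepsilon^r)/(q-\varepsilon)$, and $T/Z$ has order $N/r$. In that case we use $v_r(N)=1$, which follows from the lifting-the-exponent computation already made in Lemma~\ref{lemma:5-8}: with $r\mid q-\varepsilon$ we get $v_r(q^r-\varepsilon^r)=v_r(q-\varepsilon)+1$. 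Thus $r\nmid|T/Z|$, contradicting the hypothesis. So $F$ fixes every eigenline, and the Lang--Steinberg argument from the proof of Lemma~\ref{lemma:a-greater-r} conjugates $T$ within $G$ to the standard split diagonal torus in the linear case, or the diagonal unitary torus in the unitary case.

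The main obstacle is Step 3, in particular the valuation computation, which needs care in the unitary case with $\varepsilon=-1$ and $r$ odd. Lifting-the-exponent still applies there because $q^r+1=q^r-(-1)^r$ and $r\mid q+1$. We also expect the torus to be determined only up to $G$-conjugacy, which matches the intended statement.
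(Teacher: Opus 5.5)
Your proposal is correct and follows the paper's proof essentially step for step. You lift to $\SL_n$ and use that $\hat a^r$ is scalar, together with regularity, to get $n\le r$. You then use $r\mid |Z|$ and $Z(\sG)\cong \mu_n/\ker\pi$ to get $r\mid n$ and $\ker\pi=1$. Finally, you rule out the $r$-cycle action of $F$ on the eigenlines via $v_r(N)=1$ and $|Z|=r$.

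Your additions only fill in details the paper leaves implicit: that regularity gives $n$ distinct eigenvalues, the lifting-the-exponent valuation, and the Lang--Steinberg conjugation. One small point: the trivial-or-$r$-cycle dichotomy also needs $r\mid q-\varepsilon$ from your Step 2, so that $F$ acts on the eigenvalue set $\lambda\mu_r$ by a translation. The paper likewise does not state this explicitly.
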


\begin{proof}
    Let $\pi:\sG_{\mathrm{sc}}=\SL_n\to\sG$ be the simply connected covering isogeny, and lift $a$ to $\hat a$. Since $a^r\in Z(\sG)$, it follows that $\hat a^r\in Z(\SL_n)$. Therefore $\hat a$ has at most $r$ eigenvalues, so $n\le r$. Since $r\mid |Z|$, we also have $r\mid |Z(\sG)|$. The group $Z(\sG)$ is a quotient of $Z(\SL_n)=\mu_n$, so $r\mid n$. Thus $n=r$.

    Since $\ker\pi\le\mu_r$ and $r$ is prime, the kernel is either trivial or equal to $\mu_r$. The latter case would give $\sG\cong\PGL_r$, which has trivial center, contradicting $r\mid |Z|$. Hence $\ker\pi=1$, so $\sG$ is simply connected.

    Set $\varepsilon=1$ in the split case and $\varepsilon=-1$ in the unitary case. Since $r\mid |Z|$, we have $r\mid q-\varepsilon$. The eigenvalues of $\hat a$ have the form $\lambda,\lambda\zeta,\ldots,\lambda\zeta^{r-1}$, where $\zeta$ has order $r$. The Steinberg endomorphism $F$ acts on the eigenlines either trivially or as an $r$-cycle. Suppose that $F$ acts as an $r$-cycle. Then
    \[
        |T|=\frac{q^r-\varepsilon^r}{q-\varepsilon}
    \]
    and $v_r(|T|)=1$. Since $G=\SL_r(q)$ or $G=\SU_r(q)$, we also have $v_r(|Z|)=1$, and therefore $r\nmid |T/Z|$, a contradiction. Hence every eigenline is $F$-stable, so $T$ is a split diagonal torus in the linear case and a diagonal unitary torus in the unitary case.
\end{proof}

\begin{lemma}\label{lemma:corpimality}
    Suppose that $A$ is a direct CC-torus of odd order. Then $(|A/Z|,|Z|)=1$.
\end{lemma}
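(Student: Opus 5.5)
We argue by contradiction. Suppose a prime $r$ divides both $|A/Z|$ and $|Z|$; $r$ is odd since $|A|$ is odd.

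\textbf{Producing an element of order $r$ modulo $Z$.} Since $r$ divides $|A/Z|$, Cauchy's theorem gives $a\in A$ with $aZ$ of order $r$. Then $a\notin Z$, so $C_G(a)=A$ because $A$ is a direct CC-torus. Write $a=tz$ with $t\in T$ and $z\in Z$. Then $tZ=aZ$ also has order $r$, and $C_G(t)=C_G(a)=A$, since multiplying by a central element does not change the centralizer. The elements of $A$ are semisimple, because $|A|$ is odd (Lemma~\ref{lemma:2-1}). Moreover $t$ is regular: otherwise Lemma~\ref{lemma:nr-ss--unip} would put a nontrivial unipotent element, of even order, inside $C_G(t)=A$, which is impossible.

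\textbf{Reducing to a few types.} Now $t^r\in Z\le Z(\sG)$ (Lemma~\ref{lemma:5-2}), and $r$ divides $|Z|$, hence divides $|Z(\sG)|$. Since $Z(\sG)$ is a quotient of $\ker\pi$, which has order dividing $|\Lambda(\Phi)|$, the odd prime $r$ divides the order of the fundamental group. Exactly as in Lemma~\ref{lemma:four-cases}, this leaves only the types $A_{n-1}$, ${}^2A_{n-1}$, $E_6$ and ${}^2E_6$.

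\textbf{Type $E_6$.} Here $r=3$. Lifting $t$ to $\hat t\in\sG_{\mathrm{sc}}$ gives $\hat t^3\in Z(\sG_{\mathrm{sc}})$. The root computation of Lemma~\ref{lemma:E6-adjoint--impossible} (the subsystem $A_3$ spanned by $\alpha_4,\alpha_5,\alpha_6$) uses only that every root value $\alpha(t)$ is a cube root of unity, and it should still go through. Indeed roots are trivial on the center, so $\alpha(\hat t)^3=\alpha(\hat t^3)=1$. That computation shows $t$ is not regular, which is a contradiction.

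\textbf{Type $A_{n-1}$ and ${}^2A_{n-1}$.} Lemma~\ref{lemma:aux}, applied with $a=t$, gives $n=r$, $\sG=\SL_r$ simply connected, and $T$ the split diagonal torus (respectively the diagonal unitary torus). The lift $\hat t=t$ has eigenvalues $\lambda,\lambda\zeta,\ldots,\lambda\zeta^{r-1}$ with $\zeta$ of order $r$. Let $w$ be the permutation matrix, adjusted by a sign or scalar so that it lies in $\SL_r(q)$ or $\SU_r(q)$; this is possible because $r$ is odd, so the $r$-cycle is an even permutation. Then $w$ cyclically permutes the eigenlines, so $wtw^{-1}=\zeta^{\pm1}t$. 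Since $\zeta^{\pm1}$ is a scalar of order $r$ in the center, $w$ centralizes $tZ$ in $G/Z$ but does not centralize $t$.

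This does not yet contradict anything directly. To finish, I would pass to $G/Z$ and use Lemma~\ref{lemma:williams}(1). This is circular, because Lemma~\ref{lemma:williams} assumes the coprimality we are proving. Instead I would argue directly with the element $a'=t w$-twisted: choose $b\in T$ with $b^{-1}(wbw^{-1})=\zeta^{\mp1}$. Such a $b$ exists because $t$ itself satisfies $t^{-1}wtw^{-1}=\zeta^{\pm1}$. Then look for a noncentral element $x\in A$ that commutes with $w$. Concretely, take $x$ diagonal with entries constant along the $w$-orbit, i.e.\ a scalar times a $w$-invariant diagonal matrix. Since $T$ is the full diagonal torus, the $w$-invariant part of $T$ is only scalars, so this fails, and I would instead examine $C_G(t^{k})$ for other powers. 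The cleanest route is probably $|T|$: here $|T|=(q-\varepsilon)^{r-1}$, which is even unless $q-\varepsilon$ is odd. Since $q$ is even, $q-\varepsilon$ is odd, so parity gives nothing.

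The contradiction I expect to use is instead the following. Some element $x\in T\setminus Z$ is nonregular, for example $\diag(\omega,\omega^{-1},1,\ldots,1)$, which exists because $q-\varepsilon\ge3$. Since $A=T$ is a direct CC-torus, $C_G(x)=A$, and so $C_G(x)$ has odd order. But $x$ is nonregular, so Lemma~\ref{lemma:nr-ss--unip} puts a nontrivial unipotent element, of even order, in $C_G(x)$. This is a contradiction, and it ends this case.

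\textbf{Main obstacle.} The hardest step is the type $A$ case. The two things to check are that $q-\varepsilon\ge3$, so that a nonregular noncentral diagonal element exists, and that the $E_6$ root argument transfers to the non-adjoint form.
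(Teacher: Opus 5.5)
Your proof is correct and follows essentially the same route as the paper. You reduce to types $A_{n-1}$, ${}^2A_{n-1}$, $E_6$, ${}^2E_6$, dispose of $E_6$ via the $A_3$-subsystem root computation of Lemma~\ref{lemma:E6-adjoint--impossible}, and in type $A$ use Lemma~\ref{lemma:aux} to make $T$ diagonal; you then exhibit a noncentral nonregular element of $T=A$ (the paper takes $\operatorname{diag}(\omega,\omega,\omega^{-2},1,\ldots,1)$), whose centralizer contains a nonidentity unipotent element by Lemma~\ref{lemma:nr-ss--unip}.

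Two small fixes are needed, and the exploratory detours before your final step can simply be dropped:
\begin{enumerate}
\item $Z(\sG)\cong Z(\sG_{\mathrm{sc}})/\ker\pi$ is a quotient of $Z(\sG_{\mathrm{sc}})$, not of $\ker\pi$. Your conclusion that $r$ divides $|\Lambda(\Phi)|$ still holds.
\item $\operatorname{diag}(\omega,\omega^{-1},1,\ldots,1)$ is nonregular only because $n=r\ge 5$, which is forced by $\ell\ge 3$. For $r=3$ it has three distinct eigenvalues and is regular. So $r\ge 5$, and not just $q-\varepsilon\ge 3$, is the check your last step needs.
\end{enumerate}
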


\begin{proof}
    Suppose not, and let $r$ be a prime dividing both $|A/Z|$ and $|Z|$. Since $|A|$ is odd, the prime $r$ is odd. Choose $a\in A\setminus Z$ such that $aZ$ has order $r$. Then $a^r\in Z$ and $C_G(a)=A$.

    By Lemma~\ref{lemma:5-2}, we have $Z\le T$, so $A=T$. Hence $a\in T$ and $a$ is semisimple. The element $a$ must be regular, since otherwise Lemma~\ref{lemma:nr-ss--unip} would give a nonidentity unipotent element in $C_G(a)=A$, contradicting the oddness of $|A|$.

    Since the odd prime $r$ divides $|Z|$, inspection of the possible centers in \cite[Table~9.2]{MalleTesterman2011} shows that the Lie type is one of $A_\ell$, ${}^2A_\ell$, $E_6$, or ${}^2E_6$. In types $E_6$ and ${}^2E_6$, we have $r=3$. The image of $a$ in the adjoint group is then a nontrivial element of order $3$, and the argument in Lemma~\ref{lemma:E6-adjoint--impossible} shows that it is not regular, a contradiction.

    Suppose that the type is $A_\ell$ or ${}^2A_\ell$. By Lemma~\ref{lemma:aux}, we have $n=r$, the group $\sG$ is simply connected, and $T$ is diagonal. Since $n=r\ge5$, choose an element $\omega$ of order $r$ in $\F_q^\times$ in the linear case or in $\{c\in\F_{q^2}^\times\mid c^{q+1}=1\}$ in the unitary case. The diagonal element
    \[
        b=\operatorname{diag}(\omega,\omega,\omega^{-2},1,\ldots,1)
    \]
    lies in $T\setminus Z$ and has a repeated eigenvalue, so it is nonregular. By Lemma~\ref{lemma:nr-ss--unip}, the centralizer $C_G(b)$ contains a nonidentity unipotent element and is therefore strictly larger than the odd-order torus $T=A$. This contradicts the assumption that $A$ is a direct CC-torus.
\end{proof}

\begin{lemma}\label{lemma:6-1}
    Suppose that $A>Z$ is a direct CC-torus, where $|A/Z|$ is odd and coprime to $|Z|$. Then every element of $A^G\setminus Z$ has conjugacy class size $|G|/|A|$ and affords the same isolated vertex of $\cD(G)$.
\end{lemma}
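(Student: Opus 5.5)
Every element of $A^G\setminus Z$ is conjugate to an element of $A\setminus Z$, and by the direct CC-torus hypothesis each $a\in A\setminus Z$ satisfies $C_G(a)=A$. Hence all these elements have class size $|G|/|A|$ and afford a single vertex $v_A$ of $\cD(G)$. What remains is to show that $v_A$ is isolated, that is, to show that if $x\in G\setminus Z$ satisfies $|C_G(x)|\mid |A|$ or $|A|\mid |C_G(x)|$, then $|C_G(x)|=|A|$.

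Throughout, work in $\overline G=G/Z$ and set $\pi=\pi(|A/Z|)$. This set is nonempty since $A>Z$, and by Lemma~\ref{lemma:williams}(2) it is odd and coprime to $|Z|$. Fix a prime $p\in\pi$.

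\emph{Case $|A|\mid |C_G(x)|$.} The group $C_G(x)$ contains an element $g$ of order $p$. Its image $\overline g$ is a nontrivial $\pi$-element of $\overline G$, because $p\nmid |Z|$. By Lemma~\ref{lemma:williams}(3) we may conjugate so that $\overline g\in A/Z$. The coset $gZ$ then lies in $A$, and since $g$ has order $p$ coprime to $|Z|$, it follows that $g\in A\setminus Z$. Consequently $x\in C_G(g)=A$, and $x\notin Z$. The CC property then gives $C_G(x)=A$.

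\emph{Case $|C_G(x)|\mid |A|$.} First I would replace $x$ by a suitable power. Since $x\in C_G(x)$ and $|A|$ is odd, $x$ has odd order, so $x$ is semisimple by Lemma~\ref{lemma:2-1}. The order of $\overline x$ divides $|C_G(x)/Z|$, which divides $|A/Z|$ because $Z\le C_G(x)$. Thus $\overline x$ is a nontrivial $\pi$-element, and Lemma~\ref{lemma:williams}(3) conjugates it into $A/Z$, so that $x\in A\setminus Z$ (as $Z\le A$). Then $C_G(x)=A$.

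I expect the main obstacle to be the bookkeeping in the first case: choosing the element $g$ with order coprime to $|Z|$ so that it genuinely lies in $A$, and not merely in $AZ$ of some conjugate. Once the coprimality from Lemma~\ref{lemma:corpimality} (or the hypothesis) is used in this way, both divisibility directions collapse to $C_G(x)=A$, so $v_A$ has no neighbours other than itself.
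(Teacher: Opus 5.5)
Your proof is correct. For the class-size claim and for the case $|C_G(x)|\mid |A|$ it is the paper's argument. In the case $|A|\mid |C_G(x)|$, however, you take a genuinely different route.

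\textbf{How the paper handles $|A|\mid |C_G(x)|$.} It splits according to whether $xZ$ is a $\pi$-element. If it is, Lemma~\ref{lemma:williams}(3) conjugates it into $A/Z$. If it is not, the paper takes a prime $b\notin\pi$ dividing $|xZ|$ and an element of order $r\in\pi$ in $C_G(x)/Z$. Their product has order $br$ in $G/Z$, which contradicts Lemma~\ref{lemma:williams}(2): $\pi$ is a full connected component of the prime graph.

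\textbf{How you handle it.} You use Cauchy to find $g\in C_G(x)$ of order $p\in\pi$, and note $g\notin Z$ because $p\nmid |Z|$. Part~(3) then conjugates $g$ into $A$. The direct CC property $C_G(g^h)=A$ forces $x^h\in A\setminus Z$, hence $|C_G(x)|=|A|$.

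\textbf{Comparison.} Your version uses only part~(3) and avoids the case split, applying the CC property directly. This is the standard argument by which a CC-subgroup isolates its primes. The paper's version instead leans on the prime-graph structure that motivates Williams' lemma. Both arguments end with $x\in A^G$.

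\textbf{Minor corrections.}
\begin{itemize}
\item Coprimality of $\pi$ with $|Z|$ is a hypothesis of the lemma, not a consequence of Lemma~\ref{lemma:williams}(2).
\item In your second case, the detour through oddness of $|A|$ and semisimplicity of $x$ is unnecessary; the oddness would in any case need $|Z|$ odd, by Lemma~\ref{lemma:2-2}. The announced passage to a power of $x$ is never carried out and is not needed.
\item When you conjugate $g$ into $A$, you are conjugating the pair $(x,g)$ simultaneously. This preserves both $|C_G(x)|$ and $x\notin Z$, and is worth stating explicitly.
\end{itemize}
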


\begin{proof}
    Let $a\in A^G\setminus Z$, so $a=b^g$ for some $b\in A\setminus Z$ and $g\in G$. Then $C_G(b)=A$, and therefore $C_G(a)=C_G(b)^g=A^g$. Hence $|C_G(a)|=|A|$ and $|a^G|=|G|/|A|$.

    It remains to show that this vertex is isolated. Let $x\in G\setminus Z$ satisfy $|C_G(x)|\mid |A|$ or $|A|\mid |C_G(x)|$, and set $\pi=\pi(|A/Z|)$.

    Suppose first that $|C_G(x)|\mid |A|$. Then $|C_G(x)/Z|\mid |A/Z|$. Therefore $xZ$ is a nonidentity $\pi$-element of $G/Z$, so by Lemma~\ref{lemma:williams} it is conjugate into $A/Z$. Hence $x\in A^G$, and the direct CC property gives $|C_G(x)|=|A|$.

    Now suppose that $|A|\mid |C_G(x)|$. Then $|A/Z|\mid |C_G(x)/Z|$. If $xZ$ is a $\pi$-element, then it is conjugate into $A/Z$, and again $|C_G(x)|=|A|$. Otherwise, choose a prime $b\mid |xZ|$ with $b\notin\pi$, and choose $r\in\pi$. Since $r\mid |C_G(x)/Z|$, there exists $yZ\in C_G(x)/Z$ of order $r$. Let $uZ$ be a power of $xZ$ of order $b$. Then $uZ$ and $yZ$ commute and have coprime orders, so their product has order $br$. This contradicts the fact that $\pi$ is a complete connected component of the prime graph of $G/Z$.

    Thus every vertex adjacent to $|G|/|A|$ is equal to it, and the vertex is isolated.
\end{proof}

\begin{lemma}\label{lemma:non-direct-cc}
    Suppose that $|C_G(s)|$ is odd and $C_G(s)=A$. If $A$ is not a direct CC-torus, then either $|s^G|\in\cU$, or there exist an odd prime $r$ and $\varepsilon\in\{1,-1\}$ such that $G=\PGL_r(q)$ when $\varepsilon=1$ or $G=\PGU_r(q)$ when $\varepsilon=-1$, and, with $N=(q^r-\varepsilon^r)/(q-\varepsilon)$, the vertex $|s^G|$ lies in the two-vertex component
    \[
        \left\{\frac{|G|}{rN},\frac{|G|}{N}\right\}.
    \]
\end{lemma}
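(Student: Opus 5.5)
Since $A$ is not a direct CC-torus, there is $a\in A\setminus Z$ with $C_G(a)\ne A$. First observe that $C_G(a)\ge A$ always, because $A=TZ$ is abelian: $T$ is abelian and $Z$ is central. Hence $C_G(a)>A$. Note also that $A=C_G(s)$ has odd order, so $|T|$ and $|a|$ are odd. Since $a\in A=T$ (using $Z\le T$ from Lemma~\ref{lemma:5-2}), $a$ is semisimple and noncentral.

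The plan is to link $s$ to $a$ and then classify $a$ using the earlier results. We have $A\le C_G(a)$, so $|C_G(s)|=|A|$ divides $|C_G(a)|$. Therefore $s\sim a$ in $\cD(G)$, either directly by an edge or because the class sizes coincide. It thus suffices to locate the vertex of $a$.

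The classification of $a$ splits into cases.
\begin{itemize}
\item If $a$ is nonregular, then $a\in\cU$ by Proposition~\ref{prop:noncent-nonreg-ss--in-U}.
\item If $|C_G(a)|$ is even, then $a\in\cU$ by Proposition~\ref{prop:noncent-ss-|C|-even--in-U}.
\item Otherwise $a$ is regular semisimple and $|C_G(a)|$ is odd. In this case $C_\sG(a)^\circ$ is a maximal torus containing $\sT$, hence equal to $\sT$. So the subgroup ``$A$'' attached to $a$ is the same $A=TZ$, and $C_G(a)>A$. Proposition~\ref{prop:sec-5} now applies to $a$. It gives three possibilities: $a\in\cU$; or $a$ lies in the two-vertex component $\{|G|/(rN),|G|/N\}$ for $G=\PGL_r(q)$ or $\PGU_r(q)$; or $a$ affords an isolated vertex with $|C_G(a)|=r^r$.
\end{itemize}
In the first two outcomes, $s\sim a$ transfers the conclusion to $s$, which is exactly the statement.

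The main obstacle is the isolated outcome of Proposition~\ref{prop:sec-5}(2). Since $s\sim a$ and $a$ is isolated, we get $|C_G(s)|=|C_G(a)|$, i.e.\ $|A|=r^r$. But $C_G(a)>A$ forces $|C_G(a)|>|A|$, which is a contradiction. So this case cannot occur.

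One must also check that the divisibility $|A|\mid|C_G(a)|$ genuinely gives adjacency or equality of vertices, and not something weaker. This holds because vertices correspond to centralizer orders via $|G|/|C_G(\cdot)|$, and divisibility of centralizer orders is exactly divisibility of class sizes in the reverse direction. I also expect to verify that the hypotheses of Proposition~\ref{prop:sec-5} are literally satisfied by $a$: it must be noncentral, regular, semisimple, with odd centralizer order and $C_G(a)>A$ for the same $A$. The torus identification above handles this.
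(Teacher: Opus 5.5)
Your proposal is correct and follows the paper's proof essentially step for step. You choose $a\in A\setminus Z$ with $C_G(a)>A$ and get $s\sim a$ from $|A|\mid|C_G(a)|$. You send the nonregular and even-centralizer cases to $\cU$ via Propositions~\ref{prop:noncent-nonreg-ss--in-U} and~\ref{prop:noncent-ss-|C|-even--in-U}. You then apply Proposition~\ref{prop:sec-5} to $a$, ruling out its isolated alternative because $|a^G|\neq|s^G|$ are adjacent. Your explicit check that $C_{\sG}(a)^\circ=\sT$, so that the subgroup $A$ attached to $a$ coincides with the one attached to $s$, is a detail the paper leaves implicit when it simply asserts that Proposition~\ref{prop:sec-5} applies.
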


\begin{proof}
    By assumption, there exists $a\in A\setminus Z$ such that $C_G(a)\ne A$. Since $A\le C_G(a)$, we have $|C_G(s)|=|A|\mid |C_G(a)|$, and hence $s\sim a$. Moreover, the inequality $C_G(a)>A$ shows that $|a^G|\ne |s^G|$.

    By Lemma~\ref{lemma:5-2}, we have $Z\le T$, so $A=T$. Thus $a\in T$ and is semisimple. If $a$ is nonregular, then Proposition~\ref{prop:noncent-nonreg-ss--in-U} gives $|a^G|\in\cU$, and hence $|s^G|\in\cU$. If $a$ is regular and $|C_G(a)|$ is even, then Proposition~\ref{prop:noncent-ss-|C|-even--in-U} gives the same conclusion.

    Suppose that $a$ is regular and $|C_G(a)|$ is odd. Proposition~\ref{prop:sec-5} applies. Its isolated alternative is impossible because the distinct vertices $|s^G|$ and $|a^G|$ are adjacent. Therefore either $|a^G|\in\cU$, in which case $|s^G|\in\cU$, or $|a^G|$ lies in the two-vertex component displayed in the statement. Since $s\sim a$, the same is true of $|s^G|$.
\end{proof}

The following proposition follows formally from Proposition~\ref{prop:sec-5} and the lemmas of this section.

\begin{prop}\label{prop:sec-6}
    Let $s\in G\setminus Z(G)$ be regular and semisimple with $|C_G(s)|$ odd. Then one of the following holds.
    \begin{itemize}
        \item[(1)] The vertex $|s^G|$ lies in $\cU$.
        \item[(2)] The vertex $|s^G|$ is isolated.
        \item[(3)] There exist an odd prime $r$ and $\varepsilon\in\{1,-1\}$ such that $G=\PGL_r(q)$ when $\varepsilon=1$ or $G=\PGU_r(q)$ when $\varepsilon=-1$, and, with $N=(q^r-\varepsilon^r)/(q-\varepsilon)$, the vertex $|s^G|$ lies in the two-vertex component
        \[
            \left\{\frac{|G|}{rN},\frac{|G|}{N}\right\}.
        \]
    \end{itemize}
\end{prop}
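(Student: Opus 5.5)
The plan is a case split on whether $C_G(s)$ equals $A=TZ(G)$, where $\sT=C_\sG(s)^\circ$ and $T=\sT^F$ as fixed before Section~\ref{sec:5}. Since $A\le C_G(s)$ always holds, either $A<C_G(s)$ or $A=C_G(s)$.

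\textbf{Case $A<C_G(s)$.} The hypotheses of Proposition~\ref{prop:sec-5} are satisfied: $s$ is noncentral, regular and semisimple with $|C_G(s)|$ odd. That proposition gives three outcomes.
\begin{itemize}
\item Outcome (1) is conclusion (1) here.
\item Outcome (2), where $|C_G(s)|=r^r$, is a special instance of conclusion (2), since the vertex is isolated.
\item Outcome (3) is literally conclusion (3).
\end{itemize}

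\textbf{Case $C_G(s)=A$, with $A$ not a direct CC-torus.} Lemma~\ref{lemma:non-direct-cc} applies directly. It yields either $|s^G|\in\cU$, which is conclusion (1), or membership in the two-vertex component, which is conclusion (3).

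\textbf{Case $C_G(s)=A$, with $A$ a direct CC-torus.} Here I would apply Lemma~\ref{lemma:6-1}, and this requires checking its hypotheses.
\begin{itemize}
\item $A>Z$: this holds because $s\in A\setminus Z$.
\item $|A/Z|$ is odd: this holds because $|A|=|C_G(s)|$ is odd.
\item $|A/Z|$ is coprime to $|Z|$: this is exactly Lemma~\ref{lemma:corpimality}, which needs $A$ to be a direct CC-torus of odd order, and both conditions are in force.
\end{itemize}
Lemma~\ref{lemma:6-1} then says every element of $A^G\setminus Z$, in particular $s$ itself, affords an isolated vertex. This is conclusion (2).

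\textbf{Expected obstacle.} The only real subtlety is this last case: confirming that the coprimality hypothesis of Lemma~\ref{lemma:williams}, and hence of Lemma~\ref{lemma:6-1}, is supplied by Lemma~\ref{lemma:corpimality} rather than assumed. Lemma~\ref{lemma:corpimality} has a hidden dependency: it uses $n=r\ge5$ in type $A$, through Lemma~\ref{lemma:aux}, when constructing the nonregular element $b$. I would check that the small cases are excluded by the scope of the paper, namely $\ell\ge3$, so $n\ge4$, and $n=r$ is an odd prime, so $r\ge5$. If not, I would treat $r=3$ separately. Once that is settled, the proposition is a formal assembly of the three cases above.
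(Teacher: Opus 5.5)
Your proposal is correct and is the same assembly the paper intends. The paper only says the proposition follows formally from Proposition~\ref{prop:sec-5} and the lemmas of Section~\ref{sec:6}, and your three-way split follows exactly the argument the paper writes out in the proof of Theorem~\ref{thm:isolated-vertices}: Proposition~\ref{prop:sec-5} when $A<C_G(s)$, Lemma~\ref{lemma:non-direct-cc} when $C_G(s)=A$ is not a direct CC-torus, and Lemma~\ref{lemma:corpimality} feeding Lemma~\ref{lemma:6-1} in the direct CC case. Your check that $r\ge 5$ in Lemma~\ref{lemma:corpimality} is also right, since the scope gives $n\ge 4$ and $n=r$ is an odd prime.
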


\section{Proof of the Main Theorems}\label{sec:7}

\begin{thm}\label{thm:components}
    Let $G=\sG^F$ be one of the finite groups of Lie type under consideration, in defining characteristic $2$. Then $\cD(G)$ has a distinguished component $\cU$ containing all nonidentity unipotent class sizes. Every component other than $\cU$ is either an isolated vertex or a two-vertex component of the form
    \[
        \left\{\frac{|G|}{rN},\frac{|G|}{N}\right\},
    \]
    where $G=\PGL_r(q)$ or $G=\PGU_r(q)$, the integer $r$ is an odd prime dividing $q-\varepsilon$, $\varepsilon=1$ in the linear case and $\varepsilon=-1$ in the unitary case, and $N=(q^r-\varepsilon^r)/(q-\varepsilon)$.
\end{thm}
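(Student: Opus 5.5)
The proof is an assembly of the earlier results. By Proposition~\ref{prop:unip-component}, all nonidentity unipotent class sizes lie in one component $\cU$. Fix a noncentral $g\in G$ with Jordan decomposition $g=su=us$. The task is to show that the vertex $|g^G|$ either lies in $\cU$, is isolated, or lies in one of the displayed two-vertex components.

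\textbf{Reduction to regular semisimple elements with odd centralizer.}
1. If $u\neq 1$, then $u$ is noncentral by Lemma~\ref{lemma:2-2}. If also $s\notin Z(G)$, Lemma~\ref{lemma:4-AIN} gives $g\sim u$, since the orders are coprime by Lemma~\ref{lemma:2-1}. If $s\in Z(G)$, then $C_G(g)=C_G(u)$, because $C_G(su)=C_G(u)$ when $s$ is central. Either way $|g^G|\in\cU$.
2. If $u=1$, then $g=s$ is a noncentral semisimple element. If $s$ is nonregular, Proposition~\ref{prop:noncent-nonreg-ss--in-U} puts it in $\cU$.
3. If $s$ is regular but $|C_G(s)|$ is even, Proposition~\ref{prop:noncent-ss-|C|-even--in-U} puts it in $\cU$.

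The only remaining vertices come from regular semisimple $s$ with $|C_G(s)|$ odd. For these, Proposition~\ref{prop:sec-6} gives exactly the three alternatives: the vertex lies in $\cU$, is isolated, or lies in the two-vertex component.

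\textbf{Arithmetic conditions on the two-vertex component.} In that alternative, the earlier results say $r$ is an odd prime dividing $q-\varepsilon$. Lemma~\ref{lemma:An-adjoint} supplies $r\mid q-\varepsilon$, and Lemma~\ref{lemma:irreducible-two-vertex} shows the pair is a full component.

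\textbf{Components other than $\cU$.} Let $\mathcal{C}\neq\cU$ be a component. Every vertex of $\mathcal{C}$ is afforded by some regular semisimple $s$ with odd centralizer not in $\cU$. For such $s$, either the vertex is isolated, so $\mathcal{C}$ is that single vertex, or it lies in a two-vertex component, which must then equal $\mathcal{C}$. This shows each non-$\cU$ component has the claimed form.

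\textbf{Main obstacle.} The only delicate point is Proposition~\ref{prop:sec-6}, stated as following formally, when $C_G(s)=A$.
1. If $A$ is not a direct CC-torus, apply Lemma~\ref{lemma:non-direct-cc}.
2. If $A$ is a direct CC-torus, then $|A|=|C_G(s)|$ is odd, so Lemma~\ref{lemma:corpimality} gives coprimality of $|A/Z|$ and $|Z|$.
3. Next, $A>Z$ because $s\notin Z$, and $|A/Z|$ is odd. Lemma~\ref{lemma:6-1} then makes the vertex isolated.
4. The case $C_G(s)>A$ is Proposition~\ref{prop:sec-5}.

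I would write these checks out explicitly, since they are the only places where hypotheses such as oddness and $A>Z$ must be verified rather than quoted.
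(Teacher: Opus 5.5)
Your proposal is correct and matches the paper's proof. Both split on the Jordan decomposition, send every element with nontrivial unipotent part, every nonregular semisimple element, and every semisimple element with even centralizer order into $\cU$, and handle the remaining regular semisimple elements with odd centralizer order by Proposition~\ref{prop:sec-6}. Your additional checks spell out details the paper leaves implicit: the verification that $A>Z$, that $|A/Z|$ is odd and coprime to $|Z|$ (needed for Lemmas~\ref{lemma:corpimality} and~\ref{lemma:6-1}), and the passage from these vertex-wise alternatives to the stated form of each component other than $\cU$.
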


\begin{proof}
    Let $x\in G\setminus Z(G)$, and write its Jordan decomposition as $x=su=us$. The element $s$ has odd order and $u$ has $2$-power order. Suppose first that $u\ne1$. If $s\in Z(G)$, then $C_G(x)=C_G(u)$, and hence $x\sim u$. If $s\notin Z(G)$, then Lemma~\ref{lemma:4-AIN} gives $s\sim x\sim u$. In either case, Proposition~\ref{prop:unip-component} gives $|x^G|\in\cU$.

    Now suppose that $u=1$, so $x$ is semisimple. If $x$ is nonregular, then Proposition~\ref{prop:noncent-nonreg-ss--in-U} gives $|x^G|\in\cU$. If $x$ is regular and $|C_G(x)|$ is even, then Proposition~\ref{prop:noncent-ss-|C|-even--in-U} gives $|x^G|\in\cU$. Finally, if $x$ is regular and $|C_G(x)|$ is odd, the conclusion follows from Proposition~\ref{prop:sec-6}.
\end{proof}

\begin{thm}\label{thm:isolated-vertices}
    The isolated vertices of $\cD(G)$ outside $\cU$ are exactly the following.
    \begin{itemize}
        \item[(1)] The vertex $|G|/|A|$, where $A=TZ(G)>Z(G)$, $T=\sT^F$ for an $F$-stable maximal torus $\sT\le\sG$, the subgroup $A$ is a direct CC-torus, and $|A|$ is odd.
        \item[(2)] The vertex $|G|/r^r$, where either $G=\PGL_r(q)$ and $r=q-1$ is an odd prime, or $G=\PGU_r(q)$ and $r=q+1$ is an odd prime.
    \end{itemize}
\end{thm}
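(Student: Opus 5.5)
Both inclusions need to be shown: every isolated vertex outside $\cU$ has the form (1) or (2), and every vertex of the form (1) or (2) is isolated and lies outside $\cU$.

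\emph{Classification.} Take an isolated vertex $v\notin\cU$ and an element $x\in G\setminus Z(G)$ affording it. The case analysis in the proof of Theorem~\ref{thm:components} gives the following. If $x$ had a nontrivial unipotent part, $x$ were a nonregular semisimple element, or $x$ were semisimple with $|C_G(x)|$ even, then $v\in\cU$. So $x$ is regular semisimple and $|C_G(x)|$ is odd.

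Put $\sT=C_\sG(x)^\circ$ and $A=TZ(G)=T$; the equality $A=T$ holds by Lemma~\ref{lemma:5-2}. There are two cases.

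\begin{itemize}
\item[(a)] $C_G(x)>A$. Apply Proposition~\ref{prop:sec-5}. Conclusion (1) is excluded because $v\notin\cU$. Conclusion (3) is excluded because $v$ would then lie in a two-vertex component, and its two vertices $|G|/(rN)$ and $|G|/N$ are distinct. Hence conclusion (2) holds, which is item (2) of the statement.
\item[(b)] $C_G(x)=A$. By Lemma~\ref{lemma:non-direct-cc}, if $A$ is not a direct CC-torus then $v\in\cU$ or $v$ lies in a two-vertex component. Both are excluded, so $A$ is a direct CC-torus. Moreover $|A|=|C_G(x)|$ is odd, and $A>Z(G)$ because $x\in A\setminus Z(G)$. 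This is item (1) of the statement.
\end{itemize}

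\emph{Converse.} For item (1), Lemma~\ref{lemma:corpimality} gives $\gcd(|A/Z|,|Z|)=1$, and $|A/Z|$ is odd. Lemma~\ref{lemma:6-1} then shows that $|G|/|A|$ is afforded by an element and is an isolated vertex. It remains to see that it is not in $\cU$. The component $\cU$ contains the nonidentity unipotent class sizes, and these are distinct from $|G|/|A|$: a unipotent centralizer has even order, while $|A|$ is odd. An isolated vertex is its own component, so $|G|/|A|\notin\cU$.

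For item (2), take a diagonal regular element $t$ with $C_G(t)=P$, where $P$ is as in Lemma~\ref{lemma:5-8}. Concretely, $t$ is the image of a lift with eigenvalues $\lambda\mu_r$ lying in the diagonal torus, which is possible since $|K|=r$. Lemma~\ref{lemma:5-8} shows that the vertex $|G|/r^r$ is isolated. It lies outside $\cU$ by the same parity argument, since $r^r$ is odd.

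\emph{Main obstacle.} The delicate step is the converse for item (2): one must exhibit an element $t$ that actually satisfies the hypotheses of Lemma~\ref{lemma:5-8}. This means $C_G(t)>T$ together with all eigenlines being $F$-fixed. To get this, I take $\hat t=\operatorname{diag}(1,\zeta,\ldots,\zeta^{r-1})$ with $\zeta\in K$ of order $r$, and let $w$ be the cyclic permutation matrix. Then $w$ conjugates $\hat t$ to $\zeta^{\pm1}\hat t$, and since $w\in G$ but $w\notin T$, this gives $C_G(t)>T$. Because $\zeta\in K$, the lift $\hat t$ is rational over the relevant field, so $F$ fixes every eigenline.
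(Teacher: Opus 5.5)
Your proposal is correct and follows the paper's proof essentially step for step. You use the same reduction to regular semisimple $x$ with $|C_G(x)|$ odd, the same split into $C_G(x)>A$ (via Proposition~\ref{prop:sec-5}) and $C_G(x)=A$ (via Lemma~\ref{lemma:non-direct-cc}), and the same converse: Lemmas~\ref{lemma:corpimality} and~\ref{lemma:6-1} for item (1), and for item (2) the element $\operatorname{diag}(1,\zeta,\ldots,\zeta^{r-1})$ with the cyclic permutation matrix fed into Lemma~\ref{lemma:5-8}, plus the parity argument to exclude $\cU$; the only difference is that you present the classification direction first.
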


\begin{proof}
    Suppose first that $A$ is as in~(1). By Lemma~\ref{lemma:corpimality}, we have $(|A/Z(G)|,|Z(G)|)=1$. Lemma~\ref{lemma:6-1} therefore shows that every element of $A^G\setminus Z(G)$ has class size $|G|/|A|$ and that this vertex is isolated. Since $|A|$ is odd whereas every nonidentity unipotent centralizer has even order, this vertex does not lie in $\cU$.

    Now suppose that $G$ is as in~(2). Let $\varepsilon=1$ in the linear case and $\varepsilon=-1$ in the unitary case, and let $K=\F_q^\times$ in the linear case and $K=\{c\in\F_{q^2}^\times\mid c^{q+1}=1\}$ in the unitary case. Then $|K|=r$. Choose a generator $\omega$ of $K$, and let $t$ be the image in $G$ of
    \[
        \operatorname{diag}(1,\omega,\omega^2,\ldots,\omega^{r-1}).
    \]
    The element $t$ is rational, regular, and semisimple, and $F$ fixes its eigenlines. Set $\sT=C_\sG(t)^\circ$ and $T=\sT^F$. The cyclic permutation of the eigenlines centralizes $t$ projectively, so $C_G(t)>T$. The calculation in Lemma~\ref{lemma:5-8} therefore applies and shows that $|C_G(t)|=r^r$ and that $|t^G|=|G|/r^r$ is isolated.

    Conversely, let $|x^G|$ be an isolated vertex outside $\cU$, and write the Jordan decomposition of $x$ as $x=su=us$. If $u\ne1$, then the proof of Theorem~\ref{thm:components} gives $|x^G|\in\cU$, a contradiction. Hence $x$ is semisimple. Proposition~\ref{prop:noncent-nonreg-ss--in-U} shows that $x$ is regular, and Proposition~\ref{prop:noncent-ss-|C|-even--in-U} shows that $|C_G(x)|$ is odd.

    Let $\sT=C_\sG(x)^\circ$, set $T=\sT^F$, and set $A=TZ(G)$. If $C_G(x)>A$, then Proposition~\ref{prop:sec-5} applies. Its first alternative is impossible because $|x^G|\notin\cU$, and its third alternative is impossible because $|x^G|$ is isolated. Therefore its second alternative holds, giving a vertex of type~(2).

    Suppose instead that $C_G(x)=A$. If $A$ is not a direct CC-torus, then Lemma~\ref{lemma:non-direct-cc} places $|x^G|$ either in $\cU$ or in one of the two-vertex components, both of which are impossible. Hence $A$ is a direct CC-torus. Since $x\in A\setminus Z(G)$, we have $A>Z(G)$, and since $A=C_G(x)$, the group $A$ has odd order. Thus $|x^G|$ is of type~(1).
\end{proof}

\newpage
\begin{appendices}

\section{Frobenius Groups} \label{app:frobenius}

Another apparent gap in the existing literature surrounding the divisibility graph $\cD(G)$ is the treatment of Frobenius groups. In this appendix, we provide a short proof of the fact that the divisibility graph of a Frobenius group has two components.

\begin{thm}\label{thm:frob-grp-divis-graph}
    Let $G = K\rtimes H$ be a Frobenius group with kernel $K$ and complement $H$. Then
    \[
        c(D(\cs(G))) = 2.
    \]
\end{thm}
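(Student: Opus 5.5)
The plan is to classify the centralizers of noncentral elements of $G$ using the standard Frobenius facts. We have $Z(G)=1$. For $1\ne k\in K$ one has $C_G(k)\le K$, so $C_G(k)=C_K(k)$. For $1\ne h\in H$ one has $C_G(h)=C_H(h)\le H$, and every element outside $K$ is conjugate into $H$. Also $\gcd(|K|,|H|)=1$, and $|H|$ divides $|K|-1$. Hence the nontrivial class sizes split into two families. For $1\ne k\in K$ the class size is $|G|/|C_K(k)|=|H|\cdot|K:C_K(k)|$. For $h$ conjugate into $H\setminus\{1\}$ the class size is $|G|/|C_H(h)|=|K|\cdot|H:C_H(h)|$. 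Equivalently, in terms of centralizer orders, the first family has orders dividing $|K|$ and the second has orders dividing $|H|$.

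First I will show there is no edge between the two families. Suppose $|C_K(k)|$ and $|C_H(h)|$ are comparable under divisibility. Since they divide the coprime numbers $|K|$ and $|H|$, one of them must equal $1$. But $k\in C_K(k)$ and $h\in C_H(h)$, so both orders exceed $1$. The only remaining concern is equal class sizes across families, which are not distinct vertices. That would require $|C_K(k)|=|C_H(h)|$, which is impossible by the same coprimality. Hence $c(\cD(G))\ge 2$, once both families are nonempty, which they are because $K,H\ne1$.

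Next I will show each family is connected, which gives exactly $2$. For $K$-elements, the natural move is to use a fixed vertex $k_0\in Z(K)$ with $k_0\ne 1$ (nilpotency of $K$, by Thompson, gives $Z(K)\ne1$); its centralizer $C_K(k_0)=K$ is divisible by every $|C_K(k)|$, so the whole family is joined to the vertex $|H|$. For $H$-elements, I would try the same trick: if $Z(H)\ne1$, pick $z\in Z(H)$ with $z\ne1$, so $C_G(z)=H$ and every $|C_H(h)|$ divides $|H|$, connecting the family to the vertex $|K|$.

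The main obstacle is showing $Z(H)\ne1$ for a Frobenius complement. I would invoke the structure of Frobenius complements: every subgroup of order $pq$ is cyclic, and Sylow subgroups are cyclic or generalized quaternion. In particular, if $|H|$ is even, $H$ has a unique involution, which is central. If $|H|$ is odd, all Sylow subgroups are cyclic, so $H$ is metacyclic, $H=\langle a\rangle\rtimes\langle b\rangle$ with $\gcd(|a|,|b|)=1$. The pq-cyclic condition should then force $b^p$ to be central for suitable $p$, or, in the case $H$ is cyclic, $H$ is abelian. Should this fail in general, the fallback is a chain argument. Choose $h\in H$ of prime order $p$ lying in a normal cyclic subgroup of $H$, which exists for solvable complements (and $\SL_2(5)$-type complements have central involutions). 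Then show every $C_H(h')$ has order comparable, via $p$-elements commuting in cyclic Sylow structure, to a common vertex.
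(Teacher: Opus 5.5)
Your proposal follows the paper's proof essentially verbatim: the same split of class sizes into $|H|\cdot|K:C_K(k)|$ and $|K|\cdot|H:C_H(h)|$, coprimality of $|K|$ and $|H|$ to rule out edges between the two families, and a nonidentity element of $Z(K)$ and of $Z(H)$ as hubs; the paper simply quotes $Z(H)\ne 1$ as a standard property of Frobenius complements rather than proving it. In your odd-order sketch of that fact, the element forced to be central is $b^{|b|/p}$, the element of order $p$ in $\langle b\rangle$, not $b^p$ (which need not be central): if $b^{|b|/p}$ acted nontrivially on $\langle a\rangle$, it would act nontrivially on a subgroup of prime order $q$ there and generate a nonabelian group of order $pq$, so with this correction (and $H$ cyclic when $b=1$) your fallback argument is unnecessary.
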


\begin{proof}
    Set $A_K = \{[K:C_K(k)] : k\in K^\#\}$ and $A_H = \{[H:C_H(h)] : h\in H^\#\}$. Let $\cD^*(A_H)$ denote the divisibility graph on $A_H$, without excluding the vertex $1$ if $1\in A_H$.

    Recall the following facts about Frobenius groups: for all $h\in H^\#$, $C_K(h)=1$, equivalently $C_H(k)=1$ for all $k\in K^\#$; every element of $G\setminus K$ is conjugate to an element of $H^\#$; the kernel $K$ is nilpotent; $\gcd(|K|,|H|)=1$; and the complement $H$ has nontrivial center.

    First we compute the class sizes. Let $k\in K^\#$. We claim that $C_G(k)=C_K(k)$. Clearly $C_K(k)\le C_G(k)$. Conversely, if $x\in C_G(k)$ and $x\notin K$, then $x$ is conjugate to some $h\in H^\#$. Hence $h$ centralizes the nonidentity element $k^g\in K^\#$ for some $g\in G$, contradicting $C_H(k^g)=1$. Thus $x\in K$, and so $C_G(k)=C_K(k)$.

    Similarly, for $h\in H^\#$, we claim that $C_G(h)=C_H(h)$. The inclusion $C_H(h)\le C_G(h)$ is clear. If $x\in C_G(h)$, write $x=ab$ with $a\in K$ and $b\in H$. Passing to $G/K\cong H$, we get $b\in C_H(h)$. Therefore $a=xb^{-1}\in C_G(h)\cap K=C_K(h)=1$, so $x=b\in C_H(h)$.

    Therefore
    \[
        |k^G|=|G:C_G(k)|=|G:C_K(k)|=|H||K:C_K(k)|
    \]
    for $k\in K^\#$, and
    \[
        |h^G|=|G:C_G(h)|=|G:C_H(h)|=|K||H:C_H(h)|
    \]
    for $h\in H^\#$. Hence
    \[
        \cs(G)\setminus\{1\}=|H|A_K\cup |K|A_H.
    \]

    This union is disjoint, and there are no divisibility edges between the two parts. Indeed, take $ a=[K:C_K(k)]\in A_K $ and $b=[H:C_H(h)]\in A_H$. Then $a\mid |K|$, $b\mid |H|$, and $\gcd(|K|,|H|)=1$. Also $a<|K|$ and $b<|H|$, since $k\in C_K(k)$ and $h\in C_H(h)$ are nonidentity. If $|H|a\mid |K|b$, then $|H|\mid b$, a contradiction. Similarly, if $|K|b\mid |H|a$, then $|K|\mid a$, a contradiction.

    Now the induced subgraph on $|H|A_K$ is connected. Since $K$ is nilpotent, $Z(K)\ne 1$. Choose $1\ne z\in Z(K)$. Then $C_K(z)=K$, so $1=[K:C_K(z)]\in A_K$. Hence $|H|\in |H|A_K$, and for every $a\in A_K$ we have
    \[
        |H|\mid |H|a.
    \]
    Thus every vertex in $|H|A_K$ is adjacent to $|H|$.

    The induced subgraph on $|K|A_H$ is naturally isomorphic to $\cD^*(A_H)$, since $a\mid b$ if and only if  $\left(|K|a\right)\mid \left(|K|b\right)$. Finally, since the Frobenius complement $H$ has nontrivial center, choose $1\ne h\in Z(H)$. Then $C_H(h)=H$, so $1\in A_H$. Thus $1$ is adjacent to every other vertex of $\cD^*(A_H)$, and $\cD^*(A_H)$ is connected.

    Therefore $D(\cs(G))$ has exactly two connected components: one on $|H|A_K$ and one on $|K|A_H$.
\end{proof}

It is likely that similar results may hold of $2$-Frobenius groups or quasi-Frobenius groups. As this subject was not the primary focus of this paper, we leave these as questions to the reader.

\begin{question}
    Let $G$ be a $2$-Frobenius group. What is the number of components of $\cD(G)$?
\end{question}

\begin{question}
    Let $G$ be a quasi-Frobenius group, that is $G/Z(G)$ is a Frobenius group. What is the number of components of $\cD(G)$?
\end{question}

\end{appendices}

\section*{Acknowledgments}
The work was supported by the NSF under grant DMS-2447229 as well as Jane Street. The authors gratefully acknowledge the financial support of NSF and Jane Street, and thank Texas State University for providing a great working environment and support.
Yong Yang was also partially supported by a grant from the Simons Foundation (\#918096, to YY). Liam May was also partially supported by the MIT Department of Mathematics.

\section*{Disclosure Statement}
The authors declare that they have no competing interests and no conflicts of interest.

\section*{Data Availability Statement} Data sharing is not applicable to this article, as no data sets were generated or analysed during the current study.

\newpage
\bibliographystyle{amsalpha}
\bibliography{Bibliography}

\end{document}